\providecommand{\U}[1]{\protect\rule{.1in}{.1in}}
\newtheorem{theorem}{Theorem}
\newtheorem{lemma}[theorem]{Lemma}
\newtheorem{proposition}[theorem]{Proposition}
\begin{document}

\title{Optimal control of anthracnose using mixed strategies}

%

\author{
\authorblockN{David Jaures FOTSA MBOGNE}
\authorblockA{
Department of Mathematics and Computer Science\\
ENSAI, The University of Ngaoundere\\
Email: mjdavidfotsa@gmail.com
}
\and\authorblockN{Christopher THRON}
\authorblockA{ \\
Texas A\&M University, Central Texas\\
Email:thron@ct.tamus.edu
}
}%
%

\maketitle
%

\begin{abstract}%
In this paper we propose and study a spatial diffusion model for the control
of anthracnose disease in a bounded domain. The model is a generalization of
the one previously developed in \cite{fotsa}. We use the model to simulate two
different types of control strategies against anthracnose disease. Strategies
that employ chemical fungicides are modeled using a continuous control
function; while strategies that rely on cultivational practices (such as
pruning and removal of mummified fruits) are modeled with a control function
which is discrete in time (though not in space). Under weak smoothness
conditions on parameters we demonstrate the well-posedness of the model by
verifying existence and uniqueness of the solution for given initial
conditions. We also show that the set $\left[  0,1\right]  $ is positively
invariant. We first study control by pulse strategy only, then analyze the
simultaneous use of continuous and pulse strategies. In each case we specify a
cost functional to be minimized, and we demonstrate the existence of optimal
control strategies that can be evaluated numerically using the gradient method
presented in \cite{anita}. We discuss the results of numerical simulations
both for a spatially-averaged version of the model and for the full model.

\textbf{KeyWords--- }Anthracnose modelling, nonlinear systems, impulsive PDE,
optimal control.

\textbf{AMS Classification--- }49J20, 49J15, 92D30, 92D40.%

\end{abstract}%

\section{Introduction}

Anthracnose is a phytopathology which attacks several commercial tropical
crops such as coffee. The Anthracnose of coffee is known under the name coffee
berry disease (CBD) and its pathogen is the \textit{Colletotrichum}
\textit{kahawae}, an ascomycete fungus. The literature on Anthracnose
pathosystem is extensive
\cite{bieysse,boisson,chen,jeffries,mouen09,muller70,wharton}. There have been
several attempts to model the spread of CBD and to identify efficient control
strategies
\cite{danneberger,dodd,duthie,mouen07,mouen09,mouen072,mouen03,mouen08,wharton}%
. Possible control methods include genetic methods
\cite{bella,bieysse,boisson,ganesh,silva}, biological control \cite{durand},
chemical control \cite{boisson,muller67,muller71} and cultivational practices
\cite{boisson,mouen072,mouen03,mouen08,wharton}. Chemical methods appear to be
the most effective, but present ecological risks. Moreover, inadequate
application of chemical treatments can induce resistance in the pathogen
\cite{ramos}.

A dynamical spatial model of anthracnose infection that includes chemical
control was proposed and analysed in \cite{fotsa}; this paper also showed how
to optimize the use of the chemical control with respect to a given cost
functional. The disease dynamics were represented by an inhibition rate that
satisfies a reaction-diffusion partial differential equation with coefficients
that depend on space and time. The present paper adds to the above model the
possibility of a pulse control strategy that represents cultivational
practices such as pruning old, infected twigs and removing mummified fruits.
Such actions are commonly performed at discrete times at regular intervals. An
additional enhancement to the model results from our relaxing the regularity
conditions on the model parameters that were imposed in \cite{fotsa}. The
enhanced model is able to take into account the fact that in realistic
situations the application of antifungal compounds is typically not continuous
in time (although the action of these compounds once applied is continuous).

The remainder of the paper has the following structure. In section
\ref{System}, we present the system model and explain the significance of the  model parameters. In
section \ref{Wellposedness} we establish the well-posedness of the model
$\left(  \ref{ModelPulse1}\right)  -\left(  \ref{ModelPulse4}\right)  $ (under
certain conditions) and its spatially-averaged version. Section \ref{SectionCOV}
proves existence of an optimal control strategy based only on the pulse
strategy, for both the spatially-averaged model and the general model. Some properties of the optimal control strategy are proven, and an algorithm for finding the optimal pulse
strategy is given, which applies both spatially-averaged and general models.  Section
\ref{SectionCOUV}proves the existence of an  optimal control strategy using simultaneously pulse and continuous strategies, for both the spatially-averaged and the general model. Some extremal properties of these strategies are also established.
In Section~\ref{SectionModSim} we present system simulations of both the spatially-averaged and general models that demonstrate properties of the optimal pulse-only control. 
 Finally, in section \ref{Discussion} we summarize our conclusions.

\section{System model\label{System}}

The model of anthracnose infection discussed in \cite{fotsa} expressed the
disease dynamics in terms of an inhibition rate $\theta$ that satisfies the
following equations:%

\begin{align}
&  \partial_{t}\theta=\alpha\left(  t,x,\theta\right)  \left(  1-\theta
/\left(  1-\sigma u\left(  t,x\right)  \right)  \right)  +\operatorname{div}%
\left(  A\left(  t,x,\theta\right)  \nabla\theta\right) \nonumber\\
&  \qquad\quad\text{ on } (t,x) \in\mathbb{R}_{+}^{\ast}\times\Omega
;\label{ModelOrig1}\\
&  \left\langle A\left(  t,x,\theta\right)  \nabla\theta\left(  t,x\right)
,n\left(  x\right)  \right\rangle =0,\text{ on } \mathbb{R}_{+}^{\ast}%
\times\partial\Omega;\label{ModelOrig2}\\
&  \theta\left(  0,x\right)  =\rho\left(  x\right)  , x\in\overline{\Omega
}\subseteq\mathbb{R}^{3}, \label{ModelOrig3}%
\end{align}
where

$\alpha(t,x,\theta)$ is a positive real-valued function defined on $(t,x)
\in\mathbb{R}_{+}^{\ast}\times\Omega$;

$\sigma$ is a real parameter satisfying $0 \leq\sigma\leq1$;

$u(x,t)$ is a real-valued function with $0 \le u(x,t) \le1$, defined on the
same domain as $\alpha$;

$A(t,x,\theta)$ is a $3 \times3$ matrix function which is positive definite
for all $t,x,\theta$, defined on the same domain as $\alpha$;

$\Omega$ is an open, bounded subset of $\mathbb{R}^{3}$;

$\partial\Omega$ is the boundary of $\Omega$, which is assumed to satisfy
$\partial\Omega\in H^{1}\left(  \mathbb{R}^{2};\mathbb{R}\right)  $;

$n\left(  x\right)  $ denotes the normal vector to the boundary at $x
\in\partial\Omega$;

$\rho(x)$ is a real-valued function satisfying $0 \leq\rho(x) \leq1$ for $x
\in\overline{\Omega}$.

The various terms in the model equations $(\ref{ModelOrig1})
-(\ref{ModelOrig3})$ have practical interpretations as follows. (See reference
\cite{fotsa} for a more detailed description.) The function $\alpha$
represents the inhibition pressure, which depends on climatic and
environmental conditions \cite{danneberger,dodd,duthie}. It is appropriate to
model $\alpha$ as an almost-periodic function, taking into account yearly
seasonal changes. The term $\operatorname{div}\left(  A\nabla\theta\right)  $
accounts for the (possibly anisotropic) diffusive spatial spreading of
inhibition rate in the open domain $\Omega\subset\mathbb{R}^{3}$, where the
matrix $A$ contains the space- and time-dependent diffusion coefficients. The
boundary condition $\left\langle A\nabla\theta,n\right\rangle =0$ guarantees
that there is no net flux of inhibition rate between $\Omega$ and its
exterior. The function $u(x,t)$ expresses the influence of chemical control
(that is, the application of fungicides) on the inhibition rate. $1-\sigma$ is
the inhibition rate corresponding to epidermis penetration. Once the epidermis
has been penetrated, the inhibition rate cannot fall below this value, even
under maximum control effort.

In the current paper, we propose the following modified model, that includes
the possibility of impulsive control:
\begin{align}
&  \partial_{t}\theta=\alpha\left(  t,x\right)  \left(  1-\theta/\left(
1-\sigma u\left(  t,x\right)  \right)  \right)  +\operatorname{div}\left(
A\left(  t,x\right)  \nabla\theta\right)  ,\nonumber\\
&  \qquad\quad(t,x)\in\left(  \mathbb{R}_{+}^{\ast}\setminus\left\{  \tau
_{i}\right\}  _{i\in\mathbb{N}}\right)  \times\Omega;\label{ModelPulse1}\\
&  \theta\left(  \tau_{i}^{+},x\right)  =v_{i}\left(  x\right)  \theta\left(
\tau_{i},x\right)  ,\text{ }i\in\mathbb{N}^{\ast},x\in\Omega
;\label{ModelPulse2}\\
&  \left\langle A\left(  t,x\right)  \nabla\theta\left(  t,x\right)  ,n\left(
x\right)  \right\rangle =0,\nonumber\\
&  \qquad\quad(t,x)\in\left(  \mathbb{R}_{+}^{\ast}\setminus\left\{  \tau
_{i}\right\}  _{i\in\mathbb{N}}\right)  \times\partial\Omega
;\label{ModelPulse3}\\
&  \theta\left(  0,x\right)  =\rho\left(  x\right)  \geq0,x\in\overline
{\Omega}\subseteq\mathbb{R}^{3}, \label{ModelPulse4}%
\end{align}
where $\alpha$, $u$, $\sigma$, $A$, $\Omega$, $n\left(  x\right)  $, and
$\rho(x)$ are as above (except that $\alpha$ and $A$ no longer depend on
$\theta$) and

$\left(  t_{k}\right)  _{k\in\mathbb{N}}$ is an increasing sequence of
nonnegative reals such that $\underset{k\rightarrow\infty}{\lim}t_{k}=\infty$;

$\tau_{0} =t_{0}=0$;

$\tau_{i}=\inf\left\{  t_{k}>\tau_{i-1};k>0\text{ and }\left\Vert
\theta\left(  t_{k},.\right)  \right\Vert _{L^{2}\left(  \Omega\right)  }%
\geq\sigma^{\ast}\left\vert \Omega\right\vert \right\}  $, where $\left\vert
\Omega\right\vert $ denotes the volume $%
{\displaystyle\int\nolimits_{\Omega}}
dx$;

$\sigma^{\ast}\in\mathbb{R}^{+}$ is a threshold value such that the inhibition
rate is not measurable under it and is observable for values greater than
$\sigma^{\ast}$;

$v_{i}(x),i=1,2,3,\ldots$ is a $[0,1]$-valued function defined on $x\in\Omega$.

In $(\ref{ModelPulse1})-(\ref{ModelPulse4})$, the inhibition rate $\theta$ is
assumed to be left continuous with respect to time: $\theta\left(  t,.\right)
=\theta\left(  t^{-},x\right)  $, where $\theta\left(  t^{-},x\right)  $
denotes $\underset{s\rightarrow t,s<t}{\lim}\theta\left(  s,x\right)  $. (We
shall also use the notation $\theta\left(  t^{+},x\right)  $ to denote
$\underset{s\rightarrow t,s>t}{\lim}\theta\left(  s,x\right)  . $)

Cultivational practices are included in the model $(\ref{ModelPulse1}%
)-(\ref{ModelPulse4})$ as follows. The sequence $\left(  t_{k}\right)
_{k\in\mathbb{N}}$ represents times at which cultivational interventions are
possible. For example, in coffee cultivation it would be reasonable to take
the $t_{k}^{\prime}s$ as regularly spaced with an interval of one week. The
intervals between intervention times reflect the fact that continuous exertion
of cultivational interventions such as pruning is neither practical nor
efficient. At each potential intervention time $t_{k}$, intervention only
takes place if the infection is sufficiently serious, as determined by the
threshold condition $\left\Vert \theta(t_{k},.)\right\Vert _{H^{1}(\Omega
)}\geq\sigma^{\ast}$. The degree of intervention at time $\tau_{i}$ at each
point $x\in\Omega$ is given by $v_{i}(x)$: $v_{i}(x)=1$ corresponds to no
change in the inhibition rate at $x$, while $v_{i}(x)=0$ reduces the
inhibition rate $\theta(x)$ to $0$.

\section{Well-posedness of the model\label{Wellposedness}}

In this section we verify that the model $(\ref{ModelPulse1}%
)-(\ref{ModelPulse4})$ is well-posed (under certain conditions) and that the
inhibition rate $\theta(t,x)$ always remains between $0$ and $1$. We first
establish well-posedness of a spatially-averaged version of the
model, and then use similar techniques to prove well-posedness of the
spatially-dependent model.

\subsection{Well-posedness of spatially-averaged model\label{SubsectionIDEModel}}

A spatially-averaged version of the model $(\ref{ModelPulse1})-(\ref{ModelPulse4})$
may be obtained by taking spatial averages of the equations. The averaged
model is much simpler to work with than the general model; however, the same
tools used to prove well-posedness for the averaged model can be generalized
to apply to the general model.

Define
\begin{equation}
\Theta\left(  t\right)  \equiv\frac{1}{\left\vert \Omega\right\vert
}{\displaystyle\int\nolimits_{\Omega}}\theta\left(  t,x\right)  dx.
\end{equation}
If we suppose that $\alpha,$ $u$ and $v_{i}$ are functions of $t$ only and not
of $x$, then $\Theta(t)$ satisfies the following impulsive differential
equation:
\begin{align}
&  d_{t}\Theta=\alpha\left(  t\right)  \left(  1-\Theta/\left(  1-\sigma
u\left(  t\right)  \right)  \right)  ,\text{ on }\mathbb{R}_{+}^{\ast
}\setminus\left(  \tau_{i}\right)  _{i\in\mathbb{N}};\label{ModelPulse1IDE}\\
&  \Theta\left(  \tau_{i}^{+}\right)  =v_{i}\Theta\left(  \tau_{i}\right)
,~i\in\mathbb{N}^{\ast},\label{ModelPulse2IDE}\\
&  \Theta\left(  0\right)  =\Theta_{0}\in\left[  0,1\right]  ,
\label{ModelPulse3IDE}%
\end{align}
where
\begin{align}
&  \tau_{0}=t_{0}=0;\\
&  \tau_{i}=\inf\left\{  t_{k}>\tau_{i-1};k>0,\text{ }\Theta\left(  t_{k}%
\right)  \geq\sigma^{\ast}\right\}  . \label{ModelPulse2TIDE}%
\end{align}
Note that the divergence term in (\ref{ModelPulse1}) vanishes in the
averaged model, due to the no-flux boundary conditions (\ref{ModelPulse3}).

A solution of $\left(  \ref{ModelPulse1IDE}\right)  -\left(
\ref{ModelPulse3IDE}\right)  $ is a piecewise absolutely continuous
real-valued function that satisfies the following equation in each interval
$\left]  \tau_{k},\tau_{k+1}\right]  $:%
\begin{equation}
\Theta\left(  t\right)  =\Theta\left(  \tau_{k}^{+}\right)
+{\displaystyle\int\nolimits_{\tau_{k}}^{t}} \alpha\left(  s\right)  \left(
1-\Theta\left(  s\right)  /\left(  1-\sigma u\left(  s\right)  \right)
\right)  ds. \label{ThetaIntegral}%
\end{equation}
We impose the following conditions on $\alpha$ and $u$ to ensure the
solution's existence and uniqueness:

\textbf{(H1): }$~~\alpha\in L_{loc}^{\infty}\left(
\mathbb{R}
_{+};%
\mathbb{R}
_{+}\right)  $.

\textbf{(H2): }$~~u\in L^{\infty}\left(
\mathbb{R}
_{+};\left[  0,1\right]  \right)  $.

\begin{proposition}
\label{boundednessIDE}If $\Theta$ is a maximal solution of $\left(
\ref{ModelPulse1IDE}\right)  -\left(  \ref{ModelPulse3IDE}\right)  $, then
$\Theta$ is $\left[  0,1\right]  -$valued.
\end{proposition}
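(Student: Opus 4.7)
The plan is to prove the invariance of $[0,1]$ by induction on the jump index $k$, treating separately the continuous evolution on each interval $(\tau_k,\tau_{k+1}]$ and the impulsive jumps at the $\tau_i$. The impulsive part is immediate: since $v_i\in[0,1]$, the map $\Theta(\tau_i)\mapsto v_i\Theta(\tau_i)$ sends $[0,1]$ into itself (in fact into $[0,\Theta(\tau_i)]$). So the entire burden lies on showing that if $\Theta(\tau_k^+)\in[0,1]$, then the solution of (\ref{ModelPulse1IDE}) stays in $[0,1]$ on $(\tau_k,\tau_{k+1}]$, whence the base case $\Theta_0\in[0,1]$ and induction finish the argument.

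For the continuous part, I would rewrite (\ref{ModelPulse1IDE}) as the linear inhomogeneous equation
\begin{equation*}
d_t\Theta + \frac{\alpha(t)}{1-\sigma u(t)}\Theta \;=\; \alpha(t),
\end{equation*}
and use the integrating factor $\mu(t)=\exp\!\bigl(\int_{\tau_k}^{t}\alpha(s)/(1-\sigma u(s))\,ds\bigr)$, which is well-defined and locally absolutely continuous under \textbf{(H1)}--\textbf{(H2)} provided $1-\sigma u(\cdot)$ is bounded away from $0$. This yields the explicit representation
\begin{equation*}
\Theta(t) \;=\; \mu(t)^{-1}\Theta(\tau_k^+) \;+\; \mu(t)^{-1}\!\int_{\tau_k}^{t}\mu(s)\alpha(s)\,ds .
\end{equation*}
Non-negativity of $\Theta$ follows at once, since $\alpha\geq 0$, $\mu>0$, and $\Theta(\tau_k^+)\geq 0$ by the inductive hypothesis. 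For the upper bound I would use the key observation $\mu(s)\alpha(s)=(1-\sigma u(s))\,\mu'(s)\leq \mu'(s)$, which upon integration gives $\int_{\tau_k}^{t}\mu(s)\alpha(s)\,ds\leq \mu(t)-1$, and therefore
\begin{equation*}
\Theta(t) \;\leq\; \mu(t)^{-1}\Theta(\tau_k^+)+1-\mu(t)^{-1} \;\leq\; 1,
\end{equation*}
since $\mu(t)\geq 1$ and $\Theta(\tau_k^+)\leq 1$.

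The main obstacle is the potential degeneracy of the integrating factor when $1-\sigma u(t)$ approaches $0$, which could occur if $\sigma=1$ and $u=1$ on a set of positive measure. I would handle this either by assuming implicitly that $\sigma<1$ (which is natural in the model, since $1-\sigma$ represents the residual inhibition at epidermis penetration), or by a regularization argument: replacing $u$ with $u_\varepsilon:=\min(u,(1-\varepsilon)/\sigma)$, applying the above bound to the approximating solutions $\Theta_\varepsilon$, and passing to the limit using continuous dependence on the coefficients. An alternative way around this difficulty is a direct sub-/super-solution argument: $\underline\Theta\equiv 0$ satisfies $d_t\underline\Theta=0\leq \alpha(t)$, and $\overline\Theta\equiv 1$ satisfies $d_t\overline\Theta=0\geq -\alpha(t)\sigma u(t)/(1-\sigma u(t))$, so by a Gronwall-type comparison on each inter-jump interval $\underline\Theta\leq\Theta\leq\overline\Theta$; but this still needs the right-hand side to make sense, so the integrating-factor route together with the assumption $\sigma u<1$ a.e.\ appears to be the cleanest.
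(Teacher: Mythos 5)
Your proof is correct, but it takes a genuinely different route from the paper's. You reduce to a single inter-jump interval exactly as the paper does (the jumps are trivial since $v_i\in[0,1]$), but on each interval you solve the linear equation explicitly with the integrating factor $\mu(t)=\exp\bigl(\int_{\tau_k}^{t}\alpha/(1-\sigma u)\bigr)$ and read off both bounds from the closed-form representation, the upper bound via the neat inequality $\mu\alpha=(1-\sigma u)\mu'\le\mu'$. The paper instead argues by contradiction: it sets $f=\max\{0,-\Theta\}$ and $g=\max\{0,\Theta-1\}$, takes a connected component $U'$ of the open set where $f>0$, and derives $f\le 0$ on $U'$ directly from the integral form of the equation. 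Your approach is more constructive and arguably cleaner for this scalar ODE, and it has the virtue of making the upper bound an explicit convex-combination estimate $\Theta(t)\le\mu(t)^{-1}\Theta(\tau_k^+)+1-\mu(t)^{-1}$. What the paper's less explicit argument buys is that it transfers verbatim to the spatially-dependent model: Proposition~\ref{boundednessIPDE} runs the same $f,g$ construction with an $L^2$ energy estimate replacing the pointwise integral identity, whereas an integrating factor is not available once the diffusion term is present. Your caveat about the degeneracy of $1-\sigma u$ is well taken but is not specific to your method --- the paper's own proofs (the Lipschitz and Gronwall estimates in Proposition~\ref{ExistenceGlobalSolutionIDE}, and the $f^2/(1-\sigma u)$ term in Proposition~\ref{boundednessIPDE}) likewise tacitly require $1-\sigma u$ bounded away from zero, so you are merely making explicit an assumption the paper leaves implicit.
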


\begin{proof}
Let $\Theta$ be a solution of $\left(  \ref{ModelPulse1IDE}\right)  -\left(
\ref{ModelPulse3IDE}\right)  $. Since $v_{i}\in\left[  0,1\right]  $ it
suffices to establish that the restriction of $\Theta$ on $\left]  t_{0}%
,t_{1}\right]  $ is $\left[  0,1\right]  -$valued. Let $f=\max\left\{
0,-\Theta\right\}  $ and $g=\max\left\{  0,\Theta-1\right\}  $.

We first prove that $\Theta\geq0$. Let $U\subset\left]  t_{0},t_{1}\right]  $
be the set where $f$ is positive. Since $f$ is continuous, it follows that $U$
is open in $\left]  t_{0},t_{1}\right]  $. Suppose that $U$ is nonempty; then
$U$ is the disjoint union of open subintervals of $\left]  t_{0},t_{1}\right]
$. Let $U^{\prime}\neq\emptyset$ be one of these intervals. Then from
(\ref{ThetaIntegral}) and the definition of $f$, for $t\in U^{\prime}$ we
have
\begin{align*}
f\left(  t\right)   &  =%
{\displaystyle\int\nolimits_{\inf U^{\prime}}^{t}}
-\alpha\left(  s\right)  \left(  1+f\left(  s\right)  /\left(  1-\sigma
u\left(  s\right)  \right)  \right)  ds\\
&  \leq0.
\end{align*}
But we know $f(t) > 0$, since $t \in U^{\prime}$; this contradiction implies
that $U$ is empty. It follows that $f=0$ on $\left]  t_{0},t_{1}\right]  $,
which implies that $\Theta\ge0$ on $\left]  t_{0},t_{1}\right]  $.

To prove that $\Theta\leq1$ on $\left]  t_{0},t_{1}\right]  $, we may use an
almost identical argument, with $g(x)$ replacing $f(x)$.
\end{proof}

\begin{proposition}
\label{ExistenceGlobalSolutionIDE}The problem $\left(  \ref{ModelPulse1IDE}%
\right)  -\left(  \ref{ModelPulse3IDE}\right)  $ has a unique global solution.
\end{proposition}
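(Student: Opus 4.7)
The plan is to build the unique global solution interval-by-interval, handling one inter-impulse interval $\left]\tau_k,\tau_{k+1}\right]$ at a time and then concatenating. On each such interval the problem reduces to a classical Cauchy problem for the integral equation $(\ref{ThetaIntegral})$ with initial data $\Theta(\tau_k^+)$, so the core of the argument is a standard Picard/Banach fixed-point application, with the boundedness already supplied by Proposition~\ref{boundednessIDE} used to rule out finite-time blow-up.

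For local existence and uniqueness on $\left]\tau_k,\tau_{k+1}\right]$, I would introduce the map
\begin{equation*}
(\mathcal{T}\Theta)(t) = \Theta(\tau_k^+) + \int_{\tau_k}^{t}\alpha(s)\left(1-\frac{\Theta(s)}{1-\sigma u(s)}\right)ds
\end{equation*}
acting on the Banach space $C(\left[\tau_k,\tau_k+h\right];\mathbb{R})$ with the sup norm. Because the right-hand side of $(\ref{ModelPulse1IDE})$ is affine in $\Theta$, it is Lipschitz with constant $L(s)=\alpha(s)/(1-\sigma u(s))$, and under hypotheses \textbf{(H1)}--\textbf{(H2)} this Lipschitz constant lies in $L^\infty_{loc}$ (implicitly assuming $\sigma u$ stays bounded away from $1$, which is the only delicate point here). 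A short interval of length $h$ with $\|L\|_{L^\infty}\,h<1$ makes $\mathcal{T}$ a contraction, giving a unique fixed point; one then reapplies the contraction from the endpoint to cover all of $\left]\tau_k,\tau_{k+1}\right]$ since the bound from Proposition~\ref{boundednessIDE} precludes blow-up, and uniqueness on the full interval follows by a standard Gronwall argument on two candidate solutions.

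To pass from local to global, I would iterate over $k$. Given the solution on $\left]\tau_{k-1},\tau_k\right]$, the impulse rule $(\ref{ModelPulse2IDE})$ unambiguously defines $\Theta(\tau_k^+)=v_k\Theta(\tau_k)$, which still lies in $[0,1]$ by Proposition~\ref{boundednessIDE} and because $v_k\in[0,1]$, providing a legitimate initial datum for the next interval. The impulse times $\tau_i$ form a (possibly finite) subsequence of $(t_k)$, and since $t_k\to\infty$ the sequence $\tau_i$ cannot accumulate at a finite point, so the inter-impulse intervals tile $\mathbb{R}_+$ and the concatenated function is defined on all of $\mathbb{R}_+$.

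The main obstacle, as I see it, is not the Picard contraction itself but the implicit regularity of $1/(1-\sigma u)$: if $\sigma u$ is allowed to approach $1$ on a set of positive measure the vector field becomes singular and the Lipschitz constant leaves $L^\infty_{loc}$. I would therefore state (or verify from the modelling context) that $\sigma<1$ or $\mathrm{ess\,sup}\,u<1/\sigma$, so that $L(\cdot)\in L^\infty_{loc}(\mathbb{R}_+)$ and the contraction argument goes through uniformly on each compact time window; then the concatenation is unique and global, which is exactly the claim.
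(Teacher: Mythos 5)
Your proposal is correct, and it reaches the conclusion by a genuinely different local-existence mechanism than the paper. The paper treats the right-hand side $\alpha(t)\left(1-x/\left(1-\sigma u(t)\right)\right)$ as a Carath\'{e}odory vector field (measurable in $t$, Lipschitz in $x$, dominated by an integrable function), invokes the Carath\'{e}odory existence theorem for a local $[0,1]$-valued solution, uses Proposition~\ref{boundednessIDE} together with a continuation theorem to extend it to each full inter-impulse interval, and then proves uniqueness separately by Gronwall. You instead run a Picard/Banach contraction for $\mathcal{T}$ on $C([\tau_k,\tau_k+h])$, which is legitimate here precisely because the field is affine in $\Theta$ with Lipschitz "constant" $L(s)=\alpha(s)/(1-\sigma u(s))\in L^{\infty}_{loc}$; this buys you existence and uniqueness in one stroke and is more self-contained, at the price of needing to re-seed the contraction finitely many times to cover $\left]\tau_k,\tau_{k+1}\right]$ (note that because the equation is affine in $\Theta$, no blow-up is possible on an interval where $L\in L^1$, so your appeal to Proposition~\ref{boundednessIDE} for this purpose is harmless but not actually needed — whereas the paper genuinely needs it to continue the Carath\'{e}odory local solution). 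Your concatenation over impulse times, using $t_k\to\infty$ to preclude accumulation, matches the paper's implicit structure. Finally, the delicate point you flag — that $1-\sigma u$ must stay bounded away from $0$ for $L$ to lie in $L^{\infty}_{loc}$ — is a real issue, and it is one the paper's own proof also glosses over (its claim that the field is Lipschitz in $x$ and dominated by $\alpha$ likewise presupposes $\operatorname{ess\,inf}(1-\sigma u)>0$, e.g.\ $\sigma<1$); making that hypothesis explicit, as you do, is an improvement rather than a defect.
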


\begin{proof}
For existence, it suffices to establish existence of a local solution and use
Proposition \ref{boundednessIDE} to conclude the result based on Theorem 5.7
of \cite{benzoni}. It also suffices to restrict ourselves to the set $\left[
t_{0}^{+},t_{1}\right]  $. The function $\left(  t,x\right)  \in\left[
t_{0}^{+},t_{1}\right]  \times\left[  0,1\right]  \mapsto\alpha\left(
t\right)  \left(  1-x/\left(  1-\sigma u\left(  t\right)  \right)  \right)  $
is integrable with respect to $t$, Lipschitz continuous with respect to $x$,
and upper bounded by $\alpha$ which is also integrable with respect to $t$.
Then by the Carath\'{e}odory theorem (see \cite{coddington}) there is a local
$\left[  0,1\right]  -$valued solution.

We now prove uniqueness of the solution. Given that $x$ and $y$ are solutions
on $\left[  t_{0}^{+},t_{1}\right]  $, we have that
\begin{align*}
\left\vert x\left(  t\right)  -y\left(  t\right)  \right\vert  &
\leq\left\vert x\left(  t_{0}^{+}\right)  -y\left(  t_{0}^{+}\right)
\right\vert \\
&  +%
{\displaystyle\int\nolimits_{t_{0}}^{t}}
\alpha\left(  s\right)  \left\vert y\left(  s\right)  -x\left(  s\right)
\right\vert /\left(  1-\sigma u\left(  s\right)  \right)  ds.
\end{align*}
Using the Gronwall lemma, we get
\begin{align*}
\left\vert x\left(  t\right)  -y\left(  t\right)  \right\vert  &
\leq\left\vert x\left(  t_{0}^{+}\right)  -y\left(  t_{0}^{+}\right)
\right\vert \\
&  \times\exp\left(
{\displaystyle\int\nolimits_{t_{0}}^{t}}
\alpha\left(  s\right)  /\left(  1-\sigma u\left(  s\right)  \right)
ds\right)
\end{align*}
and more generally if $t\in$ $\left[  t_{k}^{+},t_{k+1}\right]  $
\begin{align}
\left\vert x\left(  t\right)  -y\left(  t\right)  \right\vert  &  \leq\left(
{\displaystyle\prod\nolimits_{i=0}^{k}}
v_{i}\right)  \left\vert x\left(  t_{0}\right)  -y\left(  t_{0}\right)
\right\vert \nonumber\\
&  \times\exp\left(
{\displaystyle\int\nolimits_{t_{0}}^{t}}
\alpha\left(  s\right)  /\left(  1-\sigma u\left(  s\right)  \right)
ds\right)  \label{contin}%
\end{align}
It follows that the solution is unique and depends continuously on initial conditions.
\end{proof}

It is important to notice that (\ref{contin}) implies that the solution of
$\left(  \ref{ModelPulse1IDE}\right)  -\left(  \ref{ModelPulse3IDE}\right)  $
is continuous with respect to control strategies $u$ and $\left(
v_{i}\right)  $.

\subsection{Well-posedness of the general model}

A solution $\theta$ of $\left(  \ref{ModelPulse1}\right)  -\left(
\ref{ModelPulse4}\right)  $ is a piecewise absolutely continuous function of
time, such that $\forall t\geq0$ the function $\theta\left(  t,.\right)  \in
H^{2}(\Omega;\mathbb{R})$ and satisfies
\begin{align}
\theta\left(  t,x\right)   &  =\theta\left(  \tau_{k}^{+},x\right) \nonumber\\
&  +{\displaystyle\int\nolimits_{\tau_{k}}^{t}}\alpha\left(  s,x\right)
\left(  1-\theta\left(  s,x\right)  /\left(  1-\sigma u\left(  s,x\right)
\right)  \right)  ds\nonumber\\
&  +{\displaystyle\int\nolimits_{\tau_{k}}^{t}}\operatorname{div}\left(
A\left(  s,x\right)  \nabla\theta\left(  s,x\right)  \right)  ds;
\label{weak_1}%
\end{align}%
\begin{align}
&  \left\langle A\left(  t,x\right)  \nabla\theta\left(  t,x\right)  ,n\left(
x\right)  \right\rangle =0\nonumber\\
&  \qquad\qquad\qquad\qquad\qquad\text{ on }\left(  \mathbb{R}_{+}^{\ast
}\setminus\left(  \tau_{i}\right)  _{i\in\mathbb{N}}\right)  \times
\partial\Omega; \label{weak2}%
\end{align}
for all $t\in]\tau_{k},\tau_{k+1}]$ and $\forall x\in\Omega$. We also have
\begin{equation}
\theta\left(  0,x\right)  =\rho\left(  x\right)  \in\left[  0,1\right]
,\text{ }\forall x\in\overline{\Omega}\subseteq\mathbb{R}^{3}. \label{weak_3}%
\end{equation}

We further define a \textit{weak solution} $\theta$ of $\left(
\ref{ModelPulse1}\right)  -\left(  \ref{ModelPulse4}\right)  $ to be a
piecewise absolutely continuous function with respect to time which satisfies
(\ref{weak_3}) and $\forall t\geq0,$ the function $\theta\left(  t,.\right)
\in H^{1}(\Omega;\mathbb{R})$ satisfies (\ref{weak2}) and the following
\textquotedblleft weak\textquotedblright\ form of (\ref{weak_1}):
\begin{align*}
&
{\displaystyle\int\nolimits_{\Omega}}
\theta\left(  t,x\right)  \psi\left(  t,x\right)  dx\\
&  =%
{\displaystyle\int\nolimits_{\Omega}}
\theta\left(  \tau_{k}^{+},x\right)  \psi\left(  \tau_{k}^{+},x\right)  dx\\
&  -%
{\displaystyle\int\nolimits_{\Omega}}
{\displaystyle\int\nolimits_{\tau_{k}}^{t}}
\left\langle A\left(  s,x\right)  \nabla\theta\left(  s,x\right)  ,\nabla
\psi\left(  s,x\right)  \right\rangle ds\\
&  +%
{\displaystyle\int\nolimits_{\Omega}}
{\displaystyle\int\nolimits_{\tau_{k}}^{t}}
\alpha\left(  s,x\right)  \psi\left(  s,x\right)  \left(  1-\theta\left(
s,x\right)  /\left(  1-\sigma u\left(  s,x\right)  \right)  \right)  ds\,dx,
\end{align*}
where $t\in]\tau_{k},\tau_{k+1}]$ and $\psi\in H^{1}\left(  \Omega;%
\mathbb{R}
\right)  $. We make the following additional assumptions in order to guarantee
existence and uniqueness of the weak solution:

\textbf{(H3): }$\alpha\in L_{loc}^{\infty}\left(
\mathbb{R}
_{+};L^{\infty}\left(  \Omega;%
\mathbb{R}
_{+}\right)  \right)  $;

\textbf{(H4): }$\forall i,j\in\left\{  1,2,3\right\}  ,$ $a_{ij}\in
L_{loc}^{\infty}\left(
\mathbb{R}
_{+};W^{1,\infty}\left(  \Omega;%
\mathbb{R}
\right)  \right)  $;

\textbf{(H5): }$\exists\delta\in%
\mathbb{R}
_{+}^{\ast}$ such that $\forall t\in%
\mathbb{R}
_{+},\forall w\in H^{1}\left(  \Omega;%
\mathbb{R}
\right)  ,$
\[%
{\displaystyle\int\nolimits_{\Omega}}
\left\langle A\left(  t,x\right)  \nabla w\left(  x\right)  ,\nabla w\left(
x\right)  \right\rangle dx\geq\delta%
{\displaystyle\int\nolimits_{\Omega}}
\left\langle \nabla w\left(  x\right)  ,\nabla w\left(  x\right)
\right\rangle dx;
\]

\textbf{(H6): }$u\in L^{\infty}\left(
\mathbb{R}
_{+};L^{\infty}\left(  \Omega;\left[  0,1\right]  \right)  \right)  $;

\textbf{(H7): }$\forall i\in%
\mathbb{N}
,v_{i}\in L^{\infty}\left(  \Omega;\left[  0,1\right]  \right)  $.

As preliminary to proving existence and uniqueness of weak solutions, we first
establish the boundedness of solutions. The proof is similar to that of
Proposition~\ref{boundednessIDE}.

\begin{proposition}
\label{boundednessIPDE}If $\theta$ is a maximal solution of $\left(
\ref{ModelPulse1}\right)  -\left(  \ref{ModelPulse4}\right)  $ then $\theta$
is $\left[  0,1\right]  $-valued.
\end{proposition}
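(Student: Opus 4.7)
The plan is to mirror Proposition~\ref{boundednessIDE} but replace the pointwise truncation argument with a Stampacchia-type energy estimate in the weak formulation. First, I would reduce to a single inter-impulse interval: thanks to the impulse rule (\ref{ModelPulse2}) and hypothesis (H7), the bounds $0\leq\theta\leq 1$ are preserved by multiplication by $v_i\in[0,1]$, so an induction on $k$ reduces the problem to showing that these bounds propagate across each interval $(\tau_k,\tau_{k+1}]$, starting from $\theta(\tau_k^+,\cdot)\in[0,1]$ almost everywhere.

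For the lower bound, I would test the weak form of (\ref{ModelPulse1}) against the negative part $\psi=\theta^-=\max\{0,-\theta\}$. Using the identity $\theta\theta^-=-(\theta^-)^2$ and the chain rule $\nabla\theta^-=-\nabla\theta\,\mathbf{1}_{\{\theta<0\}}$, the diffusion integral becomes $\int_\Omega\langle A\nabla\theta^-,\nabla\theta^-\rangle\,dx$, which by uniform ellipticity (H5) is bounded below by $\delta\|\nabla\theta^-\|_{L^2}^2$. On the set $\{\theta<0\}$ the factor $1-\theta/(1-\sigma u)=1+\theta^-/(1-\sigma u)$ is non-negative by (H2) and (H6), so the source term tested against $\theta^-$ is itself non-negative. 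Collecting the contributions should yield
\[
\tfrac{1}{2}\tfrac{d}{dt}\|\theta^-(t,\cdot)\|_{L^2}^2+\delta\|\nabla\theta^-(t,\cdot)\|_{L^2}^2\leq 0,
\]
and since $\theta^-(\tau_k^+,\cdot)=0$ by the inductive hypothesis, this forces $\theta^-\equiv 0$ throughout $(\tau_k,\tau_{k+1}]$.

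For the upper bound I would repeat the calculation with $\psi=(\theta-1)^+=\max\{0,\theta-1\}$. The key sign observation is that on $\{\theta>1\}$ one has $1-\theta/(1-\sigma u)=(1-\sigma u-\theta)/(1-\sigma u)<0$ under (H6), making the source integral non-positive. Combined with the non-negative diffusion contribution coming from the chain rule, an analogous energy inequality $\tfrac{d}{dt}\|(\theta-1)^+(t,\cdot)\|_{L^2}^2\leq 0$ emerges, and the inductive hypothesis propagates the bound $\theta\leq 1$ across the interval.

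The main technical obstacle will be the justification that $\theta^-$ and $(\theta-1)^+$ are admissible test functions in the weak formulation and that the chain-rule manipulations above are rigorous. This requires that $\theta(t,\cdot)\in H^1(\Omega)$ at almost every $t$ (which is built into the definition of a weak solution) together with Stampacchia's lemma on compositions of Lipschitz functions with $H^1$ maps. Once this regularity step is secured, the remainder of the argument reduces to sign bookkeeping plus induction on the impulse index $k$.
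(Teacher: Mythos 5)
Your proposal is correct and follows essentially the same route as the paper: both reduce to a single inter-impulse interval using $v_i\in[0,1]$, then run an energy estimate on the truncations $\max\{0,-\theta\}$ and $\max\{0,\theta-1\}$, using the sign of the reaction term $\alpha\left(1-\theta/\left(1-\sigma u\right)\right)$ on the bad set and the ellipticity of $A$ to show the $L^{2}$ norm of each truncation is non-increasing from zero. Your explicit attention to the admissibility of the truncated test functions in the weak formulation is a minor refinement of, not a departure from, the paper's argument.
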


\begin{proof}
Let $\theta$ be a solution of $\left(  \ref{ModelPulse1}\right)  -\left(
\ref{ModelPulse4}\right)  $. Since $v_{i}\in L^{\infty}\left(  \Omega;\left[
0,1\right]  \right)  $ it suffices to establish the result for the restriction
of $\theta$ on $\left]  t_{0},t_{1}\right]  $. Let $f=\max\left\{
0,-\theta\right\}  $ and $g=\max\left\{  0,\theta-1\right\}  $.

We first prove that $\theta\geq0$. Let $\mathcal{U} \subset\left]  t_{0}%
,t_{1}\right]  \times\Omega$ be the set where $f$ is positive. Let $\Omega_{t}
= (\{t\}\times\Omega) \cap\mathcal{U}$, and let $U = \{ t \,|\,\Omega_{t}
\neq\emptyset\}$. As in the proof of Proposition~\ref{boundednessIDE}, we may
choose an open subinterval $U^{\prime}\subset U$ which is a connected
component of $U$. For almost every time $t\in U^{\prime}$, the function
$f\left(  t,.\right)  \in H^{1}\left(  \Omega,\mathbb{R} \right)  $ and
\begin{align*}
&  \frac{1}{2}\partial_{t}\left\Vert f\left(  t,.\right)  \right\Vert
_{L^{2}\left(  \Omega;%
\mathbb{R}
\right)  }^{2}\\
&  =\frac{1}{2}\partial_{t}%
{\displaystyle\int\nolimits_{\Omega_{2}}}
f^{2}\left(  t,x\right)  dx\\
&  =%
{\displaystyle\int\nolimits_{\Omega}}
f\left(  t,x\right)  \partial_{t}f\left(  t,x\right)  dx\\
&  =-%
{\displaystyle\int\nolimits_{\Omega}}
\alpha\left(  t,x\right)  f\left(  t,x\right)  dx\\
&  +%
{\displaystyle\int\nolimits_{\Omega}}
\operatorname{div}\left(  A\left(  t,x\right)  \nabla f\left(  t,x\right)
\right)  f\left(  t,x\right)  dx\\
&  -%
{\displaystyle\int\nolimits_{\Omega}}
\alpha\left(  t,x\right)  f^{2}\left(  t,x\right)  /\left(  1-\sigma u\left(
t,x\right)  \right)  dx\\
&  =-%
{\displaystyle\int\nolimits_{\Omega}}
\alpha\left(  t,x\right)  f\left(  t,x\right)  dx\\
&  -%
{\displaystyle\int\nolimits_{\Omega}}
\left\langle A\left(  t,x\right)  \nabla f\left(  t,x\right)  ,\nabla f\left(
t,x\right)  \right\rangle dx\\
&  -%
{\displaystyle\int\nolimits_{\Omega}}
\alpha\left(  t,x\right)  f^{2}\left(  t,x\right)  /\left(  1-\sigma u\left(
t,x\right)  \right)  dx\\
&  \leq0.
\end{align*}
Since $\left\Vert f(\inf(U^{\prime}),.) \right\Vert _{L^{2}(\Omega
;\mathbb{R})} = 0$, it follows that $\left\Vert f(t,.) \right\Vert
_{L^{2}(\Omega;\mathbb{R})} = 0$ for all $t \in U^{\prime}$, which implies $f
\equiv0$ in $U^{\prime}\times\Omega$. It follows immediately that $f \equiv0$
in $U \times\Omega$, so that $\theta\geq0$ on $\left]  t_{0},t_{1}\right]  $.

A similar computation to the above can be used to show that $g \equiv0$ in $U
\times\Omega$, from which $\theta\le1$ follows immediately.

\end{proof}

From assumptions \textbf{(H4)-(H5)}, it follows that the following problem has
a unique solution in $H^{1}\left(  \Omega\right)  $ for an arbitrary but fixed
time $t>0$.%
\[
\left\{
\begin{array}
[c]{l}%
\operatorname{div}\left(  A\left(  t,x\right)  \nabla w\left(  x\right)
\right)  =f\left(  x\right)  ,\text{ }\forall x\in\Omega\\
\left\langle A\left(  t,x\right)  \nabla w\left(  x\right)  ,n\left(
x\right)  \right\rangle =0,\text{ }\forall x\in\partial\Omega
\end{array}
\right.
\]
where $f\in L^{2}\left(  \Omega\right)  $. Theorems 3.6.1 and 3.6.2 of
\cite{barbu} imply that there is a complete orthonormal system $\left\{
\varphi_{n}\left(  t,.\right)  \right\}  _{n\in%
\mathbb{N}
}\subset L^{2}\left(  \Omega\right)  $ of eigenfunctions and eigenvalues
$\left\{  \lambda_{n}\left(  t\right)  \right\}  $ such that $\forall n\in%
\mathbb{N}
,$
\[
\left\{
\begin{array}
[c]{l}%
\operatorname{div}\left(  A\left(  t,x\right)  \nabla\varphi_{n}\left(
t,x\right)  \right)  =\lambda_{n}\left(  t\right)  \varphi_{n}\left(
t,x\right)  ,\text{ }\forall x\in\Omega\\
\left\langle A\left(  t,x\right)  \nabla\varphi_{n}\left(  t,x\right)
,n\left(  x\right)  \right\rangle =0,\text{ }\forall x\in\partial\Omega
\end{array}
\right.
\]
Moreover, the sequence $\left(  \varphi_{n}\left(  t,.\right)  \right)  _{n\in%
\mathbb{N}
}$ is $H^{1}\left(  \Omega\right)  -$valued, and if $\partial\Omega$ is of
class $C^{2}$ then $\left(  \varphi_{n}\left(  t,.\right)  \right)  _{n\in%
\mathbb{N}
}$ is $H^{2}\left(  \Omega\right)  -$valued.

Now we make the following assumption

\textbf{(H8):} The sequence $\left(  \varphi_{n}\right)  $ is independent of
time (that is, $\varphi_{n}\left(  t,.\right)  =\varphi_{n}\left(  .\right)
,$ $\forall t>0$).

In particular, \textbf{(H8)} holds if $A\left(  t,.\right)  $ has the form
$\mu\left(  t\right)  B\left(  .\right)  $ with $\mu\left(  t\right)  \in%
\mathbb{R}
$, $\forall t\geq0$. In the special case where $A\left(  t,.\right)
=\mu\left(  t\right)  I$ ($I$ is the $3\times3$ identity matrix), we have
$\operatorname{div}\left(  A\left(  t,.\right)  \nabla w\right)  = \mu(t)
\Delta w$.

When \textbf{(H8)} is satisfied, a weak solution $\theta$ of $\left(
\ref{ModelPulse1IDE}\right)  -\left(  \ref{ModelPulse3IDE}\right)  $ can be
written as $%
{\displaystyle\sum\nolimits_{n=0}^{\infty}}
\theta_{n}\varphi_{n}$, where each $\theta_{n}$ is an absolutely continuous
function of time that for $t\in\left]  \tau_{k},\tau_{k+1}\right]  ,$
satisfies%
\begin{align}
&  \theta_{n}\left(  t\right)  =%
{\displaystyle\sum\nolimits_{m=0}^{\infty}}
\theta_{m}%
{\displaystyle\int\nolimits_{\Omega}}
\varphi_{m}\left(  x\right)  \varphi_{n}\left(  x\right)
dx\label{DecomposedSolution}\\
&  =\theta_{n}\left(  \tau_{k}^{+}\right)  +%
{\displaystyle\int\nolimits_{\tau_{k}}^{t}}
\lambda_{n}\left(  s\right)  \theta_{n}\left(  s\right)  ds\nonumber\\
&  -%
{\displaystyle\int\nolimits_{\tau_{k}}^{t}}
{\displaystyle\int\nolimits_{\Omega}}
\alpha\left(  s,x\right)  \varphi_{n}\left(  x\right)  /\left(  1-\sigma
u\left(  s,x\right)  \right) \nonumber\\
&  \times\sum\nolimits_{m=0}^{\infty}\theta_{m}\left(  s\right)  \varphi
_{m}\left(  x\right)  dxds\nonumber\\
&  +%
{\displaystyle\int\nolimits_{\tau_{k}}^{t}}
{\displaystyle\int\nolimits_{\Omega}}
\alpha\left(  s,x\right)  \varphi_{n}\left(  x\right)  dsdx.\nonumber
\end{align}
We are now ready to prove existence and uniqueness of weak solutions, given
the additional conditions we have imposed. The proof parallels that of
Proposition~\ref{ExistenceGlobalSolutionIDE}.

\begin{theorem}
\label{ExistenceGlobalSolutionIPDE}Under conditions (\textbf{H3}%
)--(\textbf{H8}), the problem $\left(  \ref{ModelPulse1}\right)  -\left(
\ref{ModelPulse4}\right)  $ has a unique global weak solution $\theta$.
Moreover, if $\partial\Omega$ is of class $C^{2}$ then $\forall t>0,$
$\theta\left(  t,.\right)  \in H^{2}\left(  \Omega\right)  $.
\end{theorem}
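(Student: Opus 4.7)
The plan is to mirror the proof of Proposition~\ref{ExistenceGlobalSolutionIDE}, working on each interpulse interval $]\tau_k,\tau_{k+1}]$ and then stitching the pieces together with the impulse rule (\ref{ModelPulse2}). Fix such an interval with prescribed data $\theta(\tau_k^+,\cdot)\in H^1(\Omega;[0,1])$; the boundedness already granted by Proposition~\ref{boundednessIPDE} will prevent blow-up and allow global continuation once local existence is in hand.

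For existence I would exploit (\textbf{H8}) to recast the problem as the countable ODE system (\ref{DecomposedSolution}) for the spectral coefficients. Concretely, set $\theta^N(t,x)=\sum_{n=0}^{N}\theta_{n}^{N}(t)\varphi_{n}(x)$ and project the weak form (\ref{weak_1}) onto $\mathrm{span}\{\varphi_0,\ldots,\varphi_N\}$. The resulting right-hand side is measurable in $t$ by (\textbf{H3}) and (\textbf{H6}), globally Lipschitz in the coefficient vector (since $\xi\mapsto\alpha(t,x)(1-\xi/(1-\sigma u(t,x)))$ has Lipschitz constant at most $\|\alpha(t,\cdot)\|_{L^\infty}/(1-\sigma)$), and dominated by the $L^1_{\mathrm{loc}}$ function $\|\alpha(t,\cdot)\|_{L^\infty}$, so the Carath\'eodory theorem invoked in Proposition~\ref{ExistenceGlobalSolutionIDE} produces a local solution $\theta^N$. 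Testing the Galerkin equation against $\theta^N$ and using the coercivity (\textbf{H5}) yields uniform bounds in $L^\infty(\tau_k,\tau_{k+1};L^2(\Omega))\cap L^2(\tau_k,\tau_{k+1};H^1(\Omega))$, and (\ref{DecomposedSolution}) then bounds $\partial_t\theta^N$ in $L^2(\tau_k,\tau_{k+1};H^1(\Omega)')$. The Aubin--Lions lemma delivers a subsequence converging strongly in $L^2((\tau_k,\tau_{k+1})\times\Omega)$ and weakly in $L^2(\tau_k,\tau_{k+1};H^1(\Omega))$, enough to pass to the limit in every term of the weak form. The impulse (\ref{ModelPulse2}) with $v_i\in L^\infty(\Omega;[0,1])$ preserves membership in $H^1(\Omega;[0,1])$ and advances the construction to the next interval.

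For uniqueness and continuous dependence, let $\theta^1,\theta^2$ be two weak solutions with the same data at $\tau_k^+$, put $w=\theta^1-\theta^2$, and test the difference of the two weak forms with $w$ itself. Mimicking the computation of Proposition~\ref{boundednessIPDE} and invoking (\textbf{H5}) yields
\[
\tfrac12\partial_t\|w(t,\cdot)\|_{L^2}^2+\delta\|\nabla w(t,\cdot)\|_{L^2}^2\leq \frac{\|\alpha(t,\cdot)\|_{L^\infty}}{1-\sigma}\|w(t,\cdot)\|_{L^2}^2,
\]
and Gronwall, iterated across intervals through the $v_i\in[0,1]$ contraction, gives the PDE analogue of (\ref{contin})---hence uniqueness and continuous dependence on $u$ and $(v_i)$. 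When $\partial\Omega$ is of class $C^{2}$, rearranging (\ref{ModelPulse1}) shows that for a.e.\ $t$ the function $\theta(t,\cdot)\in H^1(\Omega)$ solves $\operatorname{div}(A(t,\cdot)\nabla\theta(t,\cdot))=\partial_t\theta(t,\cdot)-\alpha(t,\cdot)(1-\theta(t,\cdot)/(1-\sigma u(t,\cdot)))$ with no-flux boundary data and an $L^2(\Omega)$ right-hand side, so the elliptic regularity result already invoked before (\textbf{H8}) (Theorems 3.6.1--3.6.2 of \cite{barbu}) upgrades $\theta(t,\cdot)$ to $H^2(\Omega)$.

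The main obstacle is the passage to the limit in the nonlinear term $\int_\Omega \alpha\,\theta^N\,\varphi_n/(1-\sigma u)\,dx$ appearing in (\ref{DecomposedSolution}): this is where the Aubin--Lions strong $L^2$ compactness and the time-derivative estimate must be used together, and where the regularity assumptions (\textbf{H3})--(\textbf{H4}) are consumed in an essential way. Everything else is either algebraic rearrangement of the weak form or a direct transcription of the scalar arguments already carried out in Propositions~\ref{boundednessIPDE} and~\ref{ExistenceGlobalSolutionIDE}.
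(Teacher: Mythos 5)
Your proposal reaches the same conclusion by a genuinely different route. The paper also exploits \textbf{(H8)} to pass to the spectral coefficients, but instead of a Galerkin truncation it introduces the operator $\mathcal{G}$ acting on $\ell^{2}(\mathbb{R})$, asserts that $\mathcal{G}$ generates an explicitly written evolution system $U(s,t)$, and represents the solution on each interpulse interval by a variation-of-constants formula; uniqueness is then handled by a Gronwall argument ``paralleling'' Proposition~\ref{ExistenceGlobalSolutionIDE}, and the $H^{2}$ claim is read off from the fact that the eigenfunctions $\varphi_{n}$ are $H^{2}$-valued when $\partial\Omega$ is $C^{2}$. Your Galerkin-plus-energy-estimates argument is the more standard and, as written, the more fully justified one (the paper never verifies that $\mathcal{G}$ generates the claimed evolution system), while the paper's approach buys an explicit solution formula. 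One simplification available to you: the equation is affine in $\theta$, so the term you single out as ``nonlinear'' is in fact linear, and weak convergence in $L^{2}(\tau_k,\tau_{k+1};H^{1}(\Omega))$ already suffices to pass to the limit in every term; Aubin--Lions is harmless but not needed.

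Two caveats. First, your assertion that the impulse (\ref{ModelPulse2}) with $v_{i}\in L^{\infty}(\Omega;[0,1])$ ``preserves membership in $H^{1}(\Omega;[0,1])$'' is false in general, since the product of an $L^{\infty}$ function with an $H^{1}$ function need not lie in $H^{1}$ (take $v_i$ a discontinuous indicator). This does not sink the construction --- you should simply restart each interval from $L^{2}(\Omega)$ data, which the Galerkin scheme accepts and which parabolic smoothing upgrades to $H^{1}$ for $t>\tau_i$ --- but the claim as stated needs repair (the paper is silent on this point as well). Second, your elliptic-regularity route to $\theta(t,\cdot)\in H^{2}(\Omega)$ requires knowing $\partial_{t}\theta(t,\cdot)\in L^{2}(\Omega)$ for a.e.\ $t$, which demands the second-tier energy estimate (testing against $\operatorname{div}(A\nabla\theta^{N})$ or $\partial_t\theta^N$) rather than the basic one you state; the paper avoids this by invoking the $H^{2}$ regularity of the $\varphi_{n}$ directly. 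With those two repairs your argument is sound.
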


\begin{proof}
For existence, it suffices to establish existence of a local solution and use
Proposition \ref{boundednessIPDE} to conclude the result. It also suffices to
restrict ourselves to the set $\left[  t_{0}^{+},t_{1}\right]  $. Let $\ell
^{2}(\mathbb{R})$ denote the Hilbert space of real-valued sequences $\left\{
s_{n}\right\}  _{n\in%
\mathbb{N}
}$ such that $\sum\nolimits_{n\in%
\mathbb{N}
}\left\vert s_{n}\right\vert ^{2}<\infty$ . Consider the operator
$\mathcal{G}:D\left(  \mathcal{G}\right)  \rightarrow\ell^{2}\left(
\mathbb{R} \right)  $ where $D\left(  \mathcal{G}\right)  \subseteq\left[
t_{0}^{+},t_{1}\right]  \times\ell^{2}\left(  \mathbb{R} \right)  $ defined
by
\begin{align*}
&  \mathcal{G}\left(  t,y\right) \\
&  = \left(  \lambda_{n}\left(  t\right)  y_{n}-\sum\nolimits_{m=1}^{\infty
}y_{m}{\displaystyle\int\nolimits_{\Omega}} \frac{\alpha\left(  t,x\right)
\varphi_{m}\left(  x\right)  \varphi_{n}\left(  x\right)  }{\left(  1-\sigma
u\left(  t,x\right)  \right)  }dx\right)  _{n\in\mathbb{N} }.
\end{align*}
The set $\bigcap\nolimits_{t\in\left[  t_{0}^{+},t_{1}\right]  }D\left(
\mathcal{G}\left(  t,.\right)  \right)  $ is a nonempty dense subset of
$\ell^{2}\left(
\mathbb{R}
\right)  , $ since it contains the set of stationary sequences converging to
$0$ which is also dense in $\ell^{2}\left(
\mathbb{R}
\right)  $. $\mathcal{G}$ is the infinitesimal generator of the evolution
system defined from $\left[  t_{0}^{+},t_{1}\right]  \times\left[  t_{0}%
^{+},t_{1}\right]  \times\bigcap\nolimits_{t\in\left[  t_{0}^{+},t_{1}\right]
}D\left(  \mathcal{G}\left(  t,.\right)  \right)  $ to $\ell^{2}\left(
\mathbb{R}
\right)  $ by $(\forall n\in%
\mathbb{N}
$, $\forall t_{0}^{+}\leq s\leq t\leq t_{1}$)
\begin{align*}
\left(  U\left(  s,t\right)  y\right)  _{n}  &  =%
{\displaystyle\int\nolimits_{\Omega}}
\exp\left(  \int\nolimits_{s}^{t}\left(  \lambda_{n}\left(  \tau\right)
-\frac{\alpha\left(  \tau,x\right)  }{\left(  1-\sigma u\left(  \tau,x\right)
\right)  }\right)  d\tau\right) \\
&  \times\sum\nolimits_{m=1}^{\infty}y_{m}\varphi_{m}\left(  x\right)
\varphi_{n}\left(  x\right)  dx
\end{align*}
Hence, if we set $\forall n\in%
\mathbb{N}
$, $\theta_{n}\left(  t_{0}^{+}\right)  =y_{n}$ then the solution of $\left(
\ref{ModelPulse1}\right)  -\left(  \ref{ModelPulse4}\right)  $ is given by
\begin{align*}
\theta\left(  t,x\right)   &  =\sum\nolimits_{n=0}^{\infty}\varphi_{n}\left(
x\right)  \left(  U\left(  t_{0}^{+},t\right)  y\right)  _{n}\\
&  +\int\nolimits_{t_{0}}^{t}\varphi_{n}\left(  x\right)  \left(  U\left(
s,t\right)  z\left(  s\right)  \right)  _{n}%
\end{align*}
where $z_{n}\left(  s\right)  =%
{\displaystyle\int\nolimits_{\Omega}}
\alpha\left(  s,x\right)  \varphi_{n}\left(  x\right)  dx$.
\end{proof}

Just as with the averaged model, the solution of $\left(  \ref{ModelPulse1}%
\right)  -\left(  \ref{ModelPulse4}\right)  $ is continuous with respect to
the controls $u$ and $\left(  v_{i}\right)  $.

\section{Optimal control with purely impulsive stategy\label{SectionCOV}}

In this section we consider the behavior of controlled solutions on a fixed
time interval $\left[  0,T\right]  , $ where $T\in\left[  \tau_{k},\tau
_{k+1}\right[  $. Practically, this interval may be taken as representing one
annual production period. As in previous sections, we first consider the
averaged model, and then use analogous methods to obtain results for the
main model. Although we are optimizing only with respect to the impulsive strategy, 
 we retain the continuous control function $u$ as a fixed function. In Section~\ref{SectionCOUV}, we will optimize 
also with respect to $u$.

\subsection{Averaged model with impulsive strategy\label{COVAgragatedModel}}

The aim of this subsection is to characterize the impulsive strategy $v^{\ast
}=\left(  v_{i}^{\ast}\right)  _{0\leq i\leq k}$ which minimizes the following
cost functional for the averaged model:%
\begin{equation}
J\left(  v\right)  =%
{\displaystyle\int\nolimits_{0}^{T}}
\Theta\left(  s\right)  ds+%
{\displaystyle\sum\nolimits_{i=0}^{k}}
c_{i}\left(  1-v_{i}\right)  \Theta\left(  \tau_{i}\right)  +C_{f}%
\Theta\left(  T\right)  , \label{eq:Jdef}%
\end{equation}
where $c=\left(  c_{i}\right)  _{i\in%
\mathbb{N}
}$ is a $%
\mathbb{R}
_{+}^{\ast}-$valued sequence of cost ratios related to the use of impulsive
control and $C_{f}$ is also a cost related to the final inhibition rate.
The existence of such an optimal strategy is guaranteed by the following proposition.

\begin{proposition}\label{prop:exist}
There is an optimal strategy $v^{\ast}=\left(  v_{i}^{\ast}\right)  _{0 \le i
\le k}$ which minimizes $J$.
\end{proposition}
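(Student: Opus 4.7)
The plan is to apply the classical direct method of the calculus of variations. The admissible set $V=[0,1]^{k+1}$ is a compact subset of $\mathbb{R}^{k+1}$, so once I verify that $v\mapsto J(v)$ is continuous on $V$, the Weierstrass theorem produces a minimizer $v^{\ast}$.

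I would take a minimizing sequence $v^{(n)}\in V$ with $J(v^{(n)})\downarrow\inf_{V}J$ and, by Bolzano--Weierstrass in $\mathbb{R}^{k+1}$, pass to a subsequence converging to some $v^{\ast}\in V$. Writing $\Theta^{(n)}$ and $\Theta^{\ast}$ for the corresponding solutions of $(\ref{ModelPulse1IDE})$--$(\ref{ModelPulse3IDE})$, the Gronwall estimate (\ref{contin}) proved for Proposition~\ref{ExistenceGlobalSolutionIDE}, applied recursively on each of the finitely many subintervals $]\tau_{i},\tau_{i+1}]\subset[0,T]$, yields uniform convergence $\Theta^{(n)}\to\Theta^{\ast}$ away from the impulse times. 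Given this, passage to the limit in each piece of $J$ is straightforward: $\int_{0}^{T}\Theta^{(n)}(s)\,ds\to\int_{0}^{T}\Theta^{\ast}(s)\,ds$ by dominated convergence (every $\Theta^{(n)}$ takes values in $[0,1]$ by Proposition~\ref{boundednessIDE}); the finite sum $\sum_{i=0}^{k}c_{i}(1-v_{i}^{(n)})\Theta^{(n)}(\tau_{i})$ converges factor by factor; and the terminal term $C_{f}\Theta^{(n)}(T)$ converges pointwise. Hence $J(v^{(n)})\to J(v^{\ast})$ and $v^{\ast}$ attains the infimum.

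The one genuine subtlety, which I expect to be the main obstacle, is that the impulse instants $\tau_{i}$ are themselves defined in terms of $\Theta$ via $(\ref{ModelPulse2TIDE})$, so in principle a perturbation of $v$ could shift the entire impulse schedule and break the continuous dependence invoked above. I would handle this by observing that each $\tau_{i}$ takes values in the fixed discrete grid $(t_{k})$, so the schedule is locally constant in $v$ except at the exceptional configurations where some $\Theta(t_{k})$ exactly hits the threshold $\sigma^{\ast}$. One can then either restrict the minimizing sequence to a neighbourhood of $v^{\ast}$ on which the schedule is identical to that of $v^{\ast}$, or argue separately that such threshold-crossing configurations can be perturbed away without increasing $J$. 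With the schedule stabilised along the subsequence, the continuity argument goes through unchanged, yielding the desired $v^{\ast}$.
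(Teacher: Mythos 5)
Your proposal is correct and follows essentially the same route as the paper: the paper's proof is exactly the direct method, noting that $J$ is bounded, taking a minimizing sequence in the compact set $\left[0,1\right]^{k+1}$, extracting a convergent subsequence, and invoking continuity of $J$. The only difference is that you supply the justification for that continuity (the Gronwall estimate (\ref{contin}) on each subinterval, plus the observation that the state-dependent impulse schedule $\left(\tau_{i}\right)$ is locally constant in $v$ away from threshold-crossing configurations), a point the paper simply asserts without comment.
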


\begin{proof}
Note that $0\leq J\leq1+T+ {\displaystyle\sum\nolimits_{i=0}^{k}} c_{i}$, so
it is possible to define $J^{\ast}\equiv\underset{v\in\left[  0,1\right]
^{k+1}}{\inf}J\left(  v\right)  $, and there is a sequence $\left\{
v^{n}\right\}  _{n\in%
\mathbb{N}
}$ such that the sequence $\left\{  J\left(  v^{n}\right)  \right\}  _{n\in%
\mathbb{N}
}$ converges to $J^{\ast}$. Since $\left[  0,1\right]  ^{k+1}$ is compact and
$J$ is continuous, there is a subsequence $\left\{  v^{n_{m}}\right\}  $ which
converges to $v^{\ast}=\left(  v_{i}^{\ast}\right)  _{0\leq i\leq k}\in\left[
0,1\right]  ^{k+1}$ such that $J\left(  v^{\ast}\right)  =J^{\ast}$.
\end{proof}

In the remainder of this subsection we characterize the optimal control
strategy in such a way that it may be computed. We have mentioned above that
the solution of $\left(  \ref{ModelPulse1IDE}\right)  -\left(
\ref{ModelPulse3IDE}\right)  $ is continuous with respect to control
strategies $u$ and $\left(  v_{i}\right)  $. Now we add the assumption that
the solution of $\left(  \ref{ModelPulse1IDE}\right)  -\left(
\ref{ModelPulse3IDE}\right)  $ is G\^{a}teaux differentiable with respect to
$v$.

Let $\Theta_{\underline{v}}$ be the solution of $\left(  \ref{ModelPulse1IDE}%
\right)  -\left(  \ref{ModelPulse3IDE}\right)  $ associated with a chosen
control strategy $\underline{v}$, and let $z_{v}$ be its directional
derivative, 
\begin{equation*}
z_{v}=D_{v}\Theta_{\underline{v}}
\equiv \lim_{\lambda \rightarrow 0}\dfrac{\Theta_{\underline{v}+\lambda v} - \Theta_{\underline{v}}}{\lambda}.
\end{equation*}
Then $z_{v}$ may be computed as follows. Let $\Phi$ be the semiflow
corresponding to $\left(  \ref{ModelPulse1IDE}\right)  -\left(
\ref{ModelPulse3IDE}\right)  $. $\Phi$ satisfies $\forall t\in\left]  \tau
_{i},\tau_{i+1}\right]  ,$%
\begin{align}
&  \Phi\left(  t,\tau_{i},x\right)  =x\exp\left(  -%
{\displaystyle\int\nolimits_{\tau_{i}}^{t}}
\alpha\left(  s\right)  /\left(  1-\sigma u\left(  s\right)  \right)
ds\right) \nonumber\\
&  \quad+%
{\displaystyle\int\nolimits_{\tau_{i}}^{t}}
\alpha\left(  s\right)  \exp\left(  -%
{\displaystyle\int\nolimits_{s}^{t}}
\alpha\left(  \tau\right)  /\left(  1-\sigma u\left(  \tau\right)  \right)
d\tau\right)  ds \label{eq:Phi}%
\end{align}
and
\[
\partial_{x}\Phi\left(  t,\tau_{i},x\right)  =\exp\left(  -%
{\displaystyle\int\nolimits_{\tau_{i}}^{t}}
\alpha\left(  s\right)  /\left(  1-\sigma u\left(  s\right)  \right)
ds\right)
\]
On the other hand
\begin{align*}
\Theta_{\underline{v}}\left(  t\right)   &  =\Phi\left(  t,\tau_{i}%
,\Theta_{\underline{v}}\left(  \tau_{i}^{+}\right)  \right) \\
&  =\Phi\left(  t,\tau_{i},\underline{v}_{i}\Theta_{\underline{v}}\left(
\tau_{i}\right)  \right) \\
&  =\Phi\left(  t,\tau_{i},\underline{v}_{i}\Phi\left(  \tau_{i},\tau
_{i-1},\underline{v}_{i-1}\Theta_{\underline{v}}\left(  \tau_{i-1}\right)
\right)  \right)
\end{align*}
We have $\forall t\in\left]  0,\tau_{1}\right]  $,
\[
z_{v}\left(  t\right)  =v_{0}\Theta_{\underline{v}}\left(  0\right)
\exp\left(  -%
{\displaystyle\int\nolimits_{0}^{t}}
\alpha\left(  s\right)  /\left(  1-\sigma u\left(  s\right)  \right)
ds\right)
\]
and $\forall t\in\left]  \tau_{i},\tau_{i+1}\right]  ,i\in%
\mathbb{N}
^{\ast}$ we get
\begin{align}
z_{v}\left(  t\right)   &  =\left(  v_{i}\Theta_{\underline{v}}\left(
\tau_{i}\right)  +\underline{v}_{i}z_{v}\left(  \tau_{i}\right)  \right)
\nonumber\\
&  \qquad\times\exp\left(  -%
{\displaystyle\int\nolimits_{\tau_{i}}^{t}}
\alpha\left(  s\right)  /\left(  1-\sigma u\left(  s\right)  \right)
ds\right) \nonumber\\
&  =v_{i}\Theta_{\underline{v}}(\tau_{i})\nonumber\\
&  \qquad\times\exp\left(  -{\displaystyle\int\nolimits_{\tau_{i}}^{t}}%
\alpha\left(  s\right)  /\left(  1-\sigma u\left(  s\right)  \right)
ds\right) \nonumber\\
&  \quad+%
{\displaystyle\sum\nolimits_{j=0}^{i-1}}
v_{j}\left(
{\displaystyle\prod\nolimits_{l=j}^{i}}
\underline{v}_{l}\right)  \Theta_{\underline{v}}\left(  \tau_{j}\right)
\nonumber\\
&  \qquad\times\exp\left(  -%
{\displaystyle\int\nolimits_{\tau_{j}}^{t}}
\alpha\left(  s\right)  /\left(  1-\sigma u\left(  s\right)  \right)
ds\right)  . \label{eq:zv_computation}%
\end{align}
It follows that $z_{v}(0^{+})=v_{0}\Theta_{0},$ and $\forall t\in
\mathbb{R}_{+}^{\ast}\setminus\left(  \tau_{i}\right)  _{i\in\mathbb{N}}$, we
have
\begin{equation}
dz_{v}/dt=-\alpha(t)z_{v}(t)/(1-\sigma u(t)). \label{eq:deriv_z}%
\end{equation}
Also, $\forall i\in%
\mathbb{N}
^{\ast}$ we have%
\begin{align}
z_{v}\left(  \tau_{i}^{+}\right)  =  &  v_{i}\Theta_{\underline{v}}(\tau_{i})+%
{\displaystyle\sum\nolimits_{j=0}^{i-1}}
v_{j}\left(
{\displaystyle\prod\nolimits_{l=j+1}^{i}}
\underline{v}_{l}\right)  \Theta_{\underline{v}}\left(  \tau_{j}\right)
\nonumber\\
&  \quad\times\exp\left(  -%
{\displaystyle\int\nolimits_{\tau_{j}}^{\tau_{i}}}
\alpha\left(  s\right)  /\left(  1-\sigma u\left(  s\right)  \right)
ds\right)  . \label{eq:zv_tau_averaged}%
\end{align}
Note that $z_{v}$ can be written in the form $z_{v}=\sum\Psi_{\underline{v}%
}^{j}v_{j}$, where the coefficients $\Psi_{\underline{v}}^{j}$ may be inferred
from the above expressions for $z_{v}$.

Let $J_v(\underline{v}) \equiv D_{v}J(\underline{v})$ be the directional derivative of $J$ in the direction of $v$
 for a chosen control strategy $\underline{v}$. From (\ref{eq:Jdef}) we
may compute
\begin{align}
J_v(\underline{v})=  &  C_{f}z_{v}\left(  T\right)  +{\displaystyle\int\nolimits_{0}^{T}%
}z_{v}\left(  s\right)  ds\nonumber\\
&  +{\displaystyle\sum\nolimits_{i=0}^{k}}c_{i}\left(  \left(  1-\underline{v}_{i}\right)  z_{v}\left(  \tau_{i}\right)  -v_{i}\Theta_{\underline{v}}\left(
\tau_{i}\right)  \right)  \label{eq:Jv}%
\end{align}
Then we have


\begin{proposition}
\label{eq:OptStrategyAverage}For any control strategy $\underline{v}$ we have
\begin{equation}%
J_v(\underline{v}) = 
{\displaystyle\sum\nolimits_{i=0}^{k}}
 \left(  p_{\underline{v}}\left(  \tau_{i}^{+}\right)  -c_{i}\right)  v_{i}%
\Theta_{\underline{v}}\left(  \tau_{i}\right),
\label{eq:CondOptStrategy0}%
\end{equation}
where $v \in V$,
\[
V \equiv \left\{  v\in%
\mathbb{R}
^{k+1};\exists\varepsilon>0;\underline{v}+\varepsilon v\in\left[  0,1\right]
^{k+1}\right\}  ,
\]
and $p_{\underline{v}}$ is solution of the following adjoint problem:
\begin{align}
d_{t}p_{\underline{v}}  &  =\alpha\left(  t\right)  p_{\underline{v}}/\left(  1-\sigma
u\left(  t\right)  \right)  -1,\text{ }\label{EqAdjp1}\\
&  \qquad t\in\left]  0,T\right]  \setminus\left\{  \tau_{i}\right\}  ,\text{
}i\in\left[  0,k\right]  \cap%
\mathbb{N}
^{\ast}\text{ };\nonumber\\
\text{ }p_{\underline{v}}\left(  T\right)   &  =C_f,\text{ }p_{\underline{v}}\left(
\tau_{i}\right)  =c_{i}\left(  1-\underline{v}_{i}\right)  +p_{\underline{v}}\left(
\tau_{i}^{+}\right)  \underline{v}_{i}. \label{EqAdjp2}%
\end{align}
In particular, in the case where $v^{\ast}$ is an optimal solution then  
\begin{equation}%
 J_{v}(v^{\ast}) 
= 
{\displaystyle\sum\nolimits_{i=0}^{k}}
 \left(  p_{v^{\ast}}\left(  \tau_{i}^{+}\right)  -c_{i}\right)  v_{i}%
\Theta_{v^{\ast}}\left(  \tau_{i}\right)
\ge 0.
\label{eq:CondOptStrategy}%
\end{equation}

\end{proposition}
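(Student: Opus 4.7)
The plan is a costate/adjoint computation: combine the adjoint equation satisfied by $p_{\underline{v}}$ with the variational equations (\ref{eq:deriv_z}) and (\ref{eq:zv_tau_averaged}) for $z_v$ so as to eliminate the unknown $z_v$ from the expression (\ref{eq:Jv}) for $J_v(\underline{v})$. The inequality $J_v(v^{\ast})\ge 0$ will then follow immediately from the fact that $v^{\ast}$ minimizes $J$ over $[0,1]^{k+1}$.

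First I would observe that on each open interval $(\tau_i,\tau_{i+1})$, equations (\ref{EqAdjp1}) and (\ref{eq:deriv_z}) combine to give $\frac{d}{dt}(p_{\underline{v}}z_v)=-z_v$. Integrating on $(\tau_i,\tau_{i+1}]$ and summing for $i=0,\ldots,k$, with the convention $\tau_{k+1}=T$, produces
\[
\int_{0}^{T} z_v(s)\,ds = p_{\underline{v}}(0^{+})z_v(0^{+}) - p_{\underline{v}}(T)z_v(T) + \sum_{i=1}^{k}\bigl(p_{\underline{v}}(\tau_i^{+})z_v(\tau_i^{+}) - p_{\underline{v}}(\tau_i)z_v(\tau_i)\bigr).
\]
Since $\Theta_0$ does not depend on the control, $z_v(0)=0$ and hence $z_v(0^{+})=v_0\Theta_0=v_0\Theta_{\underline{v}}(\tau_0)$; moreover $p_{\underline{v}}(T)=C_f$ by (\ref{EqAdjp2}).

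Next, I would substitute this identity into (\ref{eq:Jv}). The term $C_f z_v(T)$ cancels with $-p_{\underline{v}}(T)z_v(T)$. For each $i\ge 1$, applying the primal jump $z_v(\tau_i^{+})=v_i\Theta_{\underline{v}}(\tau_i)+\underline{v}_i z_v(\tau_i)$ obtained from (\ref{eq:zv_tau_averaged}) and the adjoint jump $p_{\underline{v}}(\tau_i)=c_i(1-\underline{v}_i)+p_{\underline{v}}(\tau_i^{+})\underline{v}_i$ from (\ref{EqAdjp2}) yields
\[
p_{\underline{v}}(\tau_i^{+})z_v(\tau_i^{+}) - p_{\underline{v}}(\tau_i)z_v(\tau_i) = p_{\underline{v}}(\tau_i^{+})v_i\Theta_{\underline{v}}(\tau_i) - c_i(1-\underline{v}_i)z_v(\tau_i),
\]
so the $z_v(\tau_i)$ contributions exactly cancel the corresponding $c_i(1-\underline{v}_i)z_v(\tau_i)$ terms in the original cost sum of (\ref{eq:Jv}). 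What remains assembles into $\sum_{i=0}^{k}(p_{\underline{v}}(\tau_i^{+})-c_i)v_i\Theta_{\underline{v}}(\tau_i)$, which is precisely (\ref{eq:CondOptStrategy0}).

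For the optimality statement, if $v^{\ast}$ minimizes $J$ over $[0,1]^{k+1}$, then for any $v\in V$ there exists $\varepsilon>0$ with $v^{\ast}+\lambda v\in[0,1]^{k+1}$ whenever $0<\lambda\le\varepsilon$, so $J(v^{\ast}+\lambda v)\ge J(v^{\ast})$; dividing by $\lambda$ and letting $\lambda\to 0^{+}$ gives $J_v(v^{\ast})\ge 0$, and combined with the identity just proved this yields (\ref{eq:CondOptStrategy}). The main obstacle is not analytical but bookkeeping: the jump condition (\ref{EqAdjp2}) has been specifically engineered so that the $z_v(\tau_i)$ factors telescope out, and the care lies in correctly handling the boundary index $i=0$ (where $\tau_0=0$ carries no left-limit jump because $\Theta_0$ is data) and in tracking the signs through the integration by parts.
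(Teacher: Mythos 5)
Your proof is correct and follows essentially the same route as the paper: the paper evaluates $\int p_{\underline{v}}\,dz_{v}$ once by parts and once via the ODE (\ref{eq:deriv_z}) and equates the results, which is exactly your product-rule identity $\tfrac{d}{dt}\left(p_{\underline{v}}z_{v}\right)=-z_{v}$ integrated and telescoped across the jumps, followed by the same substitution of the jump conditions (\ref{EqAdjp2}) and (\ref{eq:zv_tau_averaged}) and the same one-sided directional-derivative argument for $J_{v}(v^{\ast})\geq 0$. The only ingredient the paper includes that you pass over silently is a brief justification (via the argument of Proposition \ref{ExistenceGlobalSolutionIDE}) that the backwards adjoint problem (\ref{EqAdjp1})--(\ref{EqAdjp2}) actually has a unique absolutely continuous solution.
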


\begin{proof}
An argument similar to that in the proof of Proposition
\ref{ExistenceGlobalSolutionIDE} shows that there is a unique absolutely
continuous solution $p_{\underline{v}}$ to the problem $\left(  \ref{EqAdjp1}%
\right)  -\left(  \ref{EqAdjp2}\right)  $ which satisfies $\forall t\in\left]
\tau_{i},\tau_{i+1}\right]  ,$%
\begin{align*}
p_{\underline{v}}\left(  t\right)   &  =\left(  c_{i+1}\left(  1-\underline{v}_{i+1}\right)  +p_{\underline{v}}\left(  \tau_{i+1}^{+}\right)  \underline{v}_{i+1}\right) \\
&  \times\exp\left(  -%
{\displaystyle\int\nolimits_{t}^{\tau_{i+1}}}
\alpha\left(  s\right)  /\left(  1-\sigma u\left(  s\right)  \right)
ds\right) \\
&  +%
{\displaystyle\int\nolimits_{t}^{\tau_{i+1}}}
\exp\left(  -%
{\displaystyle\int\nolimits_{t}^{s}}
\alpha\left(  \tau\right)  /\left(  1-\sigma u\left(  \tau\right)  \right)
d\tau\right)  ds,
\end{align*}
and $\forall t\in\left]  \tau_{k},T\right]  $
\begin{align*}
p_{\underline{v}}\left(  t\right)   &  =C_{f}\exp\left(  -%
{\displaystyle\int\nolimits_{t}^{T}}
\alpha\left(  s\right)  /\left(  1-\sigma u\left(  s\right)  \right)
ds\right) \\
&  +%
{\displaystyle\int\nolimits_{t}^{T}}
\exp\left(  -%
{\displaystyle\int\nolimits_{t}^{s}}
\alpha\left(  \tau\right)  /\left(  1-\sigma u\left(  \tau\right)  \right)
d\tau\right)  ds.
\end{align*}
Using integration by parts we get
\begin{align*}
&
{\displaystyle\int\nolimits_{\tau_{k}}^{T}}
p_{\underline{v}}\left(  s\right)  dz_{v}\left(  s\right)  +%
{\displaystyle\sum\nolimits_{i=0}^{k-1}}
{\displaystyle\int\nolimits_{\tau_{i}}^{\tau_{i+1}}}
p_{\underline{v}}\left(  s\right)  dz_{v}\left(  s\right) \\
&  =p_{\underline{v}}\left(  T\right)  z_{v}\left(  T\right)  -p_{\underline{v}}\left(
\tau_{k}^{+}\right)  z_{v}\left(  \tau_{k}^{+}\right) \\
&  +%
{\displaystyle\sum\nolimits_{i=0}^{k-1}}
\left(  p_{\underline{v}}\left(  \tau_{i+1}\right)  z_{v}\left(  \tau_{i+1}\right)
-p_{\underline{v}}\left(  \tau_{i}^{+}\right)  z_{v}\left(  \tau_{i}^{+}\right)
\right) \\
&  -%
{\displaystyle\int\nolimits_{0}^{T}}
\left(  \alpha\left(  s\right)  p_{\underline{v}}\left(  s\right)  /\left(
1-\sigma u\left(  s\right)  \right)  -1\right)  z_{v}\left(  s\right)  ds.
\end{align*}
On the other hand, from (\ref{eq:deriv_z}) we get
\begin{align*}
&  {\displaystyle\int\nolimits_{\tau_{k}}^{T}}p_{\underline{v}}\left(  s\right)
\frac{dz_{v}}{ds}(s)\,ds+%
{\displaystyle\sum\nolimits_{i=0}^{k-1}}
{\displaystyle\int\nolimits_{\tau_{i}}^{\tau_{i+1}}}
p_{\underline{v}}\left(  s\right)  \frac{dz_{v}}{ds}\left(  s\right)  \,ds\\
&  =-%
{\displaystyle\int\nolimits_{0}^{T}}
\left(  \alpha\left(  s\right)  /\left(  1-\sigma u\left(  s\right)  \right)
\right)  p_{\underline{v}}\left(  s\right)  z_{v}\left(  s\right)  ds.
\end{align*}
Equating these two expressions yields (after rearrangement)
\begin{align*}
{\displaystyle\int\nolimits_{0}^{T}}  &  z_{v}\left(  s\right)  ds=-\left(
p_{\underline{v}}\left(  T\right)  z_{v}\left(  T\right)  -p_{\underline{v}}\left(
\tau_{k}^{+}\right)  z_{v}\left(  \tau_{k}^{+}\right)  \right) \\
&  ~~-%
{\displaystyle\sum\nolimits_{i=0}^{k-1}}
\left(  p_{\underline{v}}\left(  \tau_{i+1}\right)  z_{v}\left(  \tau_{i+1}\right)
-p_{\underline{v}}\left(  \tau_{i}^{+}\right)  z_{v}\left(  \tau_{i}^{+}\right)
\right)  .
\end{align*}
Plugging this into expression (\ref{eq:Jv}) for $J_{v}(\underline{v})$ and using
(\ref{EqAdjp2}) completes the proof of (\ref{eq:CondOptStrategy0}).

In the case where $\underline{v}$ is an optimal strategy $v^{\ast}$, then for an abitrary but fixed $v\in V$ and $\varepsilon>0$ sufficiently small, we
have $J\left(  v^{\ast}+\varepsilon v\right)  \geq J\left(  v^{\ast}\right)
,$ and consequently
\[
J_{v}(v^{\ast})=\lim_{\varepsilon\rightarrow0^{+}}\frac{1}{\varepsilon}\left(  J\left(
v^{\ast}+\varepsilon v\right)  -J\left(  v^{\ast}\right)  \right)  \geq0.
\]
\end{proof}
The result above is a version of the maximum principle which characterizes the
optimal strategy, but does not provide an efficient way to compute it. The following proposition provides a direct means for computing an optimal strategy in the case where $\sigma^{\ast}=0$.
\begin{proposition}
\label{eq:OptStrategyAverage2} There exists an optimal strategy $\underline{v}=v^{\ast}$ such that $v^{\ast}$ belongs to the set $\left\{  0,1\right\}  ^{k}$ and
\[
v_{i}^{\ast}=\left\{
\begin{array}
[c]{l}%
0\text{~~if }p_{v^{\ast}}\left(  \tau_{i}^{+}\right)  >c_{i}\text{ and }%
\Theta_{v^{\ast}}\left(  \tau_{i}\right)  \geq\sigma^{\ast},\\
1 \text{~~otherwise}.%
\end{array}
\right. %
\]

\end{proposition}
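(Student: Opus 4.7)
The plan is to construct an optimal strategy of the required form by backward dynamic programming, exploiting the fact that with $\sigma^{\ast}=0$ the set of pulse times $\tau_i=t_i$ is fixed exogenously and the state transitions are affine in the pulse controls. The first observation is that the semiflow $\Phi(t,\tau_i,\cdot)$ in (\ref{eq:Phi}) is affine in its third argument; iterating $\Theta_v(\tau_i^+)=v_i\Theta_v(\tau_i)$ together with this affinity makes $(v_0,\ldots,v_k)\mapsto\Theta_v(t)$ multilinear in $v$, and therefore $J$ is multilinear as well. In particular $J$ is affine in each $v_i$ with the other coordinates fixed, so the minimum on $[0,1]^{k+1}$ is already attained at some vertex of the cube.

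Second, I would set up a Bellman recursion on the cost-to-go
\[
V_i(\theta)=\min_{v_i,\ldots,v_k}\Bigl[\int_{\tau_i}^{T}\Theta(s)\,ds+\sum_{j=i}^{k}c_j(1-v_j)\Theta(\tau_j)+C_f\Theta(T)\Bigr]
\]
over trajectories generated forward from $\Theta(\tau_i)=\theta$. I would show by descending induction on $i$ that $V_i(\theta)=p_{v^{\ast}}(\tau_i)\theta+R_i$ for some constant $R_i$, where $p_{v^{\ast}}$ is the solution of the adjoint problem (\ref{EqAdjp1})--(\ref{EqAdjp2}) associated with the backward-greedy rule $v^{\ast}_i=0$ if $p_{v^{\ast}}(\tau_i^+)>c_i$ and $v^{\ast}_i=1$ otherwise. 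The base case $i=k$ is immediate because $p_{v^{\ast}}(\tau_k^+)$ depends only on $C_f$, $\alpha$, and $u$ on $(\tau_k,T]$; the inductive step reduces to the scalar minimization $\min_{v_i\in[0,1]}\theta\bigl[c_i+(p_{v^{\ast}}(\tau_i^+)-c_i)v_i\bigr]$, whose minimizer is exactly the prescribed rule (using $\theta\geq 0$ from Proposition~\ref{boundednessIDE}) and whose value equals $\theta\min(c_i,p_{v^{\ast}}(\tau_i^+))=\theta\,p_{v^{\ast}}(\tau_i)$ by the adjoint jump (\ref{EqAdjp2}).

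Bellman's principle then yields $V_0(\Theta_0)=J(v^{\ast})$, so the backward-greedy $v^{\ast}\in\{0,1\}^{k+1}$ is globally optimal; the condition $\Theta_{v^{\ast}}(\tau_i)\geq\sigma^{\ast}$ is automatic since $\sigma^{\ast}=0$ and $\Theta_{v^{\ast}}\geq 0$ by Proposition~\ref{boundednessIDE}. The main obstacle is the algebraic identification carried out in the inductive step: one must verify that the coefficient of $v_i\theta$ produced by unrolling the Bellman equation coincides with $p_{v^{\ast}}(\tau_i^+)$, and that the optimal value at each pulse reproduces the adjoint jump, so that the value function retains the form $p_{v^{\ast}}(\tau_i)\theta+R_i$ throughout the induction.
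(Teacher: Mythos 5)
Your proof is correct, and it takes a genuinely different route from the paper's. The paper first establishes existence of a minimizer by compactness (Proposition~\ref{prop:exist}), then applies the first-order variational inequality $J_{v}(v^{\ast})\geq 0$ of Proposition~\ref{eq:OptStrategyAverage} and reads off the sign of the coefficient $\left(p_{v^{\ast}}(\tau_i^{+})-c_i\right)\Theta_{v^{\ast}}(\tau_i)$ in each coordinate, handling the degenerate case $p_{v^{\ast}}(\tau_i^{+})=c_i$ by arguing that $v_i^{\ast}$ may be pushed to $1$ without changing $J$. Your argument instead constructs the optimizer directly: the multi-affine dependence of $\Theta$ (hence of $J$) on $(v_0,\dots,v_k)$ forces a vertex minimizer, and the backward Bellman recursion with the ansatz $V_i(\theta)=p_{v^{\ast}}(\tau_i)\theta+R_i$ identifies the greedy rule as globally optimal. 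The key algebraic identification you flag does go through: from the explicit formula for $p$ on $\left]\tau_i,\tau_{i+1}\right]$ one checks that the coefficient of $v_i\theta$ in the stage-plus-continuation cost is exactly $p_{v^{\ast}}(\tau_{i+1})\exp\bigl(-\int_{\tau_i}^{\tau_{i+1}}\alpha/(1-\sigma u)\bigr)+\int_{\tau_i}^{\tau_{i+1}}\exp\bigl(-\int_{\tau_i}^{s}\alpha/(1-\sigma u)\bigr)ds=p_{v^{\ast}}(\tau_i^{+})$, and the minimal value $\theta\min\left(c_i,p_{v^{\ast}}(\tau_i^{+})\right)$ reproduces the jump (\ref{EqAdjp2}). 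Your approach buys several things: it does not need the Gateaux-differentiability assumption or the compactness existence argument (existence falls out of the construction), and the multilinearity observation makes rigorous the paper's somewhat informal treatment of the tie case. Its one limitation, which you correctly acknowledge, is that it relies essentially on $\sigma^{\ast}=0$ to keep the pulse times $\tau_i=t_i$ exogenous; for $\sigma^{\ast}>0$ the times depend on $v$ and both the multilinearity and the dynamic-programming decomposition would break, whereas the paper's variational characterization is stated (if not fully justified) in that generality.
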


\begin{proof}
By Proposition~\ref{prop:exist}, we know that an optimal control strategy $v^{\ast}$ exists.
In the case where $p_{v^{\ast}}\left(  \tau_{i}%
^{+}\right)  > c_{i}$, then (\ref{eq:CondOptStrategy}) requires that  $v_{i} \in
\mathbb{R}_{+}$, which necessarily leads to $v_{i}^{\ast}=0$ (since $\Theta_{v^{\ast}
}\left(  \tau_{i}\right)  \geq\sigma^{\ast}>0$ by the definition of $\tau_i$). Similarly, if $p_{v^{\ast}%
}\left(  \tau_{i}^{+}\right)  <c_{i}$ then (\ref{eq:CondOptStrategy}) requires that $v_{i} \in
\mathbb{R}_{-}$, which corresponds necessarily to $v_{i}^{\ast}=1$.
If   $p_{v^{\ast}}\left(  \tau_{i} ^{+}\right)  = c_{i}$, then from (\ref{eq:CondOptStrategy}) we have that $J_{v_i}(v^{\ast})=0$. Supposing that $v_i^{\ast}<1$, we may increase $v_i^{\ast}$ without changing the value of $J(v^{\ast})$.  (Here we should note that increasing $v_i^{\ast}$ will not affect the condition
$\Theta_{v^{\ast}
}\left(  \tau_{j}\right)  \geq\sigma^{\ast}$ for $j \geq i$  because of monotonicity.)
   In particular, we may choose $v_i^{\ast}=1$ and still obtain a solution $v^{\ast}$ that minimizes the cost function.
\end{proof}
In the case where $\sigma^{\ast} = 0$, Proposition \ref{eq:OptStrategyAverage2} provides a constructive method for obtaining $v^{\ast}$, by using (\ref{EqAdjp1}) and (\ref{EqAdjp2}) to solve for $p_{v^{\ast}}(t)$ in a backwards time direction starting from $t=T$.  

\subsection{Space-dependent model with pulse strategy\label{COVGeneralModel}}

In this subsection we generalize the results of the the previous subsection by
establishing existence of and characterizing a strategy $v^{\ast}=\left(
v_{i}^{\ast}\right)  _{i\in%
\mathbb{N}
}$ which minimizes the following cost functional:%
\begin{align}
J\left(  v\right)   &  =%
{\displaystyle\int\nolimits_{0}^{T}}
{\displaystyle\int\nolimits_{\Omega}}
\theta\left(  s,x\right)  dxds+%
{\displaystyle\int\nolimits_{\Omega}}
C_{f}\left(  x\right)  \theta\left(  T,x\right)  dx\nonumber\\
&  +%
{\displaystyle\sum\nolimits_{i=0}^{k}}
{\displaystyle\int\nolimits_{\Omega}}
c_{i}\left(  x\right)  \left(  1-v_{i}\left(  x\right)  \right)  \theta\left(
\tau_{i},x\right)  dx, \label{eq:J_MainDef}%
\end{align}
where $\tau_{k}<T<\tau_{k+1}$; $\forall i\in%
\mathbb{N}
,$ $v_{i}\in L^{\infty}\left(  \Omega;\left[  0,1\right]  \right)  ,$
$c_{i}\in L^{\infty}\left(  \Omega;%
\mathbb{R}
_{+}\right)  $; and $\forall x\in\Omega$, $c(x)=\left(  c_{i}(x)\right)
_{i\in%
\mathbb{N}
}$ is a $%
\mathbb{R}
_{+}^{\ast}-$valued sequence of cost ratios related to the use of control.
$C_{f}$ is also a cost related to the final inhibition rate.

Before proving the existence of an optimal strategy, we recall the following
lemma stated in \cite{anita,brezis}.

\begin{lemma}
\label{lemma:Mazur} (Mazur)

Let $\left(  x_{n}\right)  _{n\in%
\mathbb{N}
}$ be a sequence taking its values in a real Banach space $X$ that is weakly
convergent to $x\in X$. Then there exists a $X-$valued sequence $\left(
y_{n}\right)  _{n\in%
\mathbb{N}
}$ which converges strongly to $x$ and such that $\forall n\in%
\mathbb{N}
,$ $y_{n}$ is an element of the convex hull of $\left(  x_{n}\right)  _{n\in%
\mathbb{N}
}$.
\end{lemma}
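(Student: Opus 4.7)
The plan is to derive Mazur's lemma from the Hahn--Banach separation theorem, leaning on the classical fact that in a Banach space the norm closure and the weak closure of a convex set coincide. Writing $C$ for the convex hull of $\{x_n\}_{n\in\mathbb{N}}$, the task reduces to showing $x\in\overline{C}^{\|\cdot\|}$, the norm closure of $C$: once this is known, choosing $y_n\in C$ with $\|y_n-x\|<1/n$ produces the desired sequence, and each $y_n$ automatically lies in the convex hull of the $x_m$'s.

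First I would observe that $x$ lies in the weak closure of $C$, since by hypothesis every weak neighborhood of $x$ eventually contains some $x_n$, and each $x_n$ is trivially an element of $C$. The crux is to upgrade weak closure to norm closure. Arguing by contradiction, suppose $x\notin\overline{C}^{\|\cdot\|}$. The set $\overline{C}^{\|\cdot\|}$ is closed and convex and $\{x\}$ is compact and convex, so the geometric Hahn--Banach separation theorem yields $f\in X^{\ast}$ and $\alpha\in\mathbb{R}$ with $f(x)<\alpha\leq f(y)$ for every $y\in\overline{C}^{\|\cdot\|}$. In particular $f(x_n)\geq\alpha$ for all $n$, since $x_n\in C\subseteq\overline{C}^{\|\cdot\|}$. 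But weak convergence $x_n\rightharpoonup x$ implies $f(x_n)\to f(x)$, and passing to the limit gives $f(x)\geq\alpha$, contradicting the strict inequality $f(x)<\alpha$. Hence $x\in\overline{C}^{\|\cdot\|}$, as required.

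The only substantive ingredient is the geometric Hahn--Banach theorem, and I expect this to be the main obstacle only in the sense that one cannot bypass it: in infinite dimensions the weak and norm topologies genuinely differ, and Mazur's lemma is essentially the statement that convexity collapses this discrepancy. Modulo Hahn--Banach the argument is short and purely formal, with no calculation required beyond verifying that the extracted $y_n$ meet the convex hull condition, which is built into their construction.
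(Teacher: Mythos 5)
Your proof is correct: the Hahn--Banach separation argument showing that the norm closure and weak closure of the convex hull coincide, followed by extracting $y_n$ with $\|y_n-x\|<1/n$, is the standard and complete proof of Mazur's lemma. Note that the paper itself offers no proof of this statement --- it simply recalls the lemma from the cited references \cite{anita,brezis} --- so there is nothing to compare against; your argument is exactly the classical one found in those sources.
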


\begin{theorem}
\label{ExistenceOfAnOptimalStrategyv}There is an optimal strategy $v^{\ast
}=\left(  v_{i}^{\ast}\right)  _{i\in%
\mathbb{N}
}$ which minimizes $J(v)$.
\end{theorem}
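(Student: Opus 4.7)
The plan is to mimic Proposition~\ref{prop:exist} but replace the scalar compactness of $[0,1]^{k+1}$ by an infinite-dimensional weak/weak-$*$ compactness argument in $L^{\infty}(\Omega;[0,1])^{k+1}$. Since $\theta\in[0,1]$ by Proposition~\ref{boundednessIPDE} and $v_i\in[0,1]$ a.e., one has $0\le J(v)\le T|\Omega|+\|C_f\|_{L^1}+\sum_i\|c_i\|_{L^1}$, so $J^{\ast}:=\inf_v J(v)$ is finite and a minimizing sequence $\{v^n\}=\{(v_i^n)_{0\le i\le k}\}$ exists with $J(v^n)\to J^{\ast}$.

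First I would extract a subsequence in a suitable weak topology. Each $v_i^n$ lies in the closed unit ball of $L^{\infty}(\Omega)=(L^1(\Omega))^{\ast}$, so by Banach--Alaoglu one can pass to a subsequence (still indexed by $n$) with $v_i^n\rightharpoonup^{\ast} v_i^{\ast}$ in $L^{\infty}(\Omega)$, where $v_i^{\ast}\in L^{\infty}(\Omega;[0,1])$ because this set is convex and strongly (hence weakly-$*$) closed. Let $\theta^n$ be the weak solution associated with $v^n$. Proposition~\ref{boundednessIPDE} together with standard energy estimates (testing the weak form against $\theta^n$ and exploiting the coercivity assumption \textbf{(H5)}) provides a uniform bound on $\{\theta^n\}$ in $L^{\infty}(0,T;L^2(\Omega))\cap L^2(0,T;H^1(\Omega))$, with $\partial_t\theta^n$ bounded piecewise in $L^2(\tau_i,\tau_{i+1};H^1(\Omega)')$. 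Aubin--Lions then yields a further subsequence with $\theta^n\to\theta^{\ast}$ strongly in $L^2((0,T)\times\Omega)$ and weakly in $L^2(0,T;H^1(\Omega))$.

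The hard part will be identifying $\theta^{\ast}$ as the solution associated with $v^{\ast}$: the pulse updates $\theta^n(\tau_i^+,\cdot)=v_i^n\,\theta^n(\tau_i,\cdot)$ are bilinear in the weakly-converging data, and the map $v\mapsto\theta^v$ is not obviously weak-$*$ continuous. Here Mazur's lemma (Lemma~\ref{lemma:Mazur}) is the key tool: viewing $v^n\rightharpoonup v^{\ast}$ as weak convergence in the Hilbert space $L^2(\Omega)^{k+1}$, Mazur produces convex combinations $\tilde v^n=\sum_{m\ge n}\lambda_m^n v^m$ that converge \emph{strongly} in $L^2(\Omega)^{k+1}$ to $v^{\ast}$. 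By the continuous dependence of solutions on controls noted after Theorem~\ref{ExistenceGlobalSolutionIPDE}, $\theta^{\tilde v^n}\to\theta^{v^{\ast}}$ strongly on each subinterval between pulses, and uniqueness of weak-$L^2$ limits forces $\theta^{\ast}=\theta^{v^{\ast}}$. A subtlety to check is that the pulse times $\tau_i^n$ (defined via the $\sigma^{\ast}$ threshold) stabilize for $n$ large; this follows from the strong convergence of $\theta^n$ and the fact that only finitely many candidate times $t_k$ lie in $[0,T]$.

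Finally, passing to the limit in $J(v^n)\to J^{\ast}$ is routine once $\theta^{\ast}=\theta^{v^{\ast}}$ is known: the running cost $\int_0^T\int_{\Omega}\theta^n$ and terminal cost $\int_{\Omega}C_f\theta^n(T)$ converge by strong $L^2$ convergence of $\theta^n$ (respectively of its time trace), and each pulse cost $\int_{\Omega}c_i(1-v_i^n)\theta^n(\tau_i)\,dx$ is a weak-strong pairing of the weakly-converging $v_i^n$ against the strongly-converging $\theta^n(\tau_i,\cdot)$, hence passes to the limit. Therefore $J(v^{\ast})=\lim_n J(v^n)=J^{\ast}$, and $v^{\ast}=(v_i^{\ast})_{0\le i\le k}$ (extended arbitrarily for $i>k$) is the desired optimal strategy.
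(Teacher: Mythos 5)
Your overall route is the same as the paper's: bound $J$, take a minimizing sequence, extract a weak-$\ast$ convergent subsequence in $\left(L^{\infty}(\Omega;[0,1])\right)^{k+1}$, and invoke Mazur's lemma (Lemma~\ref{lemma:Mazur}) to upgrade to strong convergence of convex combinations before appealing to continuity. You are in fact more candid than the paper about where the difficulty lies --- the control-to-state map $v\mapsto\theta^{v}$ is not weak-$\ast$ continuous because the pulse updates $\theta(\tau_i^{+},\cdot)=v_i\,\theta(\tau_i,\cdot)$ make $\theta^{v}$ depend on products of the $v_i$'s --- and your remarks on the a priori bounds, Aubin--Lions compactness, and the stabilization of the pulse times $\tau_i$ are sensible additions that the paper omits entirely.

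However, the step you use to close that gap does not work. You have two unrelated sequences of states: $\theta^{v^{n}}$, which (along a subsequence) converges strongly in $L^{2}$ to some $\theta^{\ast}$, and $\theta^{\tilde v^{n}}$, which converges to $\theta^{v^{\ast}}$ by continuous dependence on the (strongly convergent) Mazur combinations $\tilde v^{n}$. ``Uniqueness of weak limits'' gives nothing here, because $\theta^{\tilde v^{n}}$ is \emph{not} the corresponding convex combination of the $\theta^{v^{m}}$: the solution map is nonlinear (indeed polynomial in the $v_i$'s through iterated pulse updates), so convexifying the controls does not convexify the states, and there is no identity linking $\theta^{\tilde v^{n}}$ to the sequence $\theta^{v^{n}}$. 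Concretely, if $v_i^{n}=\chi_{E_n}$ with $E_n$ oscillating so that $v_i^{n}\rightarrow 1/2$ weak-$\ast$, then products such as $v_1^{n}v_2^{n}=\chi_{E_n}\rightarrow 1/2\neq 1/4=v_1^{\ast}v_2^{\ast}$, so $\theta^{v^{n}}$ need not converge to $\theta^{v^{\ast}}$ at all; your two limits $\theta^{\ast}$ and $\theta^{v^{\ast}}$ can genuinely differ. What is actually needed is some form of weak lower semicontinuity of $v\mapsto J(v)$ (e.g.\ via convexity of $J$ along the relevant directions, or a relaxation argument exploiting that $J$ is affine in each $v_i$ separately). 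To be fair, the paper's own proof is no better on this point --- it passes from ``$\tilde v^{n}\rightarrow v^{\ast}$ strongly and $J$ is continuous'' to ``$J(v^{\ast})=J^{\ast}$'', which tacitly requires $J(\tilde v^{n})\rightarrow J^{\ast}$ and hence convexity of $J$, never verified --- but your attempted repair introduces an additional invalid inference rather than fixing the underlying issue.
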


\begin{proof}
The problem can be reduced to finding $v^{\ast}=\left(  v_{i}^{\ast}\right)
_{0\leq i\leq k}\in\left[  0,1\right]  ^{k+1}$, since the terms $v_{i}$ with
$i>k$ have no effect on $J$. Note that
\[
0\leq J(v)\leq\left\vert \Omega\right\vert \left(  1+T+%
{\displaystyle\sum\nolimits_{i=0}^{k}}
\left\Vert c_{i}\right\Vert _{L^{\infty}\left(  \Omega;%
\mathbb{R}
\right)  }\right)  .
\]
Let $J^{\ast}=\underset{v}{\inf}\,J\left(  v\right)  $. There is a $\left(
L^{\infty}\left(  \Omega;\left[  0,1\right]  \right)  \right)  ^{k+1}-$valued
sequence $\left\{  v^{n}\right\}  _{n\in%
\mathbb{N}
}$ such that the sequence $\left\{  J\left(  v^{n}\right)  \right\}
_{n\in%
\mathbb{N}
}$ converges to $J^{\ast}$. The sequence $\left\{  v^{n}\right\}  _{n\in%
\mathbb{N}
}$ is bounded and there is a subsequence $\left\{  v^{n_{m}}\right\}  $ which
converges weakly to a strategy $v^{\ast}\in\left(  L^{\infty}\left(
\Omega;\left[  0,1\right]  \right)  \right)  ^{k+1}$. Lemma \ref{lemma:Mazur}
implies there is a sequence $\left\{  \underline{v}^{n}\right\}  _{n\in%
\mathbb{N}
}$ in $conv\left(  \left\{  v^{n}\right\}  _{n\in%
\mathbb{N}
}\right)  \subset\left(  L^{\infty}\left(  \Omega;\left[  0,1\right]  \right)
\right)  ^{k+1}$ which converges strongly to $v^{\ast}$. Since $J(v)$ is
continuous, it follows that $J\left(  v^{\ast}\right)  =J^{\ast}$.

In the remainder of this subsection we characterize the optimal control
strategy in order to compute it. The solution of $\left(  \ref{ModelPulse1}%
\right)  -\left(  \ref{ModelPulse4}\right)  $ is continuous with respect to
control strategies $\left(  v_{i}\right)  $. Now we add the assumption that
the solution of $\left(  \ref{ModelPulse1}\right)  -\left(  \ref{ModelPulse4}%
\right)  $ is G\^{a}teaux differentiable with respect to $v=\left\{
v_{i}\right\}  _{0\leq i\leq k}\in\left(  L^{\infty}\left(  \Omega;\left[
0,1\right]  \right)  \right)  ^{k+1}$. Let $\theta_{\underline{v}}$ be the
solution of $\left(  \ref{ModelPulse1}\right)  -\left(  \ref{ModelPulse4}%
\right)  $ associated to a chosen control strategy $\underline{v}$ and let
$z_{v}$ be its directional derivative, $z_{v}=D_{v}\theta_{\underline{v}}$.
%
In analogy to the derivation of (\ref{eq:deriv_z}) and
(\ref{eq:zv_tau_averaged}) in the previous section, we may show that
$\forall x\in\Omega$ and $\forall i\in%
\mathbb{N}
^{\ast},$%
\begin{align}
z_{v}\left(  0^{+},x\right)   &  =v_{0}(x)\rho(x);\label{EqZv2PDE}\\
z_{v}\left(  \tau_{i}^{+},x\right)   &  =v_{i}\left(  x\right)  \theta
_{\underline{v}}\left(  \tau_{i},x\right)  +\underline{v}_{i}\left(
x\right)  z_{v}\left(  \tau_{i},x\right)  ; \label{EqZv3PDE}%
\end{align}
and $\forall t\in%
\mathbb{R}
_{+}^{\ast}\setminus\left(  \tau_{i}\right)  _{i\in%
\mathbb{N}
}$
\begin{align}
\partial_{t}z_{v}\left(  t,x\right)   &  =-\alpha\left(  t,x\right)
z_{v}\left(  t,x\right)  /\left(  1-\sigma u\left(  t,x\right)  \right)
\label{EqZv1PDE}\\
&  +\operatorname{div}\left(  A\left(  t,x\right)  \nabla z_{v}\left(
t,x\right)  \right)  ,\text{ }\nonumber
\end{align}

\end{proof}

As in the previous section, we may define $J_{v}(\underline{v})$ as the directional
derivative of the cost functional $J(\underline{v})$ as defined in (\ref{eq:J_MainDef}). A
straightforward computation yields
\begin{align}
J_{v}(\underline{v})  &  =%
{\displaystyle\int\nolimits_{\Omega}}
C_{f}\left(  x\right)  z_{v}\left(  T,x\right)  dx+%
{\displaystyle\int\nolimits_{0}^{T}}
{\displaystyle\int\nolimits_{\Omega}}
z_{v}\left(  s,x\right)  dxds\nonumber\\
&  +%
{\displaystyle\sum\nolimits_{i=0}^{k}}
{\displaystyle\int\nolimits_{\Omega}}
c_{i}\left(  x\right)  \left(  \left(  1-\underline{v}_{i}\left(  x\right)
\right)  z_{v}\left(  \tau_{i},x\right)  \right. \nonumber\\
&  \left.  -v_{i}\left(  x\right)  \theta_{\underline{v}}\left(  \tau_{i},x\right)
\right)  dx. \label{J_vMain}%
\end{align}

We may then state the following analogy of Proposition
\ref{eq:OptStrategyAverage}:

\begin{theorem}
\label{MainTheoremPulse} If $\underline{v}=v^{\ast}$ is an optimal strategy
then $\forall v\in V,$
\[%
{\displaystyle\sum\nolimits_{i=0}^{k}}
{\displaystyle\int\nolimits_{\Omega}}
\left(  p_{v^{\ast}}\left(  \tau_{i}^{+},x\right)  -c_{i}\left(  x\right)
\right)  v_{i}\left(  x\right)  \theta_{v^{\ast}}\left(  \tau_{i},x\right)
dx\geq0,
\]
where
\begin{align*}
V=  &  \left\{  v\in\left(  L^{\infty}\left(  \Omega;%
\mathbb{R}
\right)  \right)  ^{k+1};\right. \\
&  ~~\left.  \exists\varepsilon>0\,|\,v^{\ast}+\varepsilon v\in\left(
L^{\infty}\left(  \Omega;\left[  0,1\right]  \right)  \right)  ^{k+1}\right\}
,
\end{align*}
and $p_{v^{\ast}}$ is solution of the following adjoint problem $(\forall
x\in\Omega)$:
\begin{align}
\partial_{t}p_{v^{\ast}}  &  =\alpha\left(  t,x\right)  p_{v^{\ast}}/\left(
1-\sigma u\left(  t,x\right)  \right) \nonumber\\
&  \qquad\quad-\operatorname{div}\left(  A\left(  t,x\right)  \nabla
p_{v^{\ast}}\left(  t,x\right)  \right)  -1,\nonumber\\
&  \qquad t \in\left]  0,T\right]  \setminus\left\{  \tau_{i}\right\}  ,\text{
}i\in\left[  0,k\right]  \cap%
\mathbb{N}
^{\ast}\text{ };\label{EqAdjp1PDE}\\
p_{v^{\ast}}\left(  T,x\right)   &  =C_{f}\left(  x\right)
,\label{EqAdjp2PDE}\\
\text{ }p_{v^{\ast}}\left(  \tau_{i},x\right)   &  =v_{i}^{\ast}\left(
x\right)  p_{v^{\ast}}\left(  \tau_{i}^{+},x\right)  +c_{i}\left(  x\right)
\left(  1-v_{i}^{\ast}\left(  x\right)  \right)  ;
\end{align}
and
\begin{equation}
\left\langle A\left(  t,x\right)  \nabla p_{v^{\ast}}\left(  t,x\right)
,n\left(  x\right)  \right\rangle =0,\text{ on }%
\mathbb{R}
_{+}^{\ast}\times\partial\Omega. \label{EqAdjp3PDE}%
\end{equation}

\end{theorem}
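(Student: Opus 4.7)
The plan is to mimic the argument used for Proposition~\ref{eq:OptStrategyAverage}, replacing ordinary integration by parts in time with a combination of time integration by parts and spatial integration by parts (Green's identity), exploiting the no-flux boundary conditions to kill boundary terms on $\partial\Omega$. First I would establish that the adjoint system $(\ref{EqAdjp1PDE})$--$(\ref{EqAdjp3PDE})$ admits a unique piecewise absolutely continuous $H^1(\Omega)$-valued solution $p_{v^{\ast}}$; this is a linear backward parabolic problem with impulsive jumps run backwards in time from $T$, with a bounded forcing term, and its well-posedness follows by the same spectral/semigroup machinery as in the proof of Theorem~\ref{ExistenceGlobalSolutionIPDE} applied to the time-reversed equation, piecing together solutions across the impulse times using the prescribed jump condition $p_{v^{\ast}}(\tau_i,x)=v_i^{\ast}(x)p_{v^{\ast}}(\tau_i^+,x)+c_i(x)(1-v_i^{\ast}(x))$.

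Next I would compute, on each subinterval $]\tau_i,\tau_{i+1}]$,
\begin{equation*}
\int_{\tau_i}^{\tau_{i+1}}\!\!\int_\Omega p_{v^{\ast}}\,\partial_t z_v\,dx\,dt
\end{equation*}
in two ways. On the one hand, integration by parts in $t$ produces the boundary-in-time terms $\int_\Omega p_{v^{\ast}}(\tau_{i+1})z_v(\tau_{i+1})\,dx-\int_\Omega p_{v^{\ast}}(\tau_i^+)z_v(\tau_i^+)\,dx$ together with $-\int_{\tau_i}^{\tau_{i+1}}\!\int_\Omega (\partial_t p_{v^{\ast}})z_v\,dx\,dt$. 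On the other hand, substituting $\partial_t z_v$ from $(\ref{EqZv1PDE})$ and applying Green's identity to the $\operatorname{div}(A\nabla z_v)$ term gives $\int\int -\alpha p_{v^{\ast}}z_v/(1-\sigma u)\,dx\,dt-\int\int\langle A\nabla z_v,\nabla p_{v^{\ast}}\rangle\,dx\,dt$, where the boundary contribution vanishes because of the no-flux condition on $z_v$. Using the adjoint equation $(\ref{EqAdjp1PDE})$ and a second Green's identity applied to $\operatorname{div}(A\nabla p_{v^{\ast}})$ (whose boundary term vanishes by $(\ref{EqAdjp3PDE})$), the two gradient integrals cancel and the $\alpha p z/(1-\sigma u)$ terms cancel, leaving exactly
\begin{equation*}
\int_0^T\!\!\int_\Omega z_v\,dx\,ds=-\Bigl[\int_\Omega p_{v^{\ast}}(T)z_v(T)-p_{v^{\ast}}(\tau_k^+)z_v(\tau_k^+)\,dx+\sum_{i=0}^{k-1}\!\int_\Omega\!\bigl(p_{v^{\ast}}(\tau_{i+1})z_v(\tau_{i+1})-p_{v^{\ast}}(\tau_i^+)z_v(\tau_i^+)\bigr)dx\Bigr].
\end{equation*}

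I would then insert this identity into expression $(\ref{J_vMain})$ for $J_v(\underline{v})$, use the terminal condition $p_{v^{\ast}}(T,x)=C_f(x)$ to absorb the $C_f z_v(T)$ term, and use the jump condition on $p_{v^{\ast}}$ at each $\tau_i$ together with the jump $(\ref{EqZv3PDE})$ on $z_v$ to collapse the telescoping sum into the single expression $\sum_{i=0}^k\int_\Omega(p_{v^{\ast}}(\tau_i^+,x)-c_i(x))v_i(x)\theta_{v^{\ast}}(\tau_i,x)\,dx$; this is precisely the analogue of $(\ref{eq:CondOptStrategy0})$. Finally, if $v^{\ast}$ is optimal, then for any $v\in V$ and all sufficiently small $\varepsilon>0$ we have $v^{\ast}+\varepsilon v\in(L^\infty(\Omega;[0,1]))^{k+1}$ so $J(v^{\ast}+\varepsilon v)\geq J(v^{\ast})$, and dividing by $\varepsilon$ and letting $\varepsilon\to 0^+$ gives $J_v(v^{\ast})\geq 0$, which is the claimed inequality.

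The main obstacle will be justifying the two Green's-identity steps rigorously at the level of the weak solutions: $z_v$ and $p_{v^{\ast}}$ are only guaranteed to lie in $H^1(\Omega)$ at each time, so the cancellation of the $\int\langle A\nabla z_v,\nabla p_{v^{\ast}}\rangle$ terms must really be performed at the level of the weak formulations, taking $p_{v^{\ast}}$ (respectively $z_v$) as a test function in the weak equation for $z_v$ (respectively $p_{v^{\ast}}$); symmetry of $A$ and assumption \textbf{(H5)} ensure the bilinear form is well-defined and symmetric, so the two weak identities really do match. The bookkeeping of the impulsive jumps is routine once the telescoping structure is in place, and the final sign argument at an optimum is identical to the one at the end of the proof of Proposition~\ref{eq:OptStrategyAverage}.
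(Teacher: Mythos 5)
Your proposal is correct and follows essentially the same route as the paper's own proof: existence and uniqueness of the adjoint solution by the argument of Theorem~\ref{ExistenceGlobalSolutionIPDE}, evaluation of $\int p_{v^{\ast}}\,\partial_t z_v$ in two ways (by parts in time versus substituting (\ref{EqZv1PDE})), spatial integration over $\Omega$ with the no-flux conditions killing boundary terms, substitution into (\ref{J_vMain}), and the standard first-order optimality inequality. Your added care about performing the Green's-identity cancellations at the level of the weak formulations is a refinement of detail rather than a different argument.
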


\begin{proof}
The proof resembles that of Proposition~\ref{eq:OptStrategyAverage}. We
first argue as in Proposition \ref{ExistenceGlobalSolutionIPDE} that there is
a unique absolutely solution $p_{v^{\ast}}$ to the problem $\left(
\ref{EqAdjp1PDE}\right)  -\left(  \ref{EqAdjp3PDE}\right)  $. We may then
evaluate
\[%
{\displaystyle\int\nolimits_{0}^{T}}
p_{v^{\ast}}\left(  s,x\right)  \partial_{t}z_{v}\,ds+\sum_{i=0}%
^{k-1}{\displaystyle\int\nolimits_{\tau_{i}}^{\tau_{i+1}}}p_{v^{\ast}}\left(
s,x\right)  \partial_{t}z_{v}\,ds
\]
by parts, and alternatively by using expression (\ref{EqZv1PDE}) for
$\partial_{t}z_{v}$. After equating the two expressions, integrating over
$\Omega$, and rearranging we obtain
\begin{align*}
&
{\displaystyle\int\nolimits_{0}^{T}}
{\displaystyle\int\nolimits_{\Omega}}
z_{v}\left(  s,x\right)  dxds\\
&  =%
{\displaystyle\int\nolimits_{\Omega}}
p_{v^{\ast}}\left(  T,x\right)  z_{v}\left(  T,x\right)  dx\\
&  -%
{\displaystyle\sum\nolimits_{i=0}^{k-1}}
{\displaystyle\int\nolimits_{\Omega}}
p_{v^{\ast}}\left(  \tau_{i+1},x\right)  z_{v}\left(  \tau_{i+1},x\right)
dx\\
&  +%
{\displaystyle\sum\nolimits_{i=0}^{k}}
{\displaystyle\int\nolimits_{\Omega}}
p_{v^{\ast}}\left(  \tau_{i}^{+},x\right)  z_{v}\left(  \tau_{i}^{+},x\right)
dx,
\end{align*}
and substituting this expression into expression (\ref{J_vMain}) for $J_{v}(v^{\ast})$,
we find after simplification that
\[
J_{v}(v^{\ast})=%
{\displaystyle\sum\nolimits_{i=0}^{k}}
{\displaystyle\int\nolimits_{\Omega}}
\left(  p_{v^{\ast}}\left(  \tau_{i}^{+},x\right)  -c_{i}\left(  x\right)
\right)  v_{i}\left(  x\right)  \theta_{v^{\ast}}\left(  \tau_{i},x\right)
dx.
\]
For an arbitrary but fixed $v\in V$ and $\varepsilon>0$ sufficiently small we
have $J\left(  v^{\ast}+\varepsilon v\right)  \geq J\left(  v^{\ast}\right)  $
and consequently (as before) $J_{v}(v^{\ast})\geq0$.

\end{proof}

Consider in $\left(  L^{\infty}\left(  \Omega;%
\mathbb{R}
\right)  \right)  ^{k+1}\supset\left(  L^{\infty}\left(  \Omega;\left[
0,1\right]  \right)  \right)  ^{k+1}$ the inner product%
\[
\left\langle v_{1},v_{2}\right\rangle =%
{\displaystyle\sum\nolimits_{i=0}^{k}}
{\displaystyle\int\nolimits_{\Omega}}
v_{1}^{i}\left(  x\right)  v_{2}^{i}\left(  x\right)  ds
\]
with its associated norm
\[
\left\Vert v\right\Vert _{2}=%
{\displaystyle\sum\nolimits_{i=0}^{k}}
\left(
{\displaystyle\int\nolimits_{\Omega}}
\left(  v_{1}^{i}\left(  x\right)  \right)  ^{2}ds\right)  ^{1/2}.
\]
Note that $\forall v\in\left(  L^{\infty}\left(  \Omega;%
\mathbb{R}
\right)  \right)  ^{k+1},$
\[
\left\Vert v\right\Vert _{2}\leq\sqrt{\left\vert \Omega\right\vert }\times%
{\displaystyle\sum\nolimits_{i=0}^{k}}
\underset{x\in\Omega}{ess\sup}\left\vert v^{i}\left(  x\right)  \right\vert .
\]
This allows us to find $v^{\ast}$ using the topology given by $\left\Vert
.\right\Vert _{2}$. Since $J_{v}(v^{\ast})$ is a continuous linear operator
acting on $v$, there is a unique $\overline{v}\in\left(  L^{\infty}\left(
\Omega;%
\mathbb{R}
\right)  \right)  ^{k+1}$ such that%

\[
\left\langle \overline{v},v\right\rangle =J_{v}(v^{\ast}).
\]
From Theorem~\ref{MainTheoremPulse} we find $\forall j\in\left[  0,k\right]
\cap%
\mathbb{N}
,$ $\forall x\in\Omega,$
\[
\overline{v}_{j}\left(  x\right)  =\left(  p_{v^{\ast}}\left(  \tau_{j}%
^{+},x\right)  -c_{j}\left(  x\right)  \right)  \theta_{v^{\ast}}\left(
\tau_{j},x\right)  .
\]
This identification is useful in the implementation of the gradient method.

We close this section with a proposition which, like Proposition~\ref{eq:OptStrategyAverage2}, enables the direct computation of the optimal strategy when $\sigma^{\ast} = 0$.
\begin{proposition}\label{eq:OptStrategyPulseFull}
If $\underline{v}=v^{\ast}$ is an optimal
strategy then $\forall x\in\Omega,v^{\ast}\left(  x\right)  \in\left\{
0,1\right\}  ^{k}$. Moreover, there exists an optimal strategy $v_{i}^{\ast}$ such that
\[
v_{i}^{\ast}\left(  x\right)  =\left\{
\begin{array}
[c]{l}%
0,\text{ if }p_{v^{\ast}}\left(  \tau_{i}^{+},x\right)  >c_{i}\text{ }\\
\text{and }\left\Vert \theta\left(  \tau_{i},.\right)  \right\Vert
_{L^{2}\left(  \Omega\right)  }\geq\sigma^{\ast}\left\vert \Omega\right\vert,
\\
\\
1,\text{ otherwise}.%
\end{array}
\right.
\]

\end{proposition}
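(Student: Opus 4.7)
The plan is to lift the bang--bang argument of Proposition~\ref{eq:OptStrategyAverage2} from the scalar setting to a pointwise-in-$x$ setting. By Theorem~\ref{ExistenceOfAnOptimalStrategyv} an optimal $v^{\ast}$ exists, and by Theorem~\ref{MainTheoremPulse} it satisfies the variational inequality
\[
\sum_{i=0}^{k}\int_{\Omega}\bigl(p_{v^{\ast}}(\tau_{i}^{+},x)-c_{i}(x)\bigr)\,v_{i}(x)\,\theta_{v^{\ast}}(\tau_{i},x)\,dx\ge 0
\]
for every admissible direction $v\in V$. The idea is to specialize these directions to indicator functions of measurable sets in order to convert the single integral inequality into pointwise sign conditions on $v_{i}^{\ast}(x)$.

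Fix $i\in\{0,\dots,k\}$ and a measurable $E\subseteq\Omega$, and set $v_{j}\equiv 0$ for $j\neq i$. The direction $v_{i}=\chi_{E}$ belongs to $V$ as soon as $\mathrm{ess\,sup}_{E}\,v_{i}^{\ast}<1$, and $v_{i}=-\chi_{E}$ belongs to $V$ as soon as $\mathrm{ess\,inf}_{E}\,v_{i}^{\ast}>0$. Exhausting $\{v_{i}^{\ast}<1\}$ and $\{v_{i}^{\ast}>0\}$ by such sets yields, almost everywhere in $\Omega$,
\[
\bigl(p_{v^{\ast}}(\tau_{i}^{+},x)-c_{i}(x)\bigr)\theta_{v^{\ast}}(\tau_{i},x)\ge 0\ \text{on}\ \{v_{i}^{\ast}<1\},
\]
\[
\bigl(p_{v^{\ast}}(\tau_{i}^{+},x)-c_{i}(x)\bigr)\theta_{v^{\ast}}(\tau_{i},x)\le 0\ \text{on}\ \{v_{i}^{\ast}>0\}.
\]
On the set where $\theta_{v^{\ast}}(\tau_{i},x)>0$ these two inequalities force $v_{i}^{\ast}(x)=0$ wherever $p_{v^{\ast}}(\tau_{i}^{+},x)>c_{i}(x)$ and $v_{i}^{\ast}(x)=1$ wherever $p_{v^{\ast}}(\tau_{i}^{+},x)<c_{i}(x)$. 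On the ambiguity sets $\{p_{v^{\ast}}(\tau_{i}^{+},x)=c_{i}(x)\}$ and $\{\theta_{v^{\ast}}(\tau_{i},x)=0\}$ the first variation vanishes, so I may redefine $v_{i}^{\ast}(x):=1$ there without altering $J(v^{\ast})$; when the global threshold $\|\theta_{v^{\ast}}(\tau_{i},\cdot)\|_{L^{2}(\Omega)}\ge\sigma^{\ast}|\Omega|$ is not attained at all, no impulsive intervention occurs at index $i$ and the convention $v_{i}^{\ast}=1$ applies by default.

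The main obstacle is consistency with the implicit definition of the intervention sequence $(\tau_{i})$, which depends on the state trajectory through the $L^{2}$-threshold; redefining $v_{i}^{\ast}$ on ambiguity sets could a priori shift or eliminate later intervention times. This is handled by a monotonicity remark parallel to the one in Proposition~\ref{eq:OptStrategyAverage2}: a standard comparison argument for (\ref{weak_1})--(\ref{weak_3}), based on the positivity of $\alpha$ and Proposition~\ref{boundednessIPDE}, shows that enlarging the values of $v_{i}^{\ast}(x)$ only increases the subsequent trajectory $\theta_{v^{\ast}}$ pointwise, and thus preserves any previously active threshold condition defining later $\tau_{j}$ with $j>i$. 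Combining this with the pointwise characterization above delivers the claimed bang--bang structure, and when $\sigma^{\ast}=0$ the adjoint system (\ref{EqAdjp1PDE})--(\ref{EqAdjp3PDE}) can be integrated backwards from $t=T$ jointly with the state equation, making $v^{\ast}$ directly computable exactly as in the averaged model.
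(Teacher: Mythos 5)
Your argument is correct and is essentially the paper's own approach: the paper's proof of this proposition is literally a one-line reference to the argument of Proposition~\ref{eq:OptStrategyAverage2}, and your write-up carries out exactly that generalization, supplying the standard localization step (testing the variational inequality of Theorem~\ref{MainTheoremPulse} with directions $\pm\chi_{E}$ and invoking a.e.\ density of such sets) together with the same monotonicity remark used there to keep the threshold times consistent after modifying $v_{i}^{\ast}$ on the ambiguity sets.
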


\begin{proof}
The proof is similar to that of Proposition
\ref{eq:OptStrategyAverage2}.
\end{proof}

It follows from Proposition~\ref{eq:OptStrategyPulseFull} that the optimal strategy may be found when $\sigma^{\ast}=0$ by backwards-solving for $p$.

\section{Optimal control based on both continuous and pulse
stategies\label{SectionCOUV}}

In this section we consider the possibility of a continuous control strategy
employed along with the pulse control. As in the previous section, we restrict
the time of study to the set $\left[  0,T\right]  $ where $T\in\left[
\tau_{k},\tau_{k+1}\right[  $ corresponds practically to an annual production period.

\subsection{Averaged model with mixed strategy\label{COUVAgragatedModel}}

In this subsection we demonstrate existence and provide a characterization of
a strategy $\left(  u^{\ast},v^{\ast}\right)  =\left(  u^{\ast},\left(
v_{i}^{\ast}\right)  _{i\in%
\mathbb{N}
}\right)  $ which minimizes the following cost functional:%
\begin{align*}
J\left(  u,v\right)   &  =%
{\displaystyle\int\nolimits_{0}^{T}}
\Theta\left(  s\right)  +C\left(  s\right)  u\left(  s\right)  ds\\
&  +%
{\displaystyle\sum\nolimits_{i=0}^{k}}
c_{i}\left(  1-v_{i}\right)  \Theta\left(  \tau_{i}\right)  +C_{f}%
\Theta\left(  T\right)  ,
\end{align*}
where $C\in L_{loc}^{\infty}\left(
\mathbb{R}
_{+};%
\mathbb{R}
_{+}\right)  $ is almost everywhere positive and $c=\left(  c_{i}\right)
_{i\in%
\mathbb{N}
}\subset%
\mathbb{R}
_{+}^{\ast}$. $C$ and $c$ are time dependent cost ratios related to the use of
the control strategy. It is evident that only the $k+1$ first terms of $v$ and
$c$ are significant. $C_{f}$ is a cost related to the final inhibition rate.
We also require that $u\in L^{\infty}\left(  \left[  0,T\right]  ;\left[
0,1\right]  \right)  $.

Existence of an optimal strategy is guaranteed by the following proposition,
which uses virtually the same argument as
Theorem~\ref{ExistenceOfAnOptimalStrategyv}.

\begin{proposition}
\label{ExistenceOfAnOptimalStrategyuv}There is an optimal strategy $\left(
u^{\ast},v^{\ast}\right)  =\left(  u^{\ast},\left(  v_{i}^{\ast}\right)
_{i\in%
\mathbb{N}
}\right)  $ which minimizes $J$.
\end{proposition}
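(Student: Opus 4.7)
The strategy is to mirror the proof of Theorem~\ref{ExistenceOfAnOptimalStrategyv}, now treating $(u,v)$ as a point in $L^{\infty}\!\left([0,T];[0,1]\right)\times[0,1]^{k+1}$, rather than in $[0,1]^{k+1}$ alone.

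First I would show that $J$ is bounded on the admissible set. Since $\Theta(s)\in[0,1]$ by Proposition~\ref{boundednessIDE}, and $u,v_i\in[0,1]$,
\begin{equation*}
0\le J(u,v)\le 1+T+\int_0^T C(s)\,ds+\sum_{i=0}^k c_i,
\end{equation*}
so $J^{\ast}\equiv\inf J$ is well-defined and finite. I would then select a minimizing sequence $\{(u^n,v^n)\}$ with $J(u^n,v^n)\to J^{\ast}$.

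Second, I would extract a convergent subsequence in two steps. Compactness of the finite-dimensional cube $[0,1]^{k+1}$ furnishes a subsequence with $v^n\to v^{\ast}\in[0,1]^{k+1}$. For $\{u^n\}$, which is bounded in $L^{\infty}([0,T])$, the Banach--Alaoglu theorem provides a further subsequence converging in the weak-$\ast$ topology to some $u^{\ast}\in L^{\infty}([0,T];[0,1])$. Because $[0,T]$ has finite measure, $\{u^n\}$ is also bounded in $L^2([0,T])$ and converges weakly there to $u^{\ast}$, so Lemma~\ref{lemma:Mazur} (Mazur) produces convex combinations $\underline{u}^n$ converging strongly in $L^2$ to $u^{\ast}$; the parallel convex combinations $\underline{v}^n$ of the $v^{n}$ still converge in $[0,1]^{k+1}$ to $v^{\ast}$.

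Third, I would close the argument by continuity. An extension of the Gronwall-type estimate~(\ref{contin}) shows that $\Theta$—and hence $J$—depends continuously on $(u,v)$ in the strong $L^2\times$Euclidean topology, so $J(\underline{u}^n,\underline{v}^n)\to J(u^{\ast},v^{\ast})$. Since each $(\underline{u}^n,\underline{v}^n)$ is a convex combination of minimizing points and $J^{\ast}\le J(\underline{u}^n,\underline{v}^n)$, convexity in the $v$-argument combined with the fact that the $J(u^{n_m},v^{n_m})$ approach $J^{\ast}$ forces $\lim_n J(\underline{u}^n,\underline{v}^n)=J^{\ast}$, so $J(u^{\ast},v^{\ast})=J^{\ast}$.

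The principal obstacle is the continuous control $u$: $L^{\infty}$ is not reflexive, so bounded sequences there need not admit norm-convergent subsequences, and one must route through weak-$\ast$ compactness together with Mazur's lemma in $L^2$. A secondary technical point is that $J$ is not manifestly convex in $u$—the nonlinearity $1/(1-\sigma u)$ enters the dynamics nonlinearly—so if direct convexity fails in the passage from strong convergence of convex combinations to $J(u^{\ast},v^{\ast})=J^{\ast}$, one would instead invoke weak-$\ast$ lower semi-continuity of $J$ with respect to $u$ and conclude $J(u^{\ast},v^{\ast})\le\liminf_n J(u^n,v^n)=J^{\ast}$.
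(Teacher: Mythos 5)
Your proposal follows essentially the same route as the paper's proof: bound $J$, take a minimizing sequence in $L^{\infty}\left([0,T];[0,1]\right)\times[0,1]^{k+1}$, extract a weakly (weak-$\ast$) convergent subsequence, upgrade to strong convergence of convex combinations via Mazur's lemma, and conclude by continuity of $J$. The caveat you raise at the end is well taken --- the paper itself simply asserts ``since $J$ is continuous, $J(u^{\ast},v^{\ast})=J^{\ast}$'' without justifying that the convex combinations remain a minimizing sequence (which needs convexity or lower semicontinuity of $J$ in the controls) --- so your extra care there is a refinement of, not a departure from, the paper's argument.
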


\begin{proof}
We may take $v^{\ast}=\left(  v_{i}^{\ast}\right)  _{0\leq i\leq k}\in\left[
0,1\right]  ^{k+1}$, since $v_{i}$ with $i>k$ do not affect $J$. Note that
\[
0\leq J\leq1+\left(  1+\underset{s\in\left[  0,T\right]  }{\sup}C\left(
s\right)  \right)  T+%
{\displaystyle\sum\nolimits_{i=0}^{k}}
c_{i}.
\]
Let
\[
J^{\ast}=\underset{\left(  u,v\right)  \in L^{\infty}\left(  \left[
0,T\right]  ;\left[  0,1\right]  \right)  \times\left[  0,1\right]  ^{k+1}%
}{\inf}J\left(  u,v\right)  .
\]
Then there is a $L^{\infty}\left(  \left[  0,T\right]  ;\left[  0,1\right]
\right)  \times\left[  0,1\right]  ^{k+1}-$valued sequence $\left\{
u^{n},v^{n}\right\}  _{n\in%
\mathbb{N}
}$ such that the sequence $\left\{  J\left(  u^{n},v^{n}\right)  \right\}
_{n\in%
\mathbb{N}
}$ converges to $J^{\ast}$. The sequence $\left\{  u^{n},v^{n}\right\}  _{n\in%
\mathbb{N}
}$ is bounded, so there is a subsequence $\left\{  u^{n_{m}},v^{n_{m}%
}\right\}  $ which converges weakly to a strategy $\left\{  u^{\ast},v^{\ast
}\right\}  \in L^{\infty}\left(  \left[  0,T\right]  ;\left[  0,1\right]
\right)  \times\left[  0,1\right]  ^{k+1}$. Using the lemma of Mazur, there is
a sequence $\left\{  \underline{u}^{n},\underline{v}^{n}\right\}  _{n\in%
\mathbb{N}
}$ in $conv\left(  \left\{  \left(  u^{n},v^{n}\right)  \right\}  _{n\in%
\mathbb{N}
}\right)  \subset L^{\infty}\left(  \left[  0,T\right]  ;\left[  0,1\right]
\right)  \times\left[  0,1\right]  ^{k+1}$ which converges strongly to
$\left(  u^{\ast},v^{\ast}\right)  $. Since $J$ is continuous, $J\left(
u^{\ast},v^{\ast}\right)  =J^{\ast}$.
\end{proof}

Now let $\Theta_{\underline{u},\underline{v}}$ be the solution of $\left(
\ref{ModelPulse1IDE}\right)  -\left(  \ref{ModelPulse3IDE}\right)  $
associated to a chosen control strategy $\left(  \underline{u},\underline
{v}\right)  $, and let $z_{u}=D_{u}\Theta_{\underline{u},\underline{v}}$
and $J_{u}(\underline{u},\underline{v})=D_{u}J\left(  \underline{u},\underline
{v}\right)$. Then using (\ref{eq:Phi}) we may
compute for $t\in]\tau_{i},\tau_{i+1}]$,
\begin{align*}
&  z_{u}\left(  t\right) \\
&  =\underline{v}_{i}\left(  z_{u}\left(  \tau_{i}\right)  -\sigma
\Theta_{\underline{u},\underline{v}}\left(  \tau_{i}\right)
{\displaystyle\int\nolimits_{\tau_{i}}^{t}}
\alpha\left(  s\right)  \underline{u}\left(  s\right)  /\left(  1-\sigma\underline
{u}\left(  s\right)  \right)  ^{2}ds\right) \\
&  \qquad\times\exp\left(  -%
{\displaystyle\int\nolimits_{\tau_{i}}^{t}}
\alpha\left(  s\right)  /\left(  1-\sigma\underline{u}\left(  s\right)
\right)  ds\right) \\
&  ~-%
{\displaystyle\int\nolimits_{\tau_{i}}^{t}}
\sigma\alpha\left(  s\right)  \exp\left(  -%
{\displaystyle\int\nolimits_{s}^{t}}
\alpha\left(  \tau\right)  /\left(  1-\sigma\underline{u}\left(  \tau\right)
\right)  d\tau\right) \\
&  \qquad\times%
{\displaystyle\int\nolimits_{s}^{t}}
\alpha\left(  \tau\right)  \underline{u}\left(  \tau\right)  /\left(  1-\sigma
\underline{u}\left(  \tau\right)  \right)  ^{2}d\tau ds.
\end{align*}
In analogy with (\ref{eq:zv_computation}), we may derive for $t\in]\tau
_{i},\tau_{i+1}]$
\begin{align*}
&  z_{u}\left(  t\right) \\
&  =-%
{\displaystyle\sum\nolimits_{j=0}^{i-1}}
\left(
{\displaystyle\prod\nolimits_{l=j}^{i}}
\underline{v}_{l}\right)
{\displaystyle\int\nolimits_{\tau_{j}}^{\tau_{j+1}}}
\alpha\left(  s\right)  \underline{u}\left(  s\right)  /\left(  1-\sigma\underline
{u}\left(  s\right)  \right)  ^{2}ds\\
&  \qquad\times\sigma\Theta_{\underline{u},\underline{v}}\left(  \tau
_{j}\right)  \exp\left(  -%
{\displaystyle\int\nolimits_{\tau_{j}}^{t}}
\underline{u}\left(  s\right)  \alpha\left(  s\right)  /\left(  1-\sigma\underline
{u}\left(  s\right)  \right)  ds\right) \\
&  ~-%
{\displaystyle\sum\nolimits_{j=0}^{i-1}}
\left(
{\displaystyle\prod\nolimits_{l=j+1}^{i}}
\underline{v}_{l}\right) \\
&  \qquad\times%
{\displaystyle\int\nolimits_{\tau_{j}}^{\tau_{j+1}}}
{\displaystyle\int\nolimits_{s}^{\tau_{j+1}}}
\alpha\left(  \tau\right)  \underline{u}\left(  \tau\right)  /\left(  1-\sigma
\underline{u}\left(  \tau\right)  \right)  ^{2}d\tau\\
&  \qquad\times\sigma\alpha\left(  s\right)  \exp\left(  -%
{\displaystyle\int\nolimits_{s}^{t}}
\alpha\left(  \tau\right)  /\left(  1-\sigma\underline{u}\left(  \tau\right)
\right)  d\tau\right)  ds
\end{align*}%
\begin{align*}
&  ~-%
{\displaystyle\int\nolimits_{\tau_{i}}^{t}}
{\displaystyle\int\nolimits_{s}^{t}}
\alpha\left(  \tau\right)  \underline{u}\left(  \tau\right)  /\left(  1-\sigma
\underline{u}\left(  \tau\right)  \right)  ^{2}d\tau\\
&  \qquad\times\sigma\alpha\left(  s\right)  \exp\left(  -%
{\displaystyle\int\nolimits_{s}^{t}}
\alpha\left(  \tau\right)  /\left(  1-\sigma\underline{u}\left(  \tau\right)
\right)  d\tau\right)  ds\\
&  ~-%
{\displaystyle\int\nolimits_{\tau_{i}}^{t}}
\alpha\left(  s\right)  \underline{u}\left(  s\right)  /\left(  1-\sigma\underline
{u}\left(  s\right)  \right)  ^{2}ds\\
&  \qquad\times\underline{v}_{i}\sigma\Theta_{\underline{u},\underline{v}%
}\left(  \tau_{i}\right)  \exp\left(  -%
{\displaystyle\int\nolimits_{\tau_{i}}^{t}}
\alpha\left(  s\right)  /\left(  1-\sigma\underline{u}\left(  s\right)
\right)  ds\right)  .
\end{align*}
Thus $z_{u}$ satisfies:
\begin{equation}
z_{u}\left(  0\right)  =0;\qquad z_{u}\left(  \tau_{i}^{+}\right)
=\underline{v}_{i}z_{u}\left(  \tau_{i}\right)  , \label{EqZu3}%
\end{equation}
and $\forall t\in\mathbb{R}_{+}^{\ast}\setminus(\tau_{i})_{i\in\mathbb{N}}$,
\begin{align}
&  dz_{u}/dt=-\alpha\left(  t\right)  z_{u}\left(  t\right)  /\left(
1-\sigma\underline{u}\left(  t\right)  \right) \nonumber\\
&  \qquad-\sigma\alpha\left(  t\right)  \underline{u}\left(  t\right)  \Theta
_{\underline{u},\underline{v}}\left(  t\right)  /\left(  1-\sigma\underline
{u}\left(  t\right)  \right)  . \label{EqZu1}%
\end{align}
We may also compute
\begin{align}
J_{u}(\underline{u}.\underline{v})=%
& {\displaystyle\int\nolimits_{0}^{T}}
z_{u}\left(  s\right)    +C\left(  s\right)  \underline{u}\left(  s\right)
ds\nonumber\\
& \quad +%
{\displaystyle\sum\nolimits_{i=0}^{k}}
c_{i}\left(  1-\underline{v}_{i}\right)  z_{u}\left(  \tau_{i}\right)
+C_{f}z_{u}\left(  T\right)  . \label{EqJu1}%
\end{align}

\begin{proposition}
\label{prop:JuJvOpt1} If $\left(  \underline{u},\underline{v}\right)  =\left(
u^{\ast},v^{\ast}\right)  $ is an optimal strategy then $\forall\left(
u,v\right)  \in V,$%
\begin{align*}
0  &  \leq J_{u}(u^{\ast},v^{\ast})+J_{v}(u^{\ast},v^{\ast})\\
&  =%
{\displaystyle\int\nolimits_{0}^{T}}
C\left(  s\right)  u\left(  s\right)  ds+%
{\displaystyle\sum\nolimits_{i=0}^{k}}
\left(  p_{u^{\ast}}\left(  \tau_{i}^{+}\right)  -c_{i}\right)  v_{i}%
\Theta_{v^{\ast}}\left(  \tau_{i}\right) \\
&  -%
{\displaystyle\int\nolimits_{0}^{T}}
\sigma u\left(  s\right)  \alpha\left(  s\right)  p_{u^{\ast}}\left(
s\right)  \Theta_{u^{\ast},v^{\ast}}\left(  s\right)  /\left(  1-\sigma
u^{\ast}\left(  s\right)  \right)  ^{2}ds,
\end{align*}
where%
\begin{align*}
V  &  =\left\{  \left(  u,v\right)  \in L^{\infty}\left(  \left[  0,T\right]
;%
\mathbb{R}
\right)  \times%
\mathbb{R}
^{k+1};\exists\varepsilon>0;\right. \\
&  \left.  \left(  u^{\ast}+\varepsilon u,v^{\ast}+\varepsilon v\right)  \in
L^{\infty}\left(  \left[  0,T\right]  ;\left[  0,1\right]  \right)
\times\left[  0,1\right]  ^{k+1}\right\}  ,
\end{align*}
and $p_{u^{\ast},v^{\ast}}$ is the solution to the problem $\left(
\ref{EqAdjp1}\right)  -\left(  \ref{EqAdjp2}\right)$  with $\underline{v}=v^{\ast}$ and $u = u^{\ast}$.
\end{proposition}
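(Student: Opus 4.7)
The plan is to combine two directional-derivative computations: the $v$-direction derivative, which essentially reproduces Proposition~\ref{eq:OptStrategyAverage} with $u$ frozen at $u^{\ast}$, and a new $u$-direction computation obtained by an integration-by-parts argument against the same adjoint $p_{u^{\ast}}$. By optimality of $(u^{\ast},v^{\ast})$, for every admissible direction $(u,v)\in V$ and every sufficiently small $\varepsilon>0$ one has $J(u^{\ast}+\varepsilon u,v^{\ast}+\varepsilon v)\geq J(u^{\ast},v^{\ast})$; dividing by $\varepsilon$ and letting $\varepsilon\to 0^{+}$ yields $J_u(u^{\ast},v^{\ast})+J_v(u^{\ast},v^{\ast})\geq 0$. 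Since the dependence of $\Theta$ on $v$ is identical to the pure-impulsive case and the $Cu$ term in $J$ is $v$-independent, the argument of Proposition~\ref{eq:OptStrategyAverage} applies verbatim and gives $J_v(u^{\ast},v^{\ast})=\sum_{i=0}^{k}(p_{u^{\ast}}(\tau_i^{+})-c_i)v_i\Theta_{v^{\ast}}(\tau_i)$; existence and uniqueness of $p_{u^{\ast}}$ are obtained as in Proposition~\ref{ExistenceGlobalSolutionIDE}.

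The main work is to compute $J_u(u^{\ast},v^{\ast})$ starting from expression \ref{EqJu1}. My plan is, on each impulse-free subinterval $]\tau_i,\tau_{i+1}]$ (and on $]\tau_k,T]$), to evaluate $\int p_{u^{\ast}}\,(dz_u/ds)\,ds$ in two different ways: first by integration by parts together with the adjoint ODE \ref{EqAdjp1}, and second by direct substitution of the perturbation ODE \ref{EqZu1}. The new feature absent from the pure-$v$ case is the inhomogeneous forcing $-\sigma\alpha u\,\Theta_{u^{\ast},v^{\ast}}/(1-\sigma u^{\ast})^{2}$ in \ref{EqZu1}; it is exactly this forcing that generates the integral $-\int_0^T\sigma u\alpha p_{u^{\ast}}\Theta_{u^{\ast},v^{\ast}}/(1-\sigma u^{\ast})^{2}\,ds$ in the target identity, while the $-1$ term in \ref{EqAdjp1} contributes the $\int_0^Tz_u\,ds$ piece that then cancels the one in \ref{EqJu1}. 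The term $\int_0^TC(s)u(s)\,ds$ is simply carried through from \ref{EqJu1} unchanged.

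The remaining task, and the principal technical obstacle, is to sum over $i$ and verify that the accumulated boundary terms from the integration by parts cancel exactly the contributions $C_fz_u(T)+\sum_{i=0}^{k}c_i(1-v_i^{\ast})z_u(\tau_i)$ that appear in \ref{EqJu1}. This telescoping relies on the initial condition $z_u(0)=0$ and the impulse relation $z_u(\tau_i^{+})=v_i^{\ast}z_u(\tau_i)$ from \ref{EqZu3}, together with the terminal condition $p_{u^{\ast}}(T)=C_f$ and the adjoint jump $p_{u^{\ast}}(\tau_i)=v_i^{\ast}p_{u^{\ast}}(\tau_i^{+})+c_i(1-v_i^{\ast})$ from \ref{EqAdjp2}; these fit together so that every internal boundary contribution cancels. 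Once this bookkeeping is verified, adding the resulting expression for $J_u(u^{\ast},v^{\ast})$ to the formula for $J_v(u^{\ast},v^{\ast})$ and invoking $J_u+J_v\geq 0$ delivers the stated identity.
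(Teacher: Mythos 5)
Your proposal is correct and follows essentially the same route as the paper: evaluate $\sum_i\int_{\tau_i}^{\tau_{i+1}}p_{u^{\ast},v^{\ast}}\,dz_u$ two ways (by parts against the adjoint equation (\ref{EqAdjp1})--(\ref{EqAdjp2}), and directly via (\ref{EqZu1})), solve for $\int_0^T z_u\,ds$, substitute into (\ref{EqJu1}), reuse the Proposition~\ref{eq:OptStrategyAverage} computation for the $v$-direction, and conclude nonnegativity from first-order optimality. Your explicit accounting of the telescoping boundary terms and of the forcing term in (\ref{EqZu1}) as the source of the new integral is exactly the bookkeeping the paper leaves implicit.
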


\begin{proof}
The proof parallels those of Proposition \ref{eq:OptStrategyAverage} and
Theorem \ref{MainTheoremPulse}. We may evaluate
\[
\sum_{i=0}^{k} \int_{\tau_{i}}^{\tau_{i+1}} p_{u^{\ast},v^{\ast}} dz_{u}(s) +
\int_{\tau_{k}}^{T}p_{u^{\ast},v^{\ast}} dz_{u}(s)
\]
in two ways: by parts, and using the derivative expression (\ref{EqZu1}).
Equating the two results gives an expression for $\int_{0}^{T} z_{u}(s) ds$,
which we may then plug into (\ref{EqJu1}) and simplify to obtain the above
expression for $J_{u}(u^{\ast},v^{\ast}) + J_{v}(u^{\ast},v^{\ast})$. The conclusion $J_{u}(u^{\ast},v^{\ast}) + J_{v}(u^{\ast},v^{\ast}) \ge0$ follows as
before from the fact that $J\left(  u^{\ast}+\varepsilon u,v^{\ast
}+\varepsilon v\right)  \geq J\left(  u^{\ast},v^{\ast}\right)  $ for abitrary
but fixed $\left(  u,v\right)  \in V$ and $\varepsilon>0$ sufficiently small.
\end{proof}

For $u_{1}, u_{2} \in L^{\infty}\left(  \left[  0,T\right]  ;%
\mathbb{R}
\right)  \supset L^{\infty}\left(  \left[  0,T\right]  ;\left[  0,1\right]
\right)  , $ define the inner product%
\[
\left\langle u_{1},u_{2}\right\rangle =%
{\displaystyle\int\nolimits_{0}^{T}}
u_{1}\left(  s\right)  u_{2}\left(  s\right)  ds
\]
with its associated norm $\left\Vert u\right\Vert _{3} =\left(
{\displaystyle\int\nolimits_{0}^{T}}
u^{2}\left(  s\right)  ds\right)  ^{1/2}$. Note that $\forall u\in L^{\infty
}\left(  \left[  0,T\right]  ;%
\mathbb{R}
\right)  ,$ $\left\Vert u\right\Vert \leq\sqrt{T}\times\underset{t\in\left[
0,T\right]  }{ess\sup}\left\vert u\left(  t\right)  \right\vert _{3} $; thus
we may find $u^{\ast}$ using the topology given by $\left\Vert .\right\Vert
_{3}$. Since $J_{u}(u^{\ast},v^{\ast})$ is a linear continuous operator acting on $u$, it follows
there is a unique $\overline{u}\in L^{\infty}\left(  \left[  0,T\right]  ;%
\mathbb{R}
\right)  $ such that%

\begin{equation} \label{eq:j_u}
\left\langle \overline{u},u\right\rangle =J_{u}(u^{\ast},v^{\ast}).
\end{equation}
Indeed, from Proposition~\ref{prop:JuJvOpt1} we find
\begin{equation}\label{eq:u_overline}
\overline{u}=C-\sigma\alpha p_{u^{\ast},v^{\ast}}\Theta_{u^{\ast}%
,v^{\ast}}/\left(  1-\sigma u^{\ast}\right)  ^{2}%
\end{equation}

\begin{proposition}
\label{prop:JuJvOpt12}If $\left(  \underline{u},\underline{v}\right)  =\left(
u^{\ast},v^{\ast}\right)  $ is an optimal strategy then $v^{\ast}\in\left\{
0,1\right\}  ^{k}$. Moreover,
\begin{equation}
v_{i}^{\ast}=\left\{
\begin{array}
[c]{l}%
0,\text{ if }p_{u^{\ast},v^{\ast}}\left(  \tau_{i}^{+}\right)  >c_{i}\text{ and }%
\Theta_{u^{\ast},v^{\ast}}\left(  \tau_{i}\right)  \geq\sigma^{\ast}\\
1,\text{ otherwise},%
\end{array}
\right.
\end{equation}
and%
\begin{equation}\label{eq:Optucondition}
u^{\ast}=\left\{
\begin{array}
[c]{l}%
0,\text{ if }C>\sigma\alpha p_{u^{\ast},v^{\ast}}\Theta_{u^{\ast},v^{\ast}}/\left(  1-\sigma\right)\\
1,\text{otherwise. }
\end{array}
\right.
\end{equation}

\end{proposition}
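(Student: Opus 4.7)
The plan is to mimic the argument of Proposition~\ref{eq:OptStrategyAverage2}, but now exploiting the inequality of Proposition~\ref{prop:JuJvOpt1}, which separates cleanly into a $v$-contribution and a $u$-contribution. The key point is that the admissible variations $(u,v)\in V$ are independent: we may freely set one component to $0$ and vary the other, since for small $\varepsilon>0$ the perturbed pair still lies in $L^\infty([0,T];[0,1])\times[0,1]^{k+1}$ whenever each component does.

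First I would prove the characterization of $v^{\ast}$. Fixing $u=0$ and letting $v\in V$ be arbitrary, Proposition~\ref{prop:JuJvOpt1} collapses to the pulse-only optimality condition
\[
\sum_{i=0}^{k}\bigl(p_{u^{\ast},v^{\ast}}(\tau_i^+)-c_i\bigr)v_i\,\Theta_{u^{\ast},v^{\ast}}(\tau_i)\ge 0,
\]
which is exactly the inequality exploited in Proposition~\ref{eq:OptStrategyAverage2}. Since the components of $v$ can be varied independently, and since $v_i^{\ast}\in[0,1]$ means admissible increments in $v_i$ are constrained to $\mathbb{R}_{+}$ when $v_i^{\ast}=0$ and to $\mathbb{R}_{-}$ when $v_i^{\ast}=1$, the sign analysis is identical: $p_{u^{\ast},v^{\ast}}(\tau_i^+)>c_i$ forces $v_i^{\ast}=0$, $p_{u^{\ast},v^{\ast}}(\tau_i^+)<c_i$ forces $v_i^{\ast}=1$, and in the borderline case $p_{u^{\ast},v^{\ast}}(\tau_i^+)=c_i$ we may choose $v_i^{\ast}=1$ (using the same monotonicity remark concerning the threshold $\sigma^{\ast}$).

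Next I would handle $u^{\ast}$ by the symmetric choice $v=0$. Proposition~\ref{prop:JuJvOpt1} then reduces to
\[
J_u(u^{\ast},v^{\ast})=\int_{0}^{T}\overline{u}(s)\,u(s)\,ds\ge 0\quad\text{for every admissible }u,
\]
with $\overline{u}$ given by (\ref{eq:u_overline}). The admissibility constraint forces $u(s)\ge 0$ on $\{u^{\ast}=0\}$ and $u(s)\le 0$ on $\{u^{\ast}=1\}$, while on $\{0<u^{\ast}<1\}$ both signs are allowed. A standard pointwise bang-bang argument (select $u$ concentrated on a small set where $\overline{u}$ has a definite sign) then yields $\overline{u}\ge 0$ a.e.\ on $\{u^{\ast}=0\}$, $\overline{u}\le 0$ a.e.\ on $\{u^{\ast}=1\}$, and $\overline{u}=0$ a.e.\ on the interior set. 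Substituting the formula for $\overline{u}$ and evaluating the denominator at the extremal value $u^{\ast}\in\{0,1\}$ yields the stated switching rule (\ref{eq:Optucondition}).

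The only real subtlety, which I would address carefully, is the interior set $\{0<u^{\ast}<1\}$: there $\overline{u}=0$ a.e., so the switching rule becomes an equality rather than a strict inequality, and by the same monotonicity trick used for $v_i^{\ast}$ we may modify $u^{\ast}$ on this (possibly null) set to take values in $\{0,1\}$ without increasing $J$, thereby obtaining a bang-bang optimum consistent with (\ref{eq:Optucondition}). Everything else is a transcription of the pulse-only argument; the genuinely new ingredient is the pointwise sign analysis of $\overline{u}$, which is where I expect to spend the most care.
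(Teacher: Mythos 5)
Your treatment of the pulse component is fine and matches the paper: the variations in $u$ and $v$ decouple, and with $u=0$ the inequality of Proposition~\ref{prop:JuJvOpt1} reduces to the pulse-only condition, after which the sign analysis of Proposition~\ref{eq:OptStrategyAverage2} carries over verbatim.

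The continuous component, however, has a genuine gap. Your pointwise first-order analysis gives $\overline{u}\geq 0$ a.e.\ on $\{u^{\ast}=0\}$ and $\overline{u}\leq 0$ a.e.\ on $\{u^{\ast}=1\}$, with $\overline{u}=C-\sigma\alpha p\,\Theta/(1-\sigma u^{\ast})^{2}$. Evaluating the denominator at the endpoints yields the conditions $C\geq\sigma\alpha p\,\Theta$ on $\{u^{\ast}=0\}$ and $C\leq\sigma\alpha p\,\Theta/(1-\sigma)^{2}$ on $\{u^{\ast}=1\}$ --- neither of which is the stated threshold $\sigma\alpha p\,\Theta/(1-\sigma)$, and the two conditions are not mutually exclusive (since $\sigma\alpha p\,\Theta\leq\sigma\alpha p\,\Theta/(1-\sigma)\leq\sigma\alpha p\,\Theta/(1-\sigma)^{2}$, there is a whole range of values of $C$ compatible with both). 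So the first-order necessary conditions alone cannot decide between $u^{\ast}=0$ and $u^{\ast}=1$, and your claim that they ``yield the stated switching rule'' is not correct. The paper closes this gap with a finite (not infinitesimal) perturbation: it sets $u_{(\zeta,s,\delta)}=u^{\ast}+\zeta\chi_{[s,s+\delta]}$ and integrates the directional derivative in $\zeta$, obtaining $J(u_{(\zeta,s,\delta)},v^{\ast})-J(u^{\ast},v^{\ast})=\delta\left(q(u^{\ast}(s)+\zeta)-q(u^{\ast}(s))\right)+\mathcal{O}(\delta^{2})$ with $q(\eta)=C\eta-\alpha p\,\Theta/(1-\sigma\eta)$. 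Strict concavity of $q$ ($q''<0$) shows that an interior value of $u^{\ast}(s)$ would be a local maximum of $q$ and hence cannot be optimal --- so the interior set is actually excluded, not merely ``modifiable without increasing $J$'' as your monotonicity analogy suggests --- and the choice between the endpoints is governed by the sign of $q(1)-q(0)=C-\sigma\alpha p\,\Theta/(1-\sigma)$, which is exactly where the stated threshold comes from. The essential missing ingredient in your proposal is this global comparison of the two endpoint values of $q$; no purely local gradient computation at the optimum will produce it.
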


\begin{proof}
The proof is similar to that of Proposition
\ref{eq:OptStrategyAverage2}.  The extremality of the pulse strategy follows as before. 
To show extremality of the optimum continuous strategy, we suppose that $(u^{\ast},v^{\ast})$ is an optimum strategy, and for real parameters $(\zeta, s, \delta)$ define an alternate continuous control strategy
\begin{equation*}
u_{(\zeta,s,\delta)}(t) \equiv u^{\ast}(t) + \zeta\chi_{[s,s+\delta]}(t), 
\end{equation*}
where $\chi_{[a,b]}$ denotes the characteristic function for the interval $[a,b]$. We may then compute
\begin{equation}\label{eq:Ju}
\begin{aligned}
J(u_{(\zeta,s,\delta)},v^{\ast}) - J(u^{\ast},v^{\ast}) &= \int_0^{\zeta} J_{\chi_{[s,s+\delta]}} (u^{\ast} + \xi\chi_{[s,s+\delta]}, v^{\ast}) d\xi \\
&= \int_0^{\zeta} \langle \overline{u}_{\xi,s,\delta}\chi_{[s,s+\delta]} \rangle d\xi,
\end{aligned}
\end{equation}
where
\begin{equation*}
 \overline{u}_{(\zeta,s,\delta)} \equiv C - \frac{\sigma \alpha p_{u^{\ast},v^{\ast}}\Theta_{u^{\ast},v^{\ast}}}{\left(1 - \sigma(u^{\ast} + \zeta\chi_{[s,s+\delta]})\right)^2}
\end{equation*}
Substituting into (\ref{eq:Ju}), we find (for  $\delta \rightarrow 0$)
\begin{align*}
&J(u_{(\zeta,s,\delta)},v^{\ast}) - J(u^{\ast},v^{\ast}) \\
&\qquad=
 \delta \int_0^{\zeta}  
C - \frac{\sigma \alpha p_{u^{\ast},v^{\ast}}\Theta_{u^{\ast},v^{\ast}}}{\left(1 - \sigma(u^{\ast}(s) + \zeta)\right)^2} d\xi
+ \mathcal{O}(\delta^2) \\
&\qquad= \delta \left(q(u^{\ast}(s)+\zeta) - q(u^{\ast}(s)) \right) + \mathcal{O}(\delta^2),
\end{align*}
where
\begin{equation*}
q(\eta) \equiv c \eta - \frac{\alpha p_{u^{\ast},v^{\ast}}\Theta_{u^{\ast},v^{\ast}}}{1- \sigma\eta}.
\end{equation*}
Since $J$ assumes its minimum at $(u^{\ast},v^{\ast})$, it follows that  $q(\eta)$ is minimized when $\eta = u^{\ast}(s)$  (note that $0 \le \eta \le 1$ because of restrictions on the control $u(s)$). It is easily shown that $q'' < 0$ on that interval, which implies that the minimum occurs at one of the endpoints, that is $\eta = 0$ or $\eta = 1$.  To find which, we compute
\begin{equation}
q(1) - q(0) = C - \frac{\sigma \alpha p_{u^{\ast},v^{\ast}}}{1 - \sigma},
\end{equation}
which leads directly to the condition (\ref{eq:Optucondition}).
%
\end{proof}

\subsection{Space-dependent model with mixed strategy\label{COUVGeneralModel}}

In this subsection we survey the existence and we characterize of a strategy
$\left(  u^{\ast},v^{\ast}\right)  =\left(  u^{\ast},\left(  v_{i}^{\ast
}\right)  _{i\in%
\mathbb{N}
}\right)  $ for the main model which minimizes the following cost functional:%
\begin{align*}
J\left(  u,v\right)   &  =%
{\displaystyle\int\nolimits_{\Omega}}
{\displaystyle\int\nolimits_{0}^{T}}
\theta\left(  s,x\right)  +C\left(  s,x\right)  u\left(  s,x\right)  dsdx\\
&  +%
{\displaystyle\sum\nolimits_{i=0}^{k}}
{\displaystyle\int\nolimits_{\Omega}}
c_{i}\left(  x\right)  \left(  1-v_{i}\left(  x\right)  \right)  \theta\left(
\tau_{i},x\right)  dx\\
&  +%
{\displaystyle\int\nolimits_{\Omega}}
C_{f}\left(  x\right)  \theta\left(  T,x\right)  dx,
\end{align*}
where $C\in L_{loc}^{\infty}\left(
\mathbb{R}
_{+}\times\Omega;%
\mathbb{R}
_{+}\right)  $ is almost everywhere positive, $\forall i\in%
\mathbb{N}
,$ $v_{i}\in L^{\infty}\left(  \Omega;\left[  0,1\right]  \right)  ,$
$c_{i}\in L^{\infty}\left(  \Omega;%
\mathbb{R}
_{+}\right)  ,$ $\forall x\in\Omega$, $c\left(  x\right)  =\left(
c_{i}\left(  x\right)  \right)  _{i\in%
\mathbb{N}
}\subset%
\mathbb{R}
_{+}^{\ast}$. $C$ and $c$ are time dependent cost ratios related to the use of
the control strategy. $C_{f}$ is a cost related to the final inhibition rate.
As before, we need only consider the first $k+1$ terms of $v$ and $c$. We may
also consider the restriction of $u$ to $\left[  0,T\right]  $, so that $u\in
L^{\infty}\left(  \left[  0,T\right]  \times\Omega;\left[  0,1\right]
\right)  $.

The following theorem generalizes Theorem \ref{ExistenceOfAnOptimalStrategyv} %
and Proposition \ref{ExistenceOfAnOptimalStrategyuv}.

\begin{theorem}
\label{ExistenceOfAnOptimalStrategyuvPDE}There is an optimal strategy $\left(
u^{\ast},v^{\ast}\right)  =\left(  u^{\ast},\left(  v_{i}^{\ast}\right)
_{i\in%
\mathbb{N}
}\right)  $ which minimizes $J$.
\end{theorem}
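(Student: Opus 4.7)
The plan is to follow the same variational blueprint used in Proposition \ref{ExistenceOfAnOptimalStrategyuv} and Theorem \ref{ExistenceOfAnOptimalStrategyv}, now carried out in the product space
\[
L^{\infty}([0,T]\times\Omega;[0,1]) \times \bigl(L^{\infty}(\Omega;[0,1])\bigr)^{k+1}.
\]
First I would observe that since $v_{i}$ for $i>k$ do not affect $J$, it suffices to seek $v^{\ast}=(v_{i}^{\ast})_{0\le i\le k}$ in $\bigl(L^{\infty}(\Omega;[0,1])\bigr)^{k+1}$. Then I would bound $J$ uniformly: using Proposition~\ref{boundednessIPDE} we have $0\le \theta \le 1$, so
\[
0 \le J(u,v) \le |\Omega|\bigl(T + T\,\|C\|_{L^{\infty}([0,T]\times\Omega)} + \textstyle\sum_{i=0}^{k}\|c_{i}\|_{L^{\infty}(\Omega)} + \|C_{f}\|_{L^{\infty}(\Omega)}\bigr),
\]
so that $J^{\ast}\equiv \inf J(u,v)$ is a finite nonnegative number, and a minimizing sequence $\{(u^{n},v^{n})\}$ exists.

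Next I would extract a weakly convergent subsequence. Since the sequence is bounded in $L^{\infty}([0,T]\times\Omega)\times\bigl(L^{\infty}(\Omega)\bigr)^{k+1}$, which is the dual of $L^{1}([0,T]\times\Omega)\times\bigl(L^{1}(\Omega)\bigr)^{k+1}$, the Banach--Alaoglu theorem gives a subsequence $\{(u^{n_{m}},v^{n_{m}})\}$ converging in the weak-$\ast$ topology to some $(u^{\ast},v^{\ast})$ in the same space, with the admissibility constraint $0\le u^{\ast}\le 1$ and $0\le v_{i}^{\ast}\le 1$ preserved in the limit (these are closed convex conditions and hence weakly-$\ast$ closed). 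Then I would invoke Lemma~\ref{lemma:Mazur} (Mazur) to produce a sequence $(\underline{u}^{n},\underline{v}^{n})$ of convex combinations of $(u^{n_{m}},v^{n_{m}})$ that converges strongly to $(u^{\ast},v^{\ast})$; by convexity of the admissible set, each $(\underline{u}^{n},\underline{v}^{n})$ lies in $L^{\infty}([0,T]\times\Omega;[0,1])\times\bigl(L^{\infty}(\Omega;[0,1])\bigr)^{k+1}$.

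The main obstacle, and the step that needs to be cited carefully, is continuity of $J$ with respect to the controls in the strong $L^{\infty}$-topology. For this I would appeal to the continuous dependence of the weak solution $\theta$ of $(\ref{ModelPulse1})$--$(\ref{ModelPulse4})$ on $(u,v)$, which was noted in the paragraph following Theorem~\ref{ExistenceGlobalSolutionIPDE} and parallels the estimate $(\ref{contin})$ of the averaged model. Concretely, if $(u^{n},v^{n})\to(u^{\ast},v^{\ast})$ strongly in $L^{\infty}$, then $\theta_{u^{n},v^{n}}\to \theta_{u^{\ast},v^{\ast}}$ in $L^{\infty}([0,T];L^{2}(\Omega))$, say, from which the four terms of $J(u^{n},v^{n})$ pass to the limit by dominated convergence. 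Because the minimizing property is preserved along the Mazur sequence (convex combinations cannot increase the infimum since $J$ is itself convex in the continuous argument within each $\tau$-interval, and in any case the original $J(v^{n_{m}},u^{n_{m}})\to J^{\ast}$ combined with lower semicontinuity of $J$ along the Mazur approximants suffices), we conclude $J(u^{\ast},v^{\ast})=J^{\ast}$.

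Finally I would note that one technical subtlety deserves a brief remark: the impulse times $\tau_{i}$ themselves depend on $(u,v)$ through the threshold rule, so one should observe that along the strongly converging Mazur sequence the norms $\|\theta^{n}(t_{k},\cdot)\|_{L^{2}(\Omega)}$ converge to $\|\theta^{\ast}(t_{k},\cdot)\|_{L^{2}(\Omega)}$ at each candidate time $t_{k}$, hence eventually the $\tau_{i}^{n}$ coincide with $\tau_{i}^{\ast}$ (after possibly modifying $v^{\ast}$ on a null set of threshold-crossing trajectories, which does not change $J$). This justifies the final continuity step and completes the proof.
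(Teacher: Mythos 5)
Your proposal takes essentially the same route as the paper: the paper's proof of this theorem is a one-line reference to the Mazur-lemma argument of Theorem~\ref{ExistenceOfAnOptimalStrategyv} and Proposition~\ref{ExistenceOfAnOptimalStrategyuv}, which is exactly the bound--minimizing-sequence--weak-$\ast$-compactness--Mazur--continuity chain you carry out. You in fact supply more detail than the paper does, explicitly flagging the two points it leaves implicit (why the Mazur convex combinations remain a minimizing sequence, and the dependence of the impulse times $\tau_{i}$ on the controls), so your write-up is, if anything, more careful than the original.
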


\begin{proof}
The proof uses the same argument as in Theorem
\ref{ExistenceOfAnOptimalStrategyv} and Proposition
\ref{ExistenceOfAnOptimalStrategyuv}, based on the lemma of Mazur.
\end{proof}

Let $\theta_{\underline{u},\underline{v}}$ be the solution of $\left(
\ref{ModelPulse1}\right)  -\left(  \ref{ModelPulse4}\right)  $ associated to a
chosen control strategy $\left(  \underline{u},\underline{v}\right)  $, and
let $z_{u}=D_{u}\theta_{\underline{u},\underline{v}}$ and $J_{u}=D_{u}J\left(
\underline{u},\underline{v}\right)  $. Using methods we have demonstrated
above, it may be shown that for $(t,x)\in\left(  \mathbb{R}_{+}^{\ast
}\setminus\left(  \tau_{i}\right)  _{i\in\mathbb{N}}\right)  \times\Omega$,
\begin{align}
\partial_{t}z_{u}=  &  -\alpha\left(  t,x\right)  z_{u}\left(  t,x\right)
/\left(  1-\sigma\underline{u}\left(  t,x\right)  \right) \nonumber\\
&  -\sigma\alpha\left(  t,x\right)  u\left(  t,x\right)  \theta_{\underline
{u},\underline{v}}\left(  t,x\right)  /\left(  1-\sigma\underline{u}\left(
t,x\right)  \right) \nonumber\\
&  +\operatorname{div}\left(  A\left(  t,x\right)  \nabla z_{u}\left(
t,x\right)  \right)  ,\text{ } \label{EqZu1PDE}%
\end{align}
where $z_{u}$ additionally satisfies
\begin{equation}
z_{u}\left(  0,x\right)  =0,\text{ }x\in\Omega\label{EqZu2PDE}%
\end{equation}
and
\begin{equation}
z_{u}\left(  \tau_{i}^{+},x\right)  =\underline{v}_{i}\left(  x\right)
z_{u}\left(  \tau_{i},x\right)  ,\text{ }x\in\Omega. \label{EqZu3PDE}%
\end{equation}
Furthermore, we may show
\begin{align*}
J_{u}  &  =%
{\displaystyle\int\nolimits_{0}^{T}}
{\displaystyle\int\nolimits_{\Omega}}
z_{u}\left(  s,x\right)  +C\left(  s,x\right)  u\left(  s,x\right)  dxds\\
&  +%
{\displaystyle\sum\nolimits_{i=0}^{k}}
{\displaystyle\int\nolimits_{\Omega}}
c_{i}\left(  x\right)  \left(  1-\underline{v}_{i}\left(  x\right)  \right)
z_{u}\left(  \tau_{i},x\right)  dx\\
&  +%
{\displaystyle\int\nolimits_{\Omega}}
z_{u}\left(  T,x\right)  dx.
\end{align*}
We obtain finally the following generalization of

\begin{theorem}
\label{MainTheorem} If $\left(  \underline{u},\underline{v}\right)  =\left(
u^{\ast},v^{\ast}\right)  $ is an optimal strategy then $\forall\left(
u,v\right)  \in V,$%
\begin{align*}
0  &  \leq J_{u}+J_{v}\\
&  =%
{\displaystyle\int\nolimits_{0}^{T}}
{\displaystyle\int\nolimits_{\Omega}}
C\left(  s,x\right)  u\left(  s,x\right)  dxds\\
&  +%
{\displaystyle\sum\nolimits_{i=0}^{k}}
{\displaystyle\int\nolimits_{\Omega}}
\left(  p_{v^{\ast}}\left(  \tau_{i}^{+},x\right)  -c_{i}\left(  x\right)
\right)  v_{i}\left(  x\right) \\
&  \qquad\qquad\qquad\times\theta_{u^{\ast},v^{\ast}}\left(  \tau
_{i},x\right)  dx\\
&  -%
{\displaystyle\int\nolimits_{0}^{T}}
{\displaystyle\int\nolimits_{\Omega}}
\sigma u\left(  s,x\right)  \alpha\left(  s,x\right)  p_{u^{\ast}}\left(
s,x\right)  \theta_{u^{\ast},v^{\ast}}\left(  s,x\right) \\
&  \qquad\qquad/\left(  1-\sigma u^{\ast}\left(  s,x\right)  \right)
^{2}dxds,
\end{align*}
where%
\begin{align*}
\mathcal{L} &\equiv
L^{\infty}\left(  \left[  0,T\right]
\times\Omega;%
\mathbb{R}
\right)  \times\left(  L^{\infty}\left(  \Omega;%
\mathbb{R}
\right)  \right)  ^{k+1};\\
V&=\left\{  \left(  u,v\right)  \in \mathcal{L}\,|\,\exists\varepsilon>0,
  \left(  u^{\ast}+\varepsilon u,v^{\ast}+\varepsilon v\right)  \in
\mathcal{L}\right\},
\end{align*}
and $p_{u^{\ast}}=p_{v^{\ast}}$is the solution to the problem $\left(
\ref{EqAdjp1PDE}\right)  -\left(  \ref{EqAdjp3PDE}\right)  .$
\end{theorem}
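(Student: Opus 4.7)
The plan is to mimic the proofs of Proposition~\ref{prop:JuJvOpt1} and Theorem~\ref{MainTheoremPulse}, combining the Gâteaux-derivative computation in the $u$-direction with the one already performed in the $v$-direction, and using the adjoint state $p_{u^{\ast}}=p_{v^{\ast}}$ defined by $(\ref{EqAdjp1PDE})$--$(\ref{EqAdjp3PDE})$ to eliminate $z_{u}$ and $z_{v}$ from $J_{u}+J_{v}$. Note first that the adjoint equation in the general model depends on $u^{\ast}$ through $\alpha/(1-\sigma u^{\ast})$ and on $v^{\ast}$ through the jump condition, so $p_{u^{\ast}}$ and $p_{v^{\ast}}$ coincide; existence and uniqueness of this adjoint follows by the argument of Theorem~\ref{ExistenceGlobalSolutionIPDE} applied backwards in time on each subinterval $]\tau_{i},\tau_{i+1}]$.

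The core computation is to evaluate
\[
I\equiv\sum_{i=0}^{k-1}\int_{\tau_{i}}^{\tau_{i+1}}\!\int_{\Omega}p_{u^{\ast}}\,\partial_{t}z_{u}\,dx\,ds
+\int_{\tau_{k}}^{T}\!\int_{\Omega}p_{u^{\ast}}\,\partial_{t}z_{u}\,dx\,ds
\]
in two different ways. First, I integrate by parts in time on each subinterval, using the jump relations $z_{u}(\tau_{i}^{+},x)=v_{i}^{\ast}(x)z_{u}(\tau_{i},x)$ from $(\ref{EqZu3PDE})$ and $p_{u^{\ast}}(\tau_{i},x)=v_{i}^{\ast}(x)p_{u^{\ast}}(\tau_{i}^{+},x)+c_{i}(x)(1-v_{i}^{\ast}(x))$ from $(\ref{EqAdjp1PDE})$--$(\ref{EqAdjp2PDE})$. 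Second, I substitute the expression $(\ref{EqZu1PDE})$ for $\partial_{t}z_{u}$ and apply Green's formula to the divergence term, using the no-flux boundary conditions $\langle A\nabla p_{u^{\ast}},n\rangle=0$ on $\partial\Omega$ from $(\ref{EqAdjp3PDE})$ and the analogous condition for $z_{u}$ inherited from $(\ref{ModelPulse3})$ by differentiation in $u$, which produces a symmetric bilinear term $\int_{\Omega}\langle A\nabla p_{u^{\ast}},\nabla z_{u}\rangle\,dx$ that cancels when the two expressions for $I$ are equated. The adjoint equation $\partial_{t}p_{u^{\ast}}=\alpha p_{u^{\ast}}/(1-\sigma u^{\ast})-\mathrm{div}(A\nabla p_{u^{\ast}})-1$ contributes the $\int z_{u}$ and $\alpha p_{u^{\ast}}z_{u}/(1-\sigma u^{\ast})$ terms, the second of which exactly cancels the first term on the right of $(\ref{EqZu1PDE})$, while the extra term $-\sigma\alpha u\,\theta_{u^{\ast},v^{\ast}}/(1-\sigma u^{\ast})$ in $(\ref{EqZu1PDE})$ produces the final sign-definite integrand
\[
-\int_{0}^{T}\!\int_{\Omega}\frac{\sigma u(s,x)\,\alpha(s,x)\,p_{u^{\ast}}(s,x)\,\theta_{u^{\ast},v^{\ast}}(s,x)}{(1-\sigma u^{\ast}(s,x))^{2}}\,dx\,ds
\]
that appears in the statement.

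After equating the two evaluations of $I$, I solve for $\int_{0}^{T}\!\int_{\Omega}z_{u}\,dx\,ds$ and substitute this expression, together with the $z_{u}(\tau_{i},x)$ and $z_{u}(T,x)$ contributions, into the formula for $J_{u}$ given just above the theorem. Adding $J_{v}$ from the corresponding computation in Theorem~\ref{MainTheoremPulse} (which supplies the $\sum\int_{\Omega}(p_{v^{\ast}}(\tau_{i}^{+},x)-c_{i}(x))v_{i}(x)\theta_{u^{\ast},v^{\ast}}(\tau_{i},x)\,dx$ term) gives the claimed identity for $J_{u}+J_{v}$. The inequality $J_{u}+J_{v}\geq 0$ is then just the first-order optimality condition: for $(u,v)\in V$ and $\varepsilon>0$ sufficiently small, $(u^{\ast}+\varepsilon u,v^{\ast}+\varepsilon v)$ is admissible, so $J(u^{\ast}+\varepsilon u,v^{\ast}+\varepsilon v)\geq J(u^{\ast},v^{\ast})$, and dividing by $\varepsilon$ and letting $\varepsilon\to 0^{+}$ yields the result.

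The main obstacle is bookkeeping the jump terms cleanly: the impulsive structure means the integration-by-parts identity produces telescoping boundary contributions at each $\tau_{i}$ and at $T$, and one must carefully use \emph{both} the $v^{\ast}$-dependent jump of $z_{u}$ \emph{and} the $v^{\ast}$-dependent jump of $p_{u^{\ast}}$ to see that the non-$v$-linear pieces cancel, leaving only the clean $(p_{v^{\ast}}(\tau_{i}^{+},\cdot)-c_{i})v_{i}\theta_{u^{\ast},v^{\ast}}(\tau_{i},\cdot)$ terms. The spatial (PDE) aspect itself is routine once assumptions \textbf{(H3)}--\textbf{(H8)} are in force, because the Green identity merely trades the divergence against a symmetric quadratic form that is identical whether one differentiates the primal or the adjoint, so it drops out of the equated identity without leaving any residual.
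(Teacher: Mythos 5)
Your proposal is correct and follows essentially the same route as the paper, whose own proof is only a one-line appeal to the arguments of Proposition~\ref{eq:OptStrategyAverage}, Theorem~\ref{MainTheoremPulse} and Proposition~\ref{prop:JuJvOpt1}: the double evaluation of $\sum_i\int_{\tau_i}^{\tau_{i+1}}\int_\Omega p_{u^{\ast}}\,\partial_t z_u\,dx\,ds$, the cancellation of the symmetric Green-formula term $\int_\Omega\langle A\nabla p_{u^{\ast}},\nabla z_u\rangle\,dx$, the telescoping of the jump terms via the adjoint transmission condition, and the final first-order optimality inequality are exactly the intended steps. One small caution: as printed, the source term in (\ref{EqZu1PDE}) carries $(1-\sigma\underline{u})^{-1}$ rather than $(1-\sigma\underline{u})^{-2}$, so to arrive at the exponent $2$ in the theorem's final integral you must use the corrected G\^{a}teaux derivative $-\sigma\alpha\, u\,\theta_{u^{\ast},v^{\ast}}/(1-\sigma u^{\ast})^{2}$, as your write-up implicitly does.
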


\begin{proof}
The proof follows the same lines as Proposition \ref{eq:OptStrategyAverage},
Theorem \ref{MainTheoremPulse}, and Theorem~\ref{prop:JuJvOpt1}, albeit the
calculations are more complicated.
\end{proof}

For $u_{1},u_{2}\in L^{\infty}\left(  \left[  0,T\right]  \times\Omega;%
\mathbb{R}
\right)  \supset L^{\infty}\left(  \left[  0,T\right]  \times\Omega;\left[
0,1\right]  \right)  $, define the inner product%
\[
\left\langle u_{1},u_{2}\right\rangle =%
{\displaystyle\int\nolimits_{0}^{T}}
{\displaystyle\int\nolimits_{\Omega}}
u_{1}\left(  s,x\right)  u_{2}\left(  s,x\right)  dxds,
\]
with associated norm $\left\Vert u\right\Vert _{4}=\left(
{\displaystyle\int\nolimits_{0}^{T}}
{\displaystyle\int\nolimits_{\Omega}}
u^{2}\left(  s,x\right)  dxds\right)  ^{1/2}$. Note that $\forall u\in
L^{\infty}\left(  \left[  0,T\right]  \times\Omega;%
\mathbb{R}
\right)  ,$ $\left\Vert u\right\Vert _{4}$ is less than $\sqrt{\left\vert
\Omega\right\vert T}\underset{t\in\left[  0,T\right]  }{ess\sup}\left\vert
u\left(  t,x\right)  \right\vert $; this allows us to find $u^{\ast}$ using
the topology given by $\left\Vert .\right\Vert _{4}$.

Since $J_{u}$ is a linear continuous operator acting on $u$, it follows there
is a unique $\overline{u}\in L^{\infty}\left(  \left[  0,T\right]
\times\Omega;%
\mathbb{R}
\right)  $ such that$\left\langle \overline{u},u\right\rangle =J_{u}$. Indeed,
from Theorem~\ref{MainTheorem} we have
\[
\overline{u}=C-\sigma\alpha p_{\underline{u}}\theta_{\underline{u}%
,\underline{v}}/\left(  1-\sigma\underline{u}\right)  ^{2}.
\]

\begin{proposition}
\label{prop:JuJvOpt13}If $\underline{v}=v^{\ast}$ is an optimal strategy then
$\forall x\in\Omega,v^{\ast}\left(  x\right)  \in\left\{  0,1\right\}  ^{k}$.
Moreover,
\[
v_{i}^{\ast}\left(  x\right)  =\left\{
\begin{array}
[c]{l}%
0, \text{ if }p_{v^{\ast}}\left(  \tau_{i}^{+},x\right)  >c_{i}\text{ and }\\
\qquad\left\Vert \theta_{u^{\ast},v^{\ast}}\left(  \tau_{i},.\right)  \right\Vert
_{L^{2}\left(  \Omega\right)  }\geq\sigma^{\ast}\left\vert \Omega\right\vert;
\\
1, \text{ otherwise},%
\end{array}
\right.
\]
and
\begin{equation}\label{eq:Optuconditionfull}
u^{\ast}(t,x)=\left\{
\begin{array}
[c]{l}%
0, \text{ if }C>\sigma\alpha p_{u^{\ast}}\theta_{u^{\ast},v^{\ast}}/\left(
1-\sigma\right); \\
1,\text{ if }C<\sigma\alpha p_{u^{\ast}}\theta_{u^{\ast},v^{\ast}}/\left(
1-\sigma\right),\\
\end{array}
\right.
\end{equation}
where $C$ is a function of $t$ and $\alpha, p_{u^{\ast}}$, and $\theta_{u^{\ast},v^{\ast}}$ are functions of $(t,x)$.
\end{proposition}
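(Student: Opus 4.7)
The plan is to combine the arguments from Proposition~\ref{eq:OptStrategyPulseFull} (for the bang-bang nature of $v^{\ast}$ in the spatial setting) and Proposition~\ref{prop:JuJvOpt12} (for the bang-bang nature of $u^{\ast}$), using the variational inequality provided by Theorem~\ref{MainTheorem}. Throughout, we would exploit that the inequality $J_{u}+J_{v}\ge 0$ holds for \emph{every} admissible direction $(u,v)\in V$, which allows pointwise (i.e.\ a.e.) conclusions to be drawn by a suitable choice of localized test perturbations.

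For the characterization of $v^{\ast}$, I would first set $u\equiv 0$ in the inequality of Theorem~\ref{MainTheorem} to isolate the pulse terms, obtaining
\[
\sum_{i=0}^{k}\int_{\Omega}\bigl(p_{v^{\ast}}(\tau_{i}^{+},x)-c_{i}(x)\bigr)v_{i}(x)\theta_{u^{\ast},v^{\ast}}(\tau_{i},x)\,dx\ge 0
\]
for all admissible $v$. By testing against perturbations $v_{i}$ supported on arbitrarily small measurable subsets of $\Omega$, the integral inequality localizes pointwise a.e. On the set where $p_{v^{\ast}}(\tau_{i}^{+},x)>c_{i}(x)$, admissibility ($v^{\ast}+\varepsilon v\in[0,1]^{k+1}$) forces $v_{i}(x)\ge 0$; combined with $\theta_{u^{\ast},v^{\ast}}(\tau_{i},x)>0$ at an active intervention time (guaranteed by the threshold condition on $\tau_{i}$), this forces $v_{i}^{\ast}(x)=0$. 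Symmetrically, $p_{v^{\ast}}(\tau_{i}^{+},x)<c_{i}(x)$ forces $v_{i}^{\ast}(x)=1$, and on the equality set I would fix $v_{i}^{\ast}(x)=1$ without loss of optimality, invoking the monotonicity argument already used in Proposition~\ref{eq:OptStrategyPulseFull}.

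For the characterization of $u^{\ast}$, I would localize the needle-variation argument of Proposition~\ref{prop:JuJvOpt12} in both time and space. For a Lebesgue point $(s,x_{0})\in[0,T]\times\Omega$ of the relevant integrands and small $\delta>0$, I would use the perturbation $u_{(\zeta,s,x_{0},\delta)}(t,x)=u^{\ast}(t,x)+\zeta\chi_{[s,s+\delta]}(t)\chi_{B(x_{0},\delta)}(x)$. Expanding $J(u_{(\zeta,s,x_{0},\delta)},v^{\ast})-J(u^{\ast},v^{\ast})$ to leading order in $\delta$ as in the proof of Proposition~\ref{prop:JuJvOpt12}, and applying the Lebesgue differentiation theorem, I reduce the optimality condition to requiring that the scalar function
\[
q(\eta)\equiv C(s,x_{0})\eta-\frac{\alpha(s,x_{0})p_{u^{\ast}}(s,x_{0})\theta_{u^{\ast},v^{\ast}}(s,x_{0})}{1-\sigma\eta}
\]
attain its minimum on $[0,1]$ at $\eta=u^{\ast}(s,x_{0})$. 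A direct computation shows $q''(\eta)<0$ on $[0,1]$, so the minimum occurs at an endpoint, and comparing $q(0)$ and $q(1)$ yields exactly condition~(\ref{eq:Optuconditionfull}).

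The main obstacle will be the rigorous passage from the integrated variational inequality to pointwise conditions in both space and time, particularly for the continuous control $u^{\ast}$. Because $p_{u^{\ast},v^{\ast}}$ and $\theta_{u^{\ast},v^{\ast}}$ both depend implicitly on the perturbation $u_{(\zeta,s,x_{0},\delta)}$, the naive $\delta$-expansion generates implicit derivative contributions; verifying that these vanish at first order in $\delta$ (after integrating against the needle test function) requires the same adjoint-equation bookkeeping used in the proofs of Proposition~\ref{eq:OptStrategyAverage} and Theorem~\ref{MainTheoremPulse}. The concavity of $q$ and the bang-bang dichotomy then follow essentially by direct calculation, making the remainder of the argument routine.
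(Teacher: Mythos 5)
Your proposal is correct and follows essentially the same route as the paper, whose entire proof of this proposition is a one-line reference to the argument of Proposition~\ref{eq:OptStrategyAverage2} (and, implicitly, Propositions~\ref{eq:OptStrategyPulseFull} and~\ref{prop:JuJvOpt12}); you have simply written out the spatial localization and needle-variation details that the authors leave implicit. The technical caveat you flag about first-order contributions from the implicit dependence of $p_{u^{\ast}}$ and $\theta_{u^{\ast},v^{\ast}}$ on the perturbation is real but is handled (to the same level of rigor) by the adjoint-equation bookkeeping already used in the paper's earlier proofs, so no new gap is introduced.
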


\begin{proof}
The proof is similar to that of Proposition
\ref{eq:OptStrategyAverage2}.
\end{proof}

\section{Model Simulation\label{SectionModSim}}

In this section we present simulations which verify that algorithms based on Propositions \ref{eq:OptStrategyAverage2} and \ref{eq:OptStrategyPulseFull} are effective in determing optimal pulse-only control strategies for the spatially-averaged and space-dependent models, respectively.

\subsection{Model discretization\label{SubsectionSimulPulseStrategy}}
First we briefly describe the implementation of the model.  The averaged model is a special case of the space-dependent model (on a $1 \times 1 \times 1$ grid), so it suffices to describe the space-dependent model. For the purposes of this description, we define 
\begin{align*}
\theta^{(0)} &\equiv \theta(t,x); \quad \theta^{(1)} \equiv \theta(t+h,x); \quad \theta^{(.5)} \equiv \theta(t+0.5h,x);\\
\alpha^{(.5)} &\equiv \alpha(t,x); \quad u^{(.5)} \equiv u(t,x); \quad A^{(.5)} \equiv A(t,x),
\end{align*}
where $h$ is the discrete time step used in the simulation. Using centered time-difference, we then have:
\begin{equation}\label{eq:disc1}
\frac{ \theta^{(1)} -  \theta^{(0)}}{h} \approx \alpha^{(.5)}(1 - \frac{ \theta^{(.5)}}{1 - \sigma u^{(.5)}} + \text{div} \left(A^{(0.5)} \nabla \theta^{(.5)}\right).
\end{equation}
We may also approximate:
\[ \theta^{(.5)} \approx \frac{\theta^{(0)} + \theta^{(1)}}{2},\]
which enables us to rewrite (\ref{eq:disc1} as:
\begin{equation}
 \theta^{(1)} \approx h\left( I - \frac{h}{2}M\right)^{-1} \left[ \alpha^{(.5)} +\left(I + \frac{h}{2}M \right) \theta^{(0)} \right],
\end{equation}
where $M$ represents the operator:
\begin{equation}
M \equiv  -\frac{\alpha^{(.5)}}{1 - \sigma u^{(.5)}}I + \text{div} \left(A^{(0.5)} \nabla\right).
\end{equation}
It remains to find a spatial discretization of $M$ that respects the boundary conditions. For simplicity we restricted ourselves to the case where $A$ is diagonal:  $A \equiv \text{diag}(A_1,A_2,A_3)$. 
 \begin{align*}
\text{div} \left(A^{(0.5)} \nabla \phi \right) 
&= \frac{\partial}{\partial x_1} \left(A_1^{(0.5)}\frac{\partial}{\partial x_1} \phi\right)
+ \frac{\partial}{\partial x_2} \left(A_2^{(0.5)}\frac{\partial}{\partial x_2} \phi\right)\\
&~~~~+ \frac{\partial}{\partial x_3} \left(A_3^{(0.5)}\frac{\partial}{\partial x_3} \phi\right)\\
\end{align*}
Letting the subscripts $i,j,k$ denote the grid point indices in the $x_1,x_2,$ and $x_3$ directions respectively, we 
may discretize as follows:
\begin{align*}
\left( \text{div} \left(A^{(0.5)} \nabla \phi \right) \right)_{i,j,k}
&= \frac{1}{ds^2} \Big( A_{1,(i+.5,j,k)}^{(0.5)} (\phi_{i+1,j,k} -\phi_{i,j,k})\\
& ~~~\quad  - A_{1,(i-.5,j,k)}^{(0.5)} (\phi_{i,j,k} -\phi_{i-1,j,k})\\
& ~~~\quad+  A_{2,(i,j+.5,k)}^{(0.5)} (\phi_{i,j+1,k} -\phi_{i,j,k})\\
& ~~~\quad  - A_{2,(i,j-.5,k)}^{(0.5)} (\phi_{i,j,k} -\phi_{i,j-1,k})\\
&~~~ \quad+  A_{3,(i,j,k+.5)}^{(0.5)} (\phi_{i,j,k+1} -\phi_{i,j,k})\\
&~~~ \quad  - A_{3,(i,j,k-.5)}^{(0.5)} (\phi_{i,j,k} -\phi_{i,j,k-1}) \Big).
\end{align*}
In these equations, $ds$ is the space discretization step and the notation for the discretized matrix entries means
\begin{equation*}
A_{m,(a,b,c)}^{(0.5)} = A_m(t+0.5h,[a-1,b-1,c-1]ds),~~~(j=1,2,3).
\end{equation*}
We use a $(N_1+1) \times (N_2+1) \times  (N_3+1)$ grid, corresponding to a spatial domain $\Omega \equiv [0,N_1ds] \times [0,N_2ds] \times [0, N_3ds]$.
The boundary condition $A(x,t) \nabla \phi(x,t)=0$ for $x \in \partial \Omega$ is implemented by requiring that $A_m(t,x)=0$ whenever $x \notin \Omega$. It may be verified that under these conditions, the following holds:
\begin{align}
\sum_{i=0}^{N_1} \sum_{j=0}^{N_2} \sum_{k=0}^{N_3} \text{div} \left(A^{(0.5)} \nabla \phi  \right)_{i,j,k} = 0,
\end{align}
in accordance with the divergence theorem.

\subsection{Simulation of spatially-averaged model and optimal pulse-only strategy}
We present first results from the spatially-averaged model, which are easier to display graphically. Following \cite{fotsa} we use an inhibition pressure of the form%
\begin{equation}
\label{EqInhibitionPressure}\alpha\left(  t\right)  =a\left(  t-b\right)
^{2}\left(  1-\cos\left(  2\pi t/c\right)  \right)  ,
\end{equation}
with $b, c \in\left[  0,1\right]  $. This function is shown in Figure~\ref{fig:inhib_press}; it reflects the seasonality of
empirically-based severity index models found in the literature
\cite{danneberger,dodd,duthie}.
\begin{figure}[th]
\begin{center}
  \includegraphics[width=0.5\textwidth]{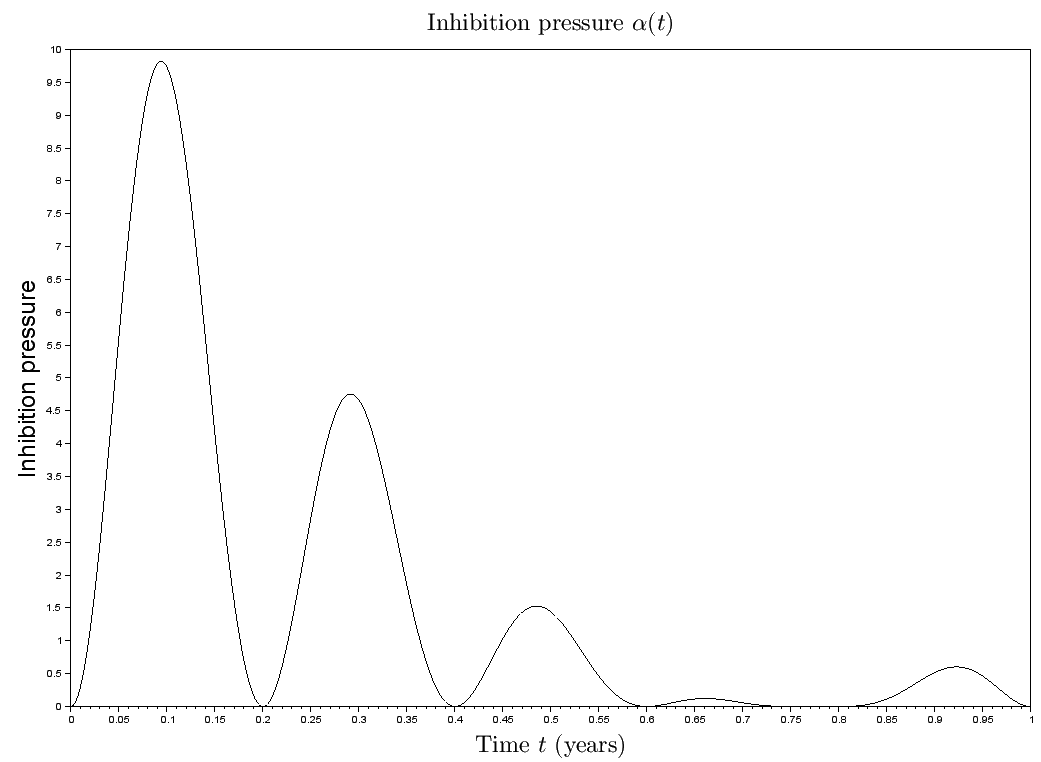}
  \caption{Inhibition pressure $\alpha(t)$}\label{fig:inhib_press}
\end{center}
\end{figure}

Parameters used in the simulation are summarized in Table
\ref{SimulationParamTable}. We considered only the case
where the cost of  pulse intervention is independent of time (that is, $c_{i}$ is
constant):  the
general features of the solution are still manifest in this special case.

\begin{center}
\begin{table}[th]%
\begin{tabular}
[c]{ccc}\hline\hline
Parameter & Significance & Value\\\hline
\multicolumn{1}{l}{$a $} & \multicolumn{1}{l}{Average amplitude
parameter in (\ref{EqInhibitionPressure})} & \multicolumn{1}{l}{$0.5\log\left(  10\right) $}\\\hline
\multicolumn{1}{l}{$b$} & \multicolumn{1}{l}{Time of maximal}
& \multicolumn{1}{l}{$0.75$}\\
\multicolumn{1}{l}{} & \multicolumn{1}{l}{inhibition pressure in (\ref{EqInhibitionPressure})} & \multicolumn{1}{l}{}\\\hline
\multicolumn{1}{l}{$c$} & \multicolumn{1}{l}{Period of oscillations in
(\ref{EqInhibitionPressure})} & \multicolumn{1}{l}{$0.2$}\\\hline
\multicolumn{1}{l}{$c_{i}$} & \multicolumn{1}{l}{Unit cost of pulse} &
\multicolumn{1}{l}{0.25, 0.4 \text{ or } 0.5}\\
\multicolumn{1}{l}{} & \multicolumn{1}{l}{intervention} & \multicolumn{1}{l}{}%
\\\hline
\multicolumn{1}{l}{$C_f$} & \multicolumn{1}{l}{Unit cost at harvest} &\multicolumn{1}{l}{0, 0.25, \text{ or } 0.5}
\\\hline
\multicolumn{1}{l}{$1-\sigma$} & \multicolumn{1}{l}{Inhibition rate attractor
if $u=1$} & \multicolumn{1}{l}{$0.7$}\\\hline
\multicolumn{1}{l}{$\sigma^{\ast}$} & \multicolumn{1}{l}{Pulse intervention
threshold} & \multicolumn{1}{l}{$0$}\\\hline
\multicolumn{1}{l}{$\Theta_{0}$} & \multicolumn{1}{l}{Average initial inhibition rate}
& \multicolumn{1}{l}{ $0.4$}\\\hline
\multicolumn{1}{l}{$\Delta\tau$} & \multicolumn{1}{l}{Time interval between} &
\multicolumn{1}{l}{$1/52$ (one week)}\\
\multicolumn{1}{l}{} & \multicolumn{1}{l}{pulse interventions} &
\multicolumn{1}{l}{}\\\hline
\multicolumn{1}{l}{$T$} & \multicolumn{1}{l}{Length of simulation} &
\multicolumn{1}{l}{$1$ (year)}%
\end{tabular}
\caption{Simulation parameters for spatially-averaged model}%
\label{SimulationParamTable}%
\end{table}
\end{center}

Simulation results for the averaged model are displayed in Figure~\ref{fig:pulse_strategy}, \ref{fig:chem_control}, and  \ref{fig:pulse_Cf}. In Figure~\ref{fig:pulse_strategy}, the optimal  pulse-only control strategy is shown for a range of pulse control costs. (The optimal pulse control strategy was calculated based on  Theorem \ref{eq:OptStrategyAverage2}, with $p_v$ calculated as  the numerical solution to the adjoint problem (\ref{EqAdjp1})--(\ref{EqAdjp2}).  As expected, as intervention cost increases the amount of intervention decreases for the optimal solution. It is interesting to note in Figure~\ref{fig:pulse_strategy} that the sets of intervention times corresponding to successively larger intervention costs form a sequence of nested subsets. However, we have not proven that this is true in general.
\begin{figure}[th]
\begin{center}
  \includegraphics[width=0.50\textwidth]{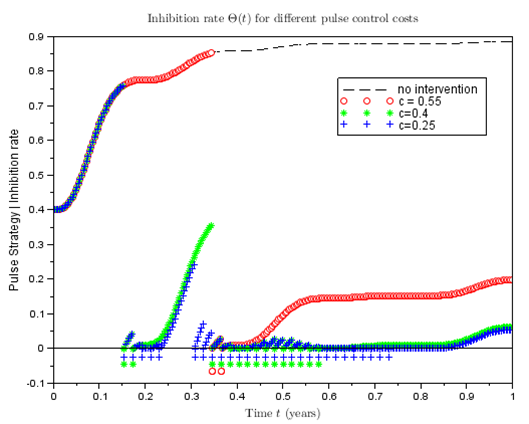}
  \caption{Inhibition rate with optimal pulse-only control for different unit pulse control costs. The markings below the time axis indicate times at which pulse control is applied for the corresponding control cost. The unit cost at harvest ($C_f$) was set equal to 0.}\label{fig:pulse_strategy}
\end{center}
\end{figure}
Figure~\ref{fig:chem_control} shows optimal pulse control for the same unit pulse control costs, but with a constant chemical control $u=1$ and inhibition rate attractor $1 - \sigma = 0.7$ .  The maximum inhibition rate is reduced by the chemical control, and the number of pulse interventions is also reduced compared to Figure~\ref{fig:pulse_strategy}.
\begin{figure}[th]
\begin{center}
 \includegraphics[width=0.50\textwidth]{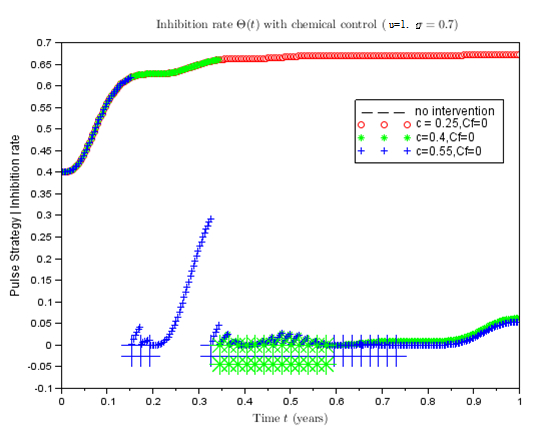}
  \caption{Inhibition rate with optimal  pulse control for different unit pulse control costs, given that constant chemical control is also used $(u=1, \sigma = 0.3)$. The markings below the time axis indicate times at which pulse control is applied for the corresponding control cost. The unit cost at harvest ($C_f$) was set equal to 0.}\label{fig:chem_control}
\end{center}
\end{figure}
Figure~\ref{fig:pulse_Cf} illustrates the effect of unit cost at harvest ($C_f$) on the optimal strategy. In this case, increases in final control cost lead to increases in the application of pulse control that effectively reduce the final cost. As in Figure~\ref{fig:pulse_strategy}, the pulse control applications for different final control costs form a sequence of nested subsets. Note that if $C_f > c$ then $C_f$ has no further influence on the optimal strategy, because then it is always preferable to use pulse control immediately before harvest rather than to incur the harvest cost $C_f$.
\begin{figure}[th]
\begin{center}
  \includegraphics[width=0.50\textwidth]{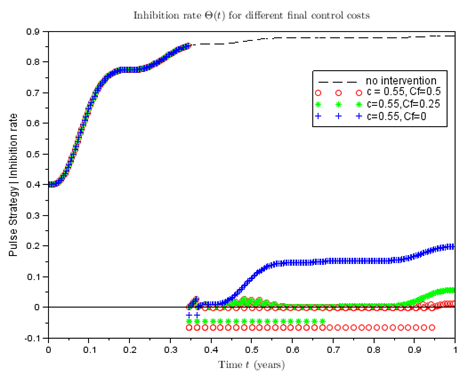}
  \caption{Inhibition rate with optimal pulse-only control for different unit final control costs. The markings below the time axis indicate times at which pulse control is applied for the corresponding final control cost.}\label{fig:pulse_Cf}
\end{center}
\end{figure}

\subsection{Simulation of main  model  and optimal pulse-only strategy}
We also simulated the main model, using the parameters specified in
Tables~\ref{SimulationParamTable} and \ref{General SimulationParamTable}. 
We first considered the case where   the inhibition pressure is independent of location,
while the initial inhibition rate varies with location according to the functional form
\begin{equation}
q_1 \left( \sin\left(\frac{\pi x_1}{N_1ds}\right) \sin\left(\frac{\pi x_2}{N_2ds}\right) \sin\left(\frac{\pi x_3}{N_3ds}\right) \right)^{1/3} + q_2.
\end{equation}
This function indicates an initial infection that is concentrated towards the center of $\Omega$.  The constants $q_1$ and $q_2$ were chosen such that the average initial inhibition rate agrees with Table~\ref{SimulationParamTable}. 

\begin{center}
\begin{table}[th]%
\begin{tabular}
[c]{ccc}\hline\hline
Parameter & Significance & Value\\\hline
\multicolumn{1}{l}{$A$} & \multicolumn{1}{l}{Diffusion matrix} &
\multicolumn{1}{l}{$I, 10I$}\\\hline
\multicolumn{1}{l}{$\Omega$} & \multicolumn{1}{l}{grid size ($N_1\times N_2\times
N_3$)} & \multicolumn{1}{l}{$10\times 10 \times 3$}\\\hline
\multicolumn{1}{l}{$ds$} & \multicolumn{1}{l}{spatial grid spacing} & \multicolumn{1}{l}{$1$}\\\hline
\multicolumn{1}{l}{} & \multicolumn{1}{l}{} & \multicolumn{1}{l}{}%
\end{tabular}
\caption{Additional simulation parameters for general model}%
\label{General SimulationParamTable}%
\end{table}
\end{center}

Figures~\ref{fig:MainControl1} and~\ref{fig:MainControl2} show the evolution under optimal pulse strategy for all grid points in a $10 \times 10 \times 3$ grid. The systems represented by the two figures have different diffusion matrices ($A=I$ for Figure~\ref{fig:MainControl1}, $A = 10I$ for Figure~\ref{fig:MainControl2}). The figures confirm that when the inhibition rate and costs are spatially indepent,  then the optimal strategy is also spatially independent:  pulse interventions are always applied to the entire region, and never to a proper subregion. 
Furthermore, the optimal strategy does not depend on the initial inhibition rate distribution, or on the diffusion matrix $A$. All of these characteristics may be rigorously proven using the fact that the optimal strategy is derived from the solution of the adjoint system (\ref{EqAdjp1PDE})--(\ref{EqAdjp3PDE}), which is independent of
$\theta_0$ and is also independent of  $x$ as long as $C_f$ and $c$ are independent of $x$. Since the optimal strategy is space-independent, we find that in this case the spatial average of the general model agrees exactly with the averaged model. This illustrates the practical usefulness of the averaged model, in the case where inhibition pressure and costs are independent of spatial location.
\begin{figure}[th]
\begin{center}
  \includegraphics[width=0.50\textwidth]{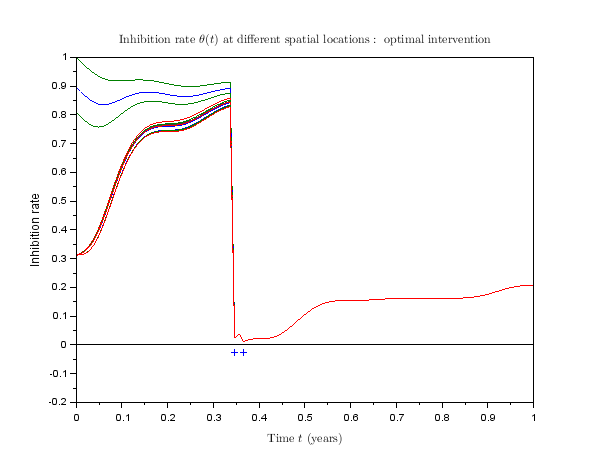}
  \caption{Inhibition rate with optimal pulse control as a function of time for different grid points on a $10\times 10 \times 3$ grid, for a system with $A  = I, c = 0.55, C_f=0,$ and other parameters as given in Tables~\ref{SimulationParamTable} and \ref{General SimulationParamTable}. The space-averaged version of this system is associated with the $c=0.55$ curve in Figure~\ref{fig:pulse_strategy}.  Note the optimal pulse interventions (indicated by the markings below the time axis) are the same in both cases. }\label{fig:MainControl1}
\end{center}
\end{figure}
\begin{figure}[th]
\begin{center}
  \includegraphics[width=0.50\textwidth]{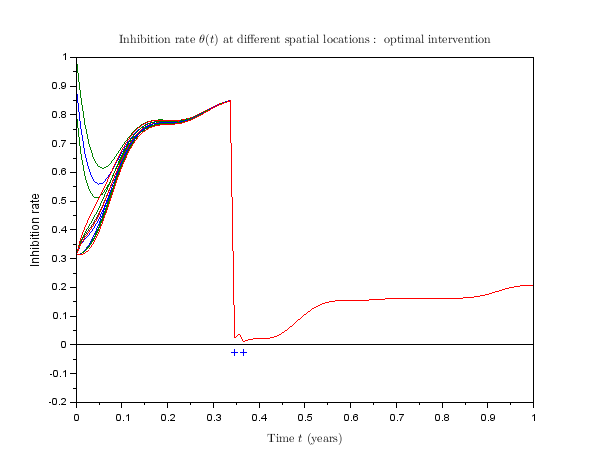}
  \caption{Inhibition rate with optimal pulse control  for different grid points, for a system as in Figure~\ref{fig:MainControl1} except with diffusion matrix $A=10I$.}\label{fig:MainControl2}
\end{center}
\end{figure}

Next, we consider the case where the inhibition pressure depends on spatial location. 
We did this by choosing  $a(x)$ in  (\ref{EqInhibitionPressure}) for each grid point independently according to a uniform random distribution, then rescaling so that the average value of $a(x)$ agrees with Table~\ref{SimulationParamTable}. Other parameters from Table~\ref{SimulationParamTable} remain unchanged, and  are constant with respect to the space variable. In particular, the initial
conditions $\rho(x)$ were taken as constant with respect to the space variable. Figure \ref{fig:MainControl3} shows the inhibition rate as a function of time for all grid points of a $10 \times 10 \times 3$ grid when optimal pulse control is applied. Unlike the cases shown in Figures~\ref{fig:MainControl1} and ~\ref{fig:MainControl2}, in this case the optimal strategy depends on spatial position. 
\begin{figure}[th]
\begin{center}
  \includegraphics[width=0.50\textwidth]{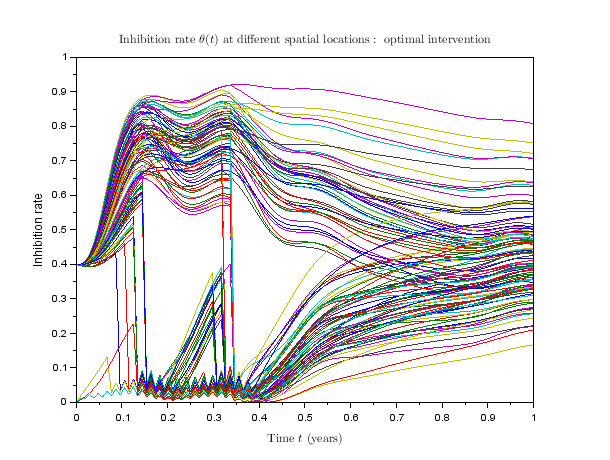}
  \caption{Inhibition rate with optimal pulse control for a system as in Figure~\ref{fig:MainControl1} except with constant initial inhibition pressure, and with space-dependent inhibition rate.}\label{fig:MainControl3}
\end{center}
\end{figure}

%
%
%
\section{Discussion\label{Discussion}}
This paper significantly extends the results of \cite{fotsa} in several respects. 
First, the model has been generalized by imposing weaker smoothness conditions on the parameters.
Parameters are only required to be measurable and
essentially locally bounded on $%
\mathbb{R}
^{3+1}$. Under these conditions, we have showed existence and uniqueness of a solution which takes values in $\left[  0,1\right]  $, thus establishing the model to be well formulated
mathematically and epidemiologically.
Second, we have included the possibility of a pulse control strategy, along with the continuous (chemical) control strategy studied in \cite{fotsa}.  The added pulse strategy
$v=\left(  v_{i}\right)  _{i\in%
\mathbb{N}
}$ represents the cultivational practices such as pruning old infected twigs,
removing mummified fruits \cite{boisson,mouen072,mouen03,mouen08,wharton}.
Third, we have verified an explicit algorithm for finding the optimal pulse-only strategy in the case where $\sigma^{\ast} = 0$.
Numerical simulations  for both the averaged version of
the model and the full version confirm the practical applicability of this algorithm. As explained in subsection
\ref{SubsectionIDEModel}, the averaged version of the model faithfully represents the
average behavior on the bounded domain $\Omega$, when inhibition pressure and intervention costs are space-independent.


Practical computation of control strategies that optimize the use of  both pulse and chemical contr are the subject of ongoing research.

\begin{center}
\textbf{ACKNOWLEDGEMENTS}
\end{center}

The author thank Doctor Martial NDEFFO MBAH from the Yale School of Medecine
and the University of Cambridge for all his useful remarks concerning the
presentation of the paper. They also thank the African Mathematics Millennium Science Initiative (AMMSI) for the financial support they gave to the first author in the year 2014.

\end{document}